\DeclareFontFamily{U}{mathc}{}
\DeclareFontShape{U}{mathc}{m}{it}{<->s*[1.03] mathc10}{}
\DeclareMathAlphabet{\mathcalow}{U}{mathc}{m}{it}
\newcommand{\nodes}{\mathcal{V}}
\newcommand{\minus}{\scalebox{0.75}[1.0]{$-$}}
\newcommand{\mequals}{\scalebox{0.75}[1.0]{$=$}}
\newcommand{\mplus}{\scalebox{0.75}[0.75]{$+$}}
\newcommand{\mtop}{\scalebox{0.75}[0.75]{$\top$}}
\newcommand{\m}{\ensuremath{\mathcalow{m}}}
\newtheorem{problem}{Problem}[section]
\tikzstyle{block} = [draw, fill=blue!20, rectangle, 
\tikzstyle{sum} = [draw, fill=blue!20, circle, node distance=1cm]
\tikzstyle{input} = [coordinate]
\tikzstyle{output} = [coordinate]
\tikzstyle{pinstyle} = [pin edge={to-,thin,black}]
\begin{document}

\ifbool{HSLreport}{

{
\ititle{An Adaptive Hybrid Control Algorithm for Sender-Receiver Clock Synchronization}
\title{\bf An Adaptive Hybrid Control Algorithm for Sender-Receiver Clock Synchronization}
\iauthor{
  Marcello Guarro \\
  {\normalsize mguarro@ucsc.edu} \\
    Ricardo Sanfelice \\
  {\normalsize ricardo@ucsc.edu}}
\idate{\today{}} 
\iyear{2020}
\irefnr{01}
\makeititle}

\maketitle

\begin{abstract}
This paper presents an innovative hybrid systems approach to the sender-receiver synchronization of timers. Via the hybrid systems framework, we unite the traditional sender-receiver algorithm for clock synchronization with an online, adaptive strategy to achieve synchronization of the clock rates to exponentially synchronize a pair of clocks connected over a network. Following the conventions of the algorithm, clock measurements of the nodes are given at periodic time instants, and each node uses these measurements to achieve synchronization. For this purpose, we introduce a hybrid system model of a network with continuous and impulsive dynamics that captures the sender-receiver algorithm as a state-feedback controller to synchronize the network clocks. Moreover, we provide sufficient design conditions that ensure attractivity of the synchronization set.
\end{abstract}
}{
\begin{frontmatter}



\title{An Adaptive Hybrid Control Algorithm for Sender-Receiver Clock Synchronization} 


\author[First]{Marcello Guarro}  
\author[First]{Ricardo Sanfelice}

\affiliation[First]{organization={Department of Electrical and Computer Engineering University of California},
            addressline={1156 High Street}, 
            city={Santa Cruz},
            postcode={95060}, 
            state={CA},
            country={USA}}


\begin{abstract}                
This paper presents an innovative hybrid systems approach to the sender-receiver synchronization of timers. Via the hybrid systems framework, we unite the traditional sender-receiver algorithm for clock synchronization with an online, adaptive strategy to achieve synchronization of the clock rates to exponentially synchronize a pair of clocks connected over a network. Following the conventions of the algorithm, clock measurements of the nodes are given at periodic time instants, and each node uses these measurements to achieve synchronization. For this purpose, we introduce a hybrid system model of a network with continuous and impulsive dynamics that captures the sender-receiver algorithm as a state-feedback controller to synchronize the network clocks. We provide sufficient design conditions that ensure the synchronization set is globally attractive for the system model. Moreover, we present a model extension that considers the application of the algorithm in a multi-agent setting. Numerical examples are also presented to show feasibility of the algorithm in both nominal and perturbed system settings. \end{abstract}

\tnotetext[mytitlenote]{This research partially supported by NSF Grants no. ECS-1710621 and CNS-1544396, by AFOSR Grants no. FA9550-16-1-0015, FA9550-19-1-0053, and Grant no. FA9550-19-1-0169, and by CITRIS and the Banatao Institute at the University of California.}

\begin{highlights}
\item Introduction of a hybrid systems approach to model the sender-receiver synchronization of timers via the hybrid systems framework. Sufficient design conditions are provided to ensure a synchronization set of interest is globally attractive for the system model. 
\item An extension to the model that considers the application of the algorithm in a multi-agent setting is also presented with numerical examples to demonstrate its feasibility.
\end{highlights}

\begin{keyword}
Hybrid and switched systems modeling \sep Control over networks \sep Sensor networks
\end{keyword}

\end{frontmatter}}

\section{Introduction}

\subsection{Motivation}

The consensus on a common timescale in a distributed system is an essential requisite for any coordinated system whose algorithm or task depends on the event-ordering of its input. Moreover, distributed algorithms that interact with a dynamical system have the additional requirement of having to know the precise moment an event occurs to ensure desired system function, stability, and safety. This has become increasingly apparent in the deployment of large and complex cyber-physical systems such as autonomous vehicles \cite{samii2018level}, aerospace avionics, and distributed manufacturing robotic systems \cite{eidson2006measurement}.

It has been well established in the literature on networked control systems, that the lack of consensus on a shared timescale among distributed agents can result in performance issues that adversely affect system stability, see \cite{graham2004clock} and \cite{zhang2001ncs}. In particular, time delays due to sampling, transmission, and computation result in the loss of concurrency between the dynamic process events of the plant and computational process events of the system planner or controller (see \cite{nilsson1998real}). To address this issue, system events are time-stamped via clocks local to the sensors and actuators that interact with the physical systems as noted in \cite{nilsson1998real} and the survey papers \cite{zhang2001ncs}, \cite{hespanha2007survey}. However, the success of this strategy relies on the existence of a common timescale among the components and agents in the distributed system. To ensure consensus on a common timescale, the system is coupled with a clock synchronization subsystem that periodically synchronizes the clocks to ensure their relative error is within an acceptable tolerance that is sufficient for desired system performance. 

The design of such a subsystem, however, is nontrivial as network delays, nonuniform clock drifts, and the lack of an absolute time reference, present a unique set of challenges to the clock synchronization problem as noted in \cite{wu2010clock}, \cite{sundararaman2005clock}, and \cite{simeone2008distributed}. Network delays, such as those due to computation or transmission processes, are often the key challenge in synchronization due to the inherent difficulty of their estimation. Of particular concern, however, is the influence of the clock synchronization subsystem on ensuring the stability and robustness guarantees of the larger system as observed in \cite{graham2004clock} and \cite{lavalle2007time}. In \cite{guarro2018state}, we demonstrate a sufficient finite-time convergence condition on the clock synchronization subsystem in a time-stamp aware observer to ensure asymptotic convergence of the estimation error.

In this paper, we are interested in designing a clock synchronization algorithm using two-way communication protocols with tractable design conditions and performance metrics to meet the current and projected demands of distributed system design. In particular, we are interested in the design of algorithm that addresses the following challenges:

\begin{itemize}
\item \emph{Communication delays:} the physical nature and operation of communication networks introduces a variety of stochastic and deterministic delays that adversely affect accurate synchronization of time. As previously noted,  two primary sources of communication delay are those that relate to transmission and computation (also known as residence delay).
\item \emph{Clocks with different rates of change:} typical clocks deployed in a distributed system are inherently imprecise devices whose frequency or clock rate is subjected to noise disturbances due to physical, environmental, and manufacturing constraints.
\item \emph{Performance guarantees:} noting the influence of delays on networked control systems and the time-stamping approach used to alleviate them, guarantees on the synchronization performance are necessary when the control system is subjected to fast sampling periods, as noted in \cite{nakamura2008synchronization}.
\end{itemize}

\subsection{Related Work}

Among the number of existing algorithms for clock synchronization, sender-receiver (or two-way) based synchronization algorithm underpin several of the most popular clock synchronization protocols including Network Time Protocol (NTP) in \cite{mills1991internet} , Precision Time Protocol (PTP) in \cite{5} , and the Timing-sync Protocol for Sensor Networks (TPSN) in \cite{ganeriwal2003timing}. 

The core algorithm, upon which these protocols are based, relies on the existence of a known reference that is either injected to the system or provided by an elected agent in the distributed system; synchronization is then achieved through a series of chronologically ordered and time stamped two-way message exchanges between each synchronizing node and the designated reference. With sufficient information from the exchanged messages and underlying assumptions on the clocks and communication delays, the relative differences in the clock rates and offset can be estimated and applied as a correction to the clock of the synchronizing node, see \cite{freris2010fundamental}. However, while the difference in the output can be determined and implemented online, the relative clock rate is estimated through offline filtering techniques (see \cite{mills1991internet}) or least-squares estimation (see \cite{wu2010clock}). 

Each of the aforementioned protocols, however, utilize different strategies in regards to the availability of the algorithm and the layer of implementation. For instance, the Network Time Protocol is an ``always-on" implementation that runs entirely as a system process in the software layer. This level of implementation subjects the protocol to frequent computational delays due to the execution of system processes that have higher priority. These delays contribute to timing inaccuracy that renders NTP unfit for networked control systems with fast sampling periods, see \cite{nakamura2008synchronization}.

Improving upon NTP to address its concerns and meet the demands of time-sensitive distributed system, the Precision Time Protocol utilizes a hybrid implementation of software and hardware to improve the synchronization accuracy. The protocol utilizes time-stamping of the exchanged messages at the hardware layer to minimize the computational delays associated with software time-stamps on the exchanged messages.

The Timing-sync Protocol for Sensor Networks seeks to address the scalability issues posed by the NTP and PTP protocols by allowing the algorithm to work on an intermittent schedule. The intermittent strategy enable its use in low-energy sensor networks with limited computational capacity at the cost of synchronization accuracy.

Finally, while there are more recent consensus-based synchronization algorithms such as \cite{7}, \cite{carli2014TAC}, \cite{bolognani2015randomized}, \cite{garone2015clock}, and the authors own \cite{guarro2021hyntp}, we would like to emphasize that the scope of this work is specific to synchronization algorithms that operate via a sender-receiver-based messaging protocol. Most consensus-based algorithms assume asymmetric communication protocols with one-way messaging between any two agents.

\subsection{Contributions and Organization} 
 
In this paper, we present a hybrid systems approach to sender-receiver synchronization with an, online, adaptive method to synchronize the clock rates. We show that our algorithm exponentially synchronizes a pair of clocks connected over a network while preserving the messaging protocols and network dynamics of traditional sender-receiver algorithms. 

Our proposed solution provides a Lyapunov-based convergence analysis to a set in which the clocks are synchronized with sufficient conditions ensuring their synchronization.  In particular the main contributions of this paper are given as follows: 
\begin{itemize}
\item In Section \ref{sec:model}, a hybrid system model of the sender-receiver synchronization algorithm using the framework proposed in \cite{4} is presented. The proposed model captures the continuous dynamics of the clock states and the hybrid dynamics of the networking protocol by which the timing messages are exchanged for a pair of system nodes to achieve synchronization.
\item In Section \ref{sec:prop_of_hs}, we show, through the satisfaction of some basic conditions on the system model, that the algorithm is finite-time attractive to a forward invariant set of interest that represents the correct initialization of the algorithm. 
\item In Section \ref{sec:main}, we provide sufficient conditions on the algorithm parameters to show asymptotic attractivity of the hybrid system to a set of interest representing synchronization of the clocks from the initialization set. Furthermore, we characterize the bound for solution trajectories to the systems in terms of  parameters that can be used for algorithm design.
\item  In Section \ref{sec:multi_agent}, we present a multi-agent extension of the proposed model to cover the case of synchronizing the nodes on an $n$-node network. The feasibility of this multi-agent model is validated with a numerical example of the simulated system.
\end{itemize} 

Unlike the existing algorithms of NTP, PTP, and TPSN, we emphasize to the reader that previous analyses on sender-receiver synchronization have only provided results to their feasibility and that the literature lacks formal results that characterize its performance in a dynamical system setting. 

In addition to the highlighted sections, this paper is organized as follows: Section \ref{sec:prelim_sendRec} introduces the sender-receiver algorithm as presented in the literature. Section \ref{sec:motivation} outlines the motivation for this paper. Section \ref{sec:prelim} presents some preliminary material on hybrid systems. Section \ref{sec:model} formally introduces the problem under consideration and the hybrid model that solves it. Section \ref{sec:main} details the main results, while Section \ref{sec:num} provides numerical examples. \ifbool{conf}{Due to space constraints, the proofs of the results along with other details have been omitted and will be published elsewhere.}{ 

We inform the reader that this work is an extension of our preliminary conference paper \cite{guarro2020adaptive}.  In particular, we include the full proofs of the main results in addition to incremental results and their associated proofs that were originally omitted from the preliminary paper.  Moreover, the proofs and material in Section \ref{sec:main} are new. In addition, new examples for the nominal and multi-agent setting are presented in Section \ref{sec:num}.}

\textit{Notation}: In this paper the following notation and definitions will be used. The set of natural numbers including zero, i.e., $\{0,1,2,\ldots\}$ is denoted by $\mathbb{N}$. The set of natural numbers is denoted as $\mathbb{N}_{> 0}$, i.e., $\mathbb{N}_{> 0} = \{1,2,\ldots\}$. The set of real numbers is denoted as $\reals$. The set of non-negative real numbers is denoted by $\mathbb{R}_{\geq 0}$, i.e., $\mathbb{R}_{\geq 0} = [0, \infty )$. The $n$-dimensional Euclidean space is denoted $\mathbb{R}^n$. Given topological spaces $A$ and $B$, $F: A \rightrightarrows B$ denotes a set-valued map from $A$ to $B$. For a matrix $A \in \mathbb{R}^{n \times m}$, $A^\top$ denotes the transpose of $A$. Given a vector $x \in \mathbb{R}^n$, $|x|$ denotes the Euclidean norm. Given two vectors $x \in \mathbb{R}^n$ and $y \in \mathbb{R}^m$, $(x,y) = [ x^\top \hspace{2mm} y^\top \hspace{1mm} ]^\top$. For two symmetric matrices $A \in \mathbb{R}^{n \times m}$ and $B \in \mathbb{R}^{n \times m}$, $A \succ B$ means that $A - B$ is positive definite, conversely $A \prec B$ means that $A - B$ is negative definite. Given a function $f : \mathbb{R}^n \rightarrow \mathbb{R}^m$, the range of $f$ is given by $\mbox{rge } f := \{ y \hspace{1mm} | \hspace{1mm} \exists \hspace{1mm} x \mbox{ with } y = f(x) \}$.

\ifbool{conf}{\section{Preliminaries on the Sender-Receiver Algorithm} \label{sec:prelim_sendRec}

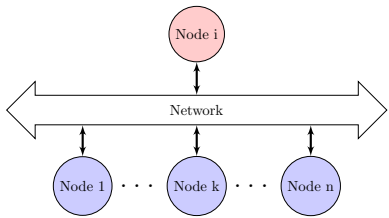
\begin{figure}
\centering
\begin{adjustbox}{max width=0.4\textwidth}
\begin{tikzpicture}[auto, node distance=0.5cm,>=latex']
    \node (node_i) [circle, draw, align=left, fill=red!20]{Node i};
	\node (cloud) [double arrow, draw, align=center, below of = node_i, node distance=2cm, minimum width=1.5cm, minimum height=10cm]{Network};	
	\node (node_k) [circle, draw, align=left, fill=blue!20, below of=cloud, node distance=2cm]{Node k};    
    \node (node_one) [circle, draw, align=left, fill=blue!20, left of=node_k, node distance=3cm]{Node 1};
    \node (node_n) [circle, draw, align=left, fill=blue!20, right of=node_k, node distance=3cm]{Node n};
    
    \node [output, above of=node_one,node distance=1.6cm] (output_one) {};
    \node [output, above of=node_n,node distance=1.6cm] (output_n) {};
    \path (node_one) -- node[auto=false]{\Huge \ldots} (node_k);
    \path (node_k) -- node[auto=false]{\Huge \ldots} (node_n);
    \draw [ultra thick,<->] (node_i) to (cloud);
    \draw [ultra thick,<->] (node_one) to (output_one);
    \draw [ultra thick,<->] (node_n) to (output_n);
    \draw [ultra thick,<->] (node_k) to (cloud);
\end{tikzpicture}
\end{adjustbox}
\caption{General architecture of the system under consideration.}
\label{fig:sys_arch}
\end{figure}

In a network of $n$ nodes, consider nodes $i$ and $k$ in a sender-receiver hierarchy where Node $i$ is a designated reference or parent agent of a synchronizing child agent Node $k$, see Figure \ref{fig:sys_arch}. Each node has an attached internal clock $\tau_{i}, \tau_{k} \in \reals$ whose dynamics are given by
\begin{equation} \label{eqn:clocks_dyn}
\begin{aligned}
& \dot{\tau}_{i} = a_{i} \\
& \dot{\tau}_{k} = a_{k}
\end{aligned}
\end{equation}
\noindent
where $a_{i}$,$a_{k} \in \reals$ denote the respective clock drift or skew. At times $t_j$ for $j \in \mathbb{N}$ (with $t_0 = 0$), nodes $i$ and $k$ exchange timing measurements with embedded timestamps 
\begin{equation} \label{eqn:timestamps}
\begin{aligned}
T_j^i := \tau_i(t_j) \\
T_j^k := \tau_k(t_j)
\end{aligned}
\end{equation}
\noindent 
which, integrating (2), are equal to $$\tau_i(t_j) = a_i t_j + \tau_i(0)$$ $$\tau_k(t_j) = a_k t_j + \tau_k(0)$$ respectively. The goal is to then synchronize the internal clock of Node $k$ to that of Node $i$ using the exchanged timing measurements.

\ifbool{conf}{
For a sequence of time instants $\{t_j\}_{j=1}^{\infty}$ that is assumed to be strictly increasing and unbounded, at each $t_j$ the standard sender-receiver synchronization algorithm as described in the literature (see \cite{wu2010clock}, \cite{freris2009model}, and \cite{eidson2006measurement}) is given as follows:

\begin{enumerate}[label={(P\arabic*})]
\item \label{itm:one} At time $t_j$, Node $i$ broadcasts a synchronization message with its time $T_j^i$ to Node $k$.
\item \label{itm:two} At time $t_{j+1}$, Node $k$ receives the synchronization message and records its time of arrival $T_{j+1}^k$
\item \label{itm:three} At time $t_{j+2}$, Node $k$ sends a response message with timestamp $T_{j+2}^k$
\item \label{itm:four} At time $t_{j+3}$, Node $i$ receives the response message from Node $k$ and records its time of arrival $T_{j+3}^i$
\item \label{itm:five} At time $t_{j+4}$, Node $i$ sends a response receipt message with timestamp $T_{j+4}^i$
\item \label{itm:six} At time $t_{j+5}$, Node $k$ receives the response message from Node $i$ and records its time of arrival $T_{j+5}^k$ and then updates its clock to synchronize with the clock of Node $i$ using the collected timestamps $T_{j}^i$, $T_{j+1}^k$, $T_{j+2}^k$, $T_{j+3}^i$, and $T_{j+4}^i$. 
\end{enumerate}
\noindent
Moreover, as done in the literature, it is assumed that the time elapsed between each time instant is given by 
\begin{equation} \label{eqn:t_bounds}
t_{j+1} - t_j = \begin{cases}
d \hspace{5mm} \forall j \in \{ 2i + 1 : i \in \mathbb{N} \}, j>0 \\
c \hspace{5mm} \forall j \in \{ 2i : i \in \mathbb{N} \}, j>0 
\end{cases}
\end{equation}
\noindent
where $0 < c \leq d$. The constant $c$ defines the residence or response time delay while $d$ defines the propagation delay of the message transmission.
}{
Before introducing the mechanics of the sender-receiver algorithm, we refer the reader to a visual model of the algorithm in Figure \ref{fig:pairwise} as a reference. By assuming the sequence of time instants $\{t_j\}_{j=1}^{\infty}$ is strictly increasing and unbounded, the sender-receiver synchronization algorithm as described in the literature (see \cite{wu2010clock}, \cite{freris2009model}, and \cite{eidson2006measurement}) is given as follows:

\begin{enumerate}[label={(P\arabic*})]
\item \label{itm:one} At time $t_j$, Node $i$ broadcasts a synchronization message with its time $$T_j^i = a_i t_j + \tau_i(0)$$ to Node $k$.
\item \label{itm:two} At time $t_{j+1}$, Node $k$ receives the synchronization message and records its time of arrival $$T_{j+1}^k = a_k t_{j+1} + \tau_k(0)$$
\item \label{itm:three} At time $t_{j+2}$, Node $k$ sends a response message with timestamp $$T_{j+2}^k = a_k t_{j+2} + \tau_k(0)$$
\item \label{itm:four} At time $t_{j+3}$, Node $i$ receives the response message from Node $k$ and records its time of arrival $$T_{j+3}^i = a_i t_{j+3} + \tau_i(0)$$
\item \label{itm:five} At time $t_{j+4}$, Node $i$ sends a response receipt message with timestamp $$T_{j+4}^i = a_i t_{j+4} + \tau_i(0)$$
\item \label{itm:six} At time $t_{j+5}$, Node $k$ receives the response message from Node $i$ and records its time of arrival $$T_{j+5}^k = a_k t_{j+5} + \tau_k(0)$$ and then updates its clock to synchronize with the clock of Node $i$ using the collected timestamps $T_{j}^i$, $T_{j+1}^k$, $T_{j+2}^k$, $T_{j+3}^i$, and $T_{j+4}^i$. 
\end{enumerate}
\noindent
Moreover, as done in the literature, it is assumed that the time elapsed between each time instant is governed by 
\begin{equation} \label{eqn:t_bounds}
t_{j+1} - t_j = \begin{cases}
d \hspace{5mm} \forall j \in \{ 2i + 1 : i \in \mathbb{N} \}, j>0 \\
c \hspace{5mm} \forall j \in \{ 2i : i \in \mathbb{N} \}, j>0 
\end{cases}
\end{equation}
\noindent
where $0 < c \leq d$. The constant $c$ defines the delay associated with the residence or response time associated with message turnaround while $d$ defines the propagation delay associated with message transmission.}

Most pairwise synchronization protocols such as the Network Time Protocol (NTP), Precision Time Protocol (PTP, IEEE 1588), and the Timing-sync Protocol for Sensor Networks (TPSN) assume that the propagation delay in the message transmission from parent to child and child to parent is symmetric. If the propagation delay between the two nodes is asymmetric it introduces an error to the calculated offset correction that cannot be accounted for, see \cite{freris2010fundamental}. Thus, the propagation delay and residence time are assumed to be symmetric.

\begin{figure}
\centering
\includegraphics[width=0.4\textwidth]{./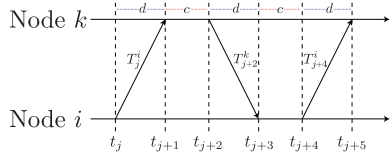}
\caption{\label{fig:pairwise} Diagram illustrating the message exchange between Nodes $i$ and $k$ for the synchronization algorithm.}
\end{figure}


With the available timestamps, at times $t_{j+5}$, the relative offset $\tilde{o} := \tau_i(0) - \tau_k(0)$ is calculated via
\begin{equation} \label{eqn:offset_law1} 
K_{\tilde{o}} = \frac{1}{2} \Big ( (T_{j+3}^i {-} T_{j+2}^k) {-} (T_{j+1}^k {-} T_{j}^i) \Big ) 
\end{equation}
\noindent
by making the appropriate substitutions one has
\begin{align*}
K_{\tilde{o}} & = \frac{1}{2} \Big ( \big ( (a_i t_{j+3} + \tau_i(0)) {-} (a_k t_{j+2} + \tau_k(0)) \big ) \\ 
& \hspace{10mm} - \big ((a_k t_{j+1} + \tau_k(0) ) {-} (a_i t_{j} + \tau_i(0)) \big ) \Big ) \\
& = \frac{1}{2} \Big ( \big ( a_i t_{j+3} {-} a_k t_{j+2} + \tilde{o} \big ) {-} \big ( a_k t_{j+1} {-} a_i t_{j} - \tilde{o} \big ) \Big )
\end{align*}
\noindent
Rearranging terms gives,
\begin{equation} \label{eqn:twoway2} 
\begin{aligned} 
K_{\tilde{o}} & = \tilde{o} + \frac{1}{2} \Big ( \big ( a_i t_{j+3} - a_k t_{j+2} \big ) {-} \big ( a_k t_{j+1} - a_i t_{j} \big ) \Big )
\end{aligned}
\end{equation}
\noindent
If the clock drifts are synchronized, i.e., $a_k = a_i$, then 
\begin{align*} 
K_{\tilde{o}} & = \tilde{o} + \frac{1}{2} \Big ( \big ( a_i(t_{j+3} {-} t_{j+2}) \big ) {-} \big ( a_i(t_{j+1} {-} t_{j}) \big ) \Big )
\end{align*}
Then, by noting the bounds on the time elapsed between time instants $t_j$, as given in (\ref{eqn:t_bounds}), one has 
\begin{equation} \label{eqn:delay_sym}
t_{j+1} - t_{j} = t_{j+3} - t_{j+2} = d
\end{equation} 
\noindent
Making the appropriate substitutions in (\ref{eqn:twoway2}) gives
\begin{align*}
K_{\tilde{o}} & = \tilde{o} + \frac{1}{2} \big ( a_i d  -  a_i d \big ) = \tilde{o}
\end{align*}
\noindent 
which is then applied to the clock state of Node $k$ at times $t_{j+5}$. To demonstrate how this solves the synchronization problem, consider the error between the clocks of nodes $i$ and $k$ at $t_{j+5}$,
\begin{equation*} 
\begin{aligned}
\tau_{i} (t_{j+5}) - \tau_{k}(t_{j+5}) & = \tau_{i}(t_{j+5}) - (\tau_{k}(t_{j+5}) - K_{\tilde{o}}) \\
& = \big ( a_i t_{j+5} {+} \tau_{i}(0) \big ) \\
& \hspace{5mm} -  \big ( a_k t_{j+5} {+} \tau_{k}(0)  {\minus} (\tau_i(0) {\minus} \tau_k(0)) \big ) \\
& = a_i t_{j+5} - a_k t_{j+5} \\
& = 0
\end{aligned}
\end{equation*} 
Here $\tau_{k}(t_{j+5})$ is replaced by $\tau_{k}(t_{j+5}) - K_{\tilde{o}}$ where $K_{\tilde{o}}$ is defined in (\ref{eqn:twoway2}). Thus, the clocks at nodes $i$ and $k$ synchronize for the case where the clock drifts are already assumed to be synchronized.

\section{Motivation for An Adaptive Clock Synchronization Algorithm} \label{sec:motivation}

Now, consider the following system data $a_i = 1$, $a_k = 0.8$ with $c = d = 0.5$ and the given sender-receiver algorithm with only the offset correction $K_{\tilde{o}}$ being applied. After, simulating the algorithm, Figure \ref{fig:ex1} shows the plots of the behavior in the error of clocks and the clock rates. As depicted in the figure, the algorithm continually applies the offset correction but due to the mismatch in the clock rates, the error in the clocks fails to converge to zero. This is further evidenced analytically when noting that a mismatch in the clock rates in equation (\ref{eqn:twoway2}) yields an error on the offset $\tilde{o}$ in  (\ref{eqn:offset_law1}).

\begin{figure}
\centering
\includegraphics[width=0.4\textwidth]{./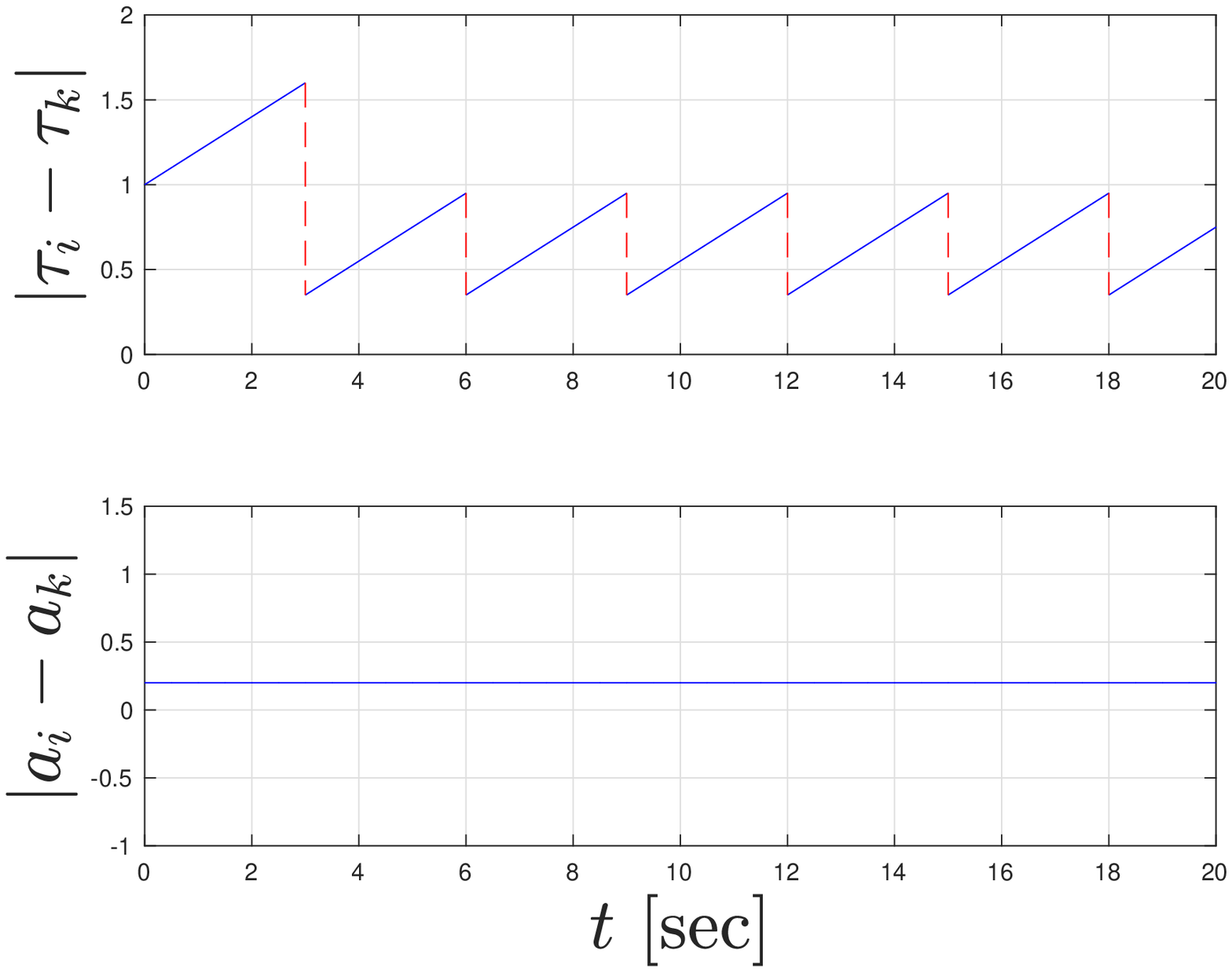}
\caption{\label{fig:ex1} The evolution of the error in the clocks and clock rates of Nodes $i$ and $k$ when the algorithm only applies the offset correction $K_{\tilde{o}}$.}
\end{figure}

Though various strategies exist to mitigate the effects of the error from the mismatched clock rates, the choice of strategy is often left to the system designer, see \cite{eidson2006measurement}. Moreover, these methods are often complicated to implement and too expensive for low-cost applications such as sensor networks. In fact, protocols such as TPSN, designed specifically for low-cost sensor networks, do not provide provisions to correct for the clock rate error, see \cite{ganeriwal2003timing}. Finally, the authors are not aware of any proposed sender-receiver algorithm that provides convergence guarantees for both offset and clock rate correction.

}{

\section{Motivation for An Adaptive Clock Synchronization Algorithm} \label{sec:motivation}

\subsection{Preliminaries on the Sender-Receiver Algorithm} \label{sec:prelim_sendRec}

\begin{figure}
\centering
\begin{adjustbox}{max width=0.4\textwidth}
\begin{tikzpicture}[auto, node distance=0.5cm,>=latex']
    \node (node_i) [circle, draw, align=left, fill=red!20]{Node i};
	\node (cloud) [double arrow, draw, align=center, below of = node_i, node distance=2cm, minimum width=1.5cm, minimum height=10cm]{Network};	
	\node (node_k) [circle, draw, align=left, fill=blue!20, below of=cloud, node distance=2cm]{Node k};    
    \node (node_one) [circle, draw, align=left, fill=blue!20, left of=node_k, node distance=3cm]{Node 1};
    \node (node_n) [circle, draw, align=left, fill=blue!20, right of=node_k, node distance=3cm]{Node n};
    
    \node [output, above of=node_one,node distance=1.6cm] (output_one) {};
    \node [output, above of=node_n,node distance=1.6cm] (output_n) {};
    \path (node_one) -- node[auto=false]{\Huge \ldots} (node_k);
    \path (node_k) -- node[auto=false]{\Huge \ldots} (node_n);
    \draw [ultra thick,<->] (node_i) to (cloud);
    \draw [ultra thick,<->] (node_one) to (output_one);
    \draw [ultra thick,<->] (node_n) to (output_n);
    \draw [ultra thick,<->] (node_k) to (cloud);
\end{tikzpicture}
\end{adjustbox}
\caption{General architecture of the system under consideration.}
\label{fig:sys_arch}
\end{figure}

In a network of $n$ nodes, consider nodes $i$ and $k$ in a sender-receiver hierarchy where Node $i$ is a designated reference or parent agent of a synchronizing child agent Node $k$, see Figure \ref{fig:sys_arch}. Each node has an attached internal clock $\tau_{i}, \tau_{k} \in \reals$ whose dynamics are given by
\begin{equation} \label{eqn:clocks_dyn}
\begin{aligned}
& \dot{\tau}_{i} = a_{i} \\
& \dot{\tau}_{k} = a_{k}
\end{aligned}
\end{equation}
\noindent
where $a_{i}$,$a_{k} \in \reals$ denote the respective clock rates.\footnote{ In this paper, we use the term clock rate to explicitly denote the slope of the given linear affine model of a clock. Other terms for this notion include clock drift or clock skew.}  At times $t_j$ for $j \in \mathbb{N}$ (with $t_0 = 0$), nodes $i$ and $k$ exchange timing measurements with embedded timestamps 
\begin{equation} \label{eqn:timestamps}
\begin{aligned}
T_j^i := \tau_i(t_j) \\
T_j^k := \tau_k(t_j)
\end{aligned}
\end{equation}
\noindent 
which, integrating (\ref{eqn:clocks_dyn}), are equal to $$\tau_i(t_j) = a_i t_j + \tau_i(0)$$ $$\tau_k(t_j) = a_k t_j + \tau_k(0)$$ respectively. Furthermore, $\tau_i(0)$ and $\tau_k(0)$ represent the clock offset from the initial reference time $t = 0$. The goal is to then synchronize the internal clock of Node $k$ to that of Node $i$ using the exchanged timing measurements given in (\ref{eqn:timestamps}).

\ifbool{conf}{
For a sequence of time instants $\{t_j\}_{j=1}^{\infty}$ that is assumed to be strictly increasing and unbounded, at each $t_j$ the standard sender-receiver synchronization algorithm as described in the literature (see \cite{wu2010clock}, \cite{freris2009model}, and \cite{eidson2006measurement}) is given as follows:

\begin{enumerate}[label={(P\arabic*})]
\item \label{itm:one} At time $t_j$, Node $i$ broadcasts a synchronization message with its time $T_j^i$ to Node $k$.
\item \label{itm:two} At time $t_{j+1}$, Node $k$ receives the synchronization message and records its time of arrival $T_{j+1}^k$
\item \label{itm:three} At time $t_{j+2}$, Node $k$ sends a response message with timestamp $T_{j+2}^k$
\item \label{itm:four} At time $t_{j+3}$, Node $i$ receives the response message from Node $k$ and records its time of arrival $T_{j+3}^i$
\item \label{itm:five} At time $t_{j+4}$, Node $i$ sends a response receipt message with timestamp $T_{j+4}^i$
\item \label{itm:six} At time $t_{j+5}$, Node $k$ receives the response message from Node $i$ and records its time of arrival $T_{j+5}^k$ and then updates its clock to synchronize with the clock of Node $i$ using the collected timestamps $T_{j}^i$, $T_{j+1}^k$, $T_{j+2}^k$, $T_{j+3}^i$, and $T_{j+4}^i$. 
\end{enumerate}
\noindent
Moreover, as done in the literature, it is assumed that the time elapsed between each time instant is given by 
\begin{equation} \label{eqn:t_bounds}
t_{j+1} - t_j = \begin{cases}
d \hspace{5mm} \forall j \in \{ 2i + 1 : i \in \mathbb{N} \}, j>0 \\
c \hspace{5mm} \forall j \in \{ 2i : i \in \mathbb{N} \}, j>0 
\end{cases}
\end{equation}
\noindent
where $0 < c \leq d$. The constant $c$ defines the residence or response time delay while $d$ defines the propagation delay of the message transmission. Figure \ref{fig:pairwise} gives a visual representation of the exchange of timestamps between Nodes $i$ and $k$ against reference time $t$. Note that the propagation delay from Node $i$ to Node $k$ and vice versa is assumed to symmetric. Moreover, it is also assumed that the delay due to residence time is the same across all nodes.\footnote{Most pairwise synchronization protocols such as the Network Time Protocol (NTP), Precision Time Protocol (PTP, IEEE 1588), and the Timing-sync Protocol for Sensor Networks (TPSN) assume that the propagation delay in the message transmission from parent to child and child to parent is symmetric. If the propagation delay between the two nodes is asymmetric it introduces an error to the calculated offset correction that cannot be accounted for, see \cite{freris2010fundamental}.} 
}{
Before introducing the mechanics of the sender-receiver algorithm, we refer the reader to a visual model of the algorithm in Figure \ref{fig:pairwise} as a reference. By assuming the sequence of time instants $\{t_j\}_{j=1}^{\infty}$ is strictly increasing and unbounded, the sender-receiver synchronization algorithm as described in the literature (see \cite{wu2010clock}, \cite{freris2009model}, and \cite{eidson2006measurement}) is given as follows:

\begin{enumerate}[label={(P\arabic*})]
\item \label{itm:one} At time $t_j$, Node $i$ broadcasts a synchronization message with its local time $$T_j^i = a_i t_j + \tau_i(0)$$ to Node $k$.
\item \label{itm:two} At time $t_{j+1}$, Node $k$ receives the synchronization message and records its local time of arrival, $T_{j+1}^k$, given in local time at  $$T_{j+1}^k = a_k t_{j+1} + \tau_k(0)$$
\item \label{itm:three} At time $t_{j+2}$, Node $k$ sends a response message with timestamp $$T_{j+2}^k = a_k t_{j+2} + \tau_k(0)$$
\item \label{itm:four} At time $t_{j+3}$, Node $i$ receives the response message from Node $k$ and records its time of arrival $$T_{j+3}^i = a_i t_{j+3} + \tau_i(0)$$
\item \label{itm:five} At time $t_{j+4}$, Node $i$ sends a response receipt message with timestamp $$T_{j+4}^i = a_i t_{j+4} + \tau_i(0)$$
\item \label{itm:six} At time $t_{j+5}$, Node $k$ receives the response message from Node $i$ and records its time of arrival $$T_{j+5}^k = a_k t_{j+5} + \tau_k(0)$$ and then updates its clock to synchronize with the clock of Node $i$ using the collected timestamps $T_{j}^i$, $T_{j+1}^k$, $T_{j+2}^k$, $T_{j+3}^i$, and $T_{j+4}^i$. 
\end{enumerate}
\noindent
Moreover, as done in the literature (see \cite{freris2009model} and \cite{simeone2008distributed}), it is assumed that the time elapsed between each time instant is governed by 
\begin{equation} \label{eqn:t_bounds}
t_{j+1} - t_j = \begin{cases}
d \hspace{5mm} \forall j \in \{ 2i + 1 : i \in \mathbb{N} \}, j>0 \\
c \hspace{5mm} \forall j \in \{ 2i : i \in \mathbb{N} \}, j>0 
\end{cases}
\end{equation}
\noindent
where $0 < c \leq d$. The constant $c$ defines the delay associated with the residence or response time associated with message turnaround while $d$ defines the propagation delay associated with message transmission. Figure \ref{fig:pairwise} gives a visual representation of the exchange of timestamps between Nodes $i$ and $k$ against reference time $t$. Note that the propagation delay from Node $i$ to Node $k$ and vice versa is assumed to symmetric. Moreover, it is also assumed that the delay due to residence time is the same across all nodes.\footnote{Most pairwise synchronization protocols such as the Network Time Protocol (NTP), Precision Time Protocol (PTP, IEEE 1588), and the Timing-sync Protocol for Sensor Networks (TPSN) assume that the propagation delay in the message transmission from parent to child and child to parent is symmetric. If the propagation delay between the two nodes is asymmetric it introduces an error to the calculated offset correction that cannot be accounted for, see \cite{freris2010fundamental}.} }

\begin{figure}
\centering
\includegraphics[width=0.4\textwidth]{./Figures/pairwise_v2.eps}
\caption{\label{fig:pairwise} Diagram illustrating the message exchange between Nodes $i$ and $k$ for the synchronization algorithm.}
\end{figure}

With the available timestamps, at times $t_{j+5}$, we can calculate the relative offset $\tilde{o} := \tau_i(0) - \tau_k(0)$ as follows, by first rearranging the terms in the timestamps given in \ref{itm:one}-\ref{itm:six} one has
\begin{align*}
\tau_i(0) & = T_j^i - a_i t_j \\
\tau_k(0) & = T_{j+1}^k - a_k t_{j+1} \\
\tau_k(0) & = T_{j+2}^k - a_k t_{j+2} \\
\tau_i(0) & = T_{j+3}^i - a_i t_{j+3} \\
\tau_i(0) & = T_{j+4}^i - a_i t_{j+4} \\
\tau_k(0) & = T_{j+5}^k - a_k t_{j+5}
\end{align*}
\noindent
then we have the following expressions for the offset
\begin{align*} 
\tilde{o} & = \tau_i(0) - \tau_k(0) = T_j^i - a_i t_j - T_{j+1}^k + a_k t_{j+1} \\
\tilde{o} & = \tau_i(0) - \tau_k(0) = T_{j+3}^i - a_i t_{j+3} - T_{j+2}^k + a_k t_{j+2} \\
\tilde{o} & = \tau_i(0) - \tau_k(0) = T_{j+4}^i - a_i t_{j+4} - T_{j+5}^k + a_k t_{j+5} 
\end{align*}
rearranging terms one has
\begin{equation} \label{eqn:twoway2} 
\begin{aligned}
T_j^i  - T_{j+1}^k & = a_i t_j - a_k t_{j+1} + \tilde{o} \\
T_{j+3}^i - T_{j+2}^k & = a_i t_{j+3} - a_k t_{j+2} + \tilde{o} \\
T_{j+4}^i - T_{j+5}^k & = a_i t_{j+4} - a_k t_{j+5} + \tilde{o} 
\end{aligned}
\end{equation}
\noindent
Now, if the clock drifts are synchronized, i.e., $a_k = a_i$, we have
\begin{equation} \label{eqn:timestamp_eqn}
\begin{aligned}
T_j^i  - T_{j+1}^k & = a_i (t_j - t_{j+1}) + \tilde{o} \\
T_{j+3}^i - T_{j+2}^k & = a_i (t_{j+3} - t_{j+2}) + \tilde{o} \\
T_{j+4}^i - T_{j+5}^k & = a_i (t_{j+4} - t_{j+5}) + \tilde{o} 
\end{aligned}
\end{equation} 
\noindent
then by noting the bounds on the time elapsed between time instants $t_j$, as given in (\ref{eqn:t_bounds}), one has 
\begin{equation} \label{eqn:delay_sym}
t_{j+1} - t_{j} = t_{j+3} - t_{j+2} =  t_{j+5} - t_{j+4} = d \quad \forall j \in \{ 2i + 1 : i \in \mathbb{N} \}, j>0
\end{equation} 
then by making the appropriate substitutions in (\ref{eqn:timestamp_eqn}) we have
\begin{equation} \label{eqn:timestamp_eqn2}
\begin{aligned}
T_j^i  - T_{j+1}^k & = - a_i d + \tilde{o} \\
T_{j+3}^i - T_{j+2}^k & = a_i d + \tilde{o} \\
T_{j+4}^i - T_{j+5}^k & = - a_i d + \tilde{o} 
\end{aligned}
\end{equation}
Since the clock rates $a_i = a_k$ and the quantity of the propagation delay $d$ are currently unknowns to the system, we are left with a linear system of equations to solve for the offset, i.e., 
\begin{equation} \label{eqn:offset_law1} 
\tilde{o} = \frac{1}{2} \Big ( (T_j^i  - T_{j+1}^k) + (T_{j+3}^i - T_{j+2}^k) \Big )
\end{equation}

To demonstrate how this solves the synchronization problem, consider the error between the clocks of nodes $i$ and $k$ at $t_{j+5}$,
\begin{equation*} 
\begin{aligned}
e_{ik}(t_{j+5}) = \tau_{i} (t_{j+5}) - \tau_{k}(t_{j+5})
\end{aligned}
\end{equation*}
\noindent
at time $t_{j+5}$ node $k$ applies the offset correction $K_{\tilde{o}} = \tilde{o}$ as follows
\begin{equation*} 
\begin{aligned}
e_{ik}(t_{j+5}) & = \tau_{i}(t_{j+5}) - (\tau_{k}(t_{j+5}) - K_{\tilde{o}}) \\
& = \big ( a_i t_{j+5} {+} \tau_{i}(0) \big ) -  \big ( a_k t_{j+5} {+} \tau_{k}(0)  {\minus} (\tau_i(0) {\minus} \tau_k(0)) \big ) \\
& = a_i t_{j+5} - a_k t_{j+5} \\
& = 0
\end{aligned}
\end{equation*} 
\noindent
Thus, the clocks at nodes $i$ and $k$ synchronize for the case where the clock rates $a_i$ and $a_k$ are already assumed to be synchronized.

\subsection{The Key Issue: Clock synchronization in the presence of mismatched clock rates.}

With the mechanics of the sender-receiver algorithm defined, we will now outline the motivation of this paper by demonstrating the issues that arise with the algorithm and how our proposed solution addresses them.

Now, consider the following system data $a_i = 1$, $a_k = 0.8$ with $c = d = 0.5$ and the given sender-receiver algorithm with only the offset correction $K_{\tilde{o}}$ being applied. Simulating the algorithm, Figure \ref{fig:ex1} shows the plots of the behavior in the error of clocks and the clock rates. As depicted in the figure, the algorithm continually applies the offset correction but due to the mismatch in the clock rates, the error in the clocks fails to converge to zero. This is further evidence analytically when noting that if the clock rates are not synchronized in equation (\ref{eqn:twoway2}), the formula for the offset calculation in (\ref{eqn:offset_law1}) will yield an error on the true offset $\tilde{o}$.

\begin{figure}
\centering
\includegraphics[width=0.5\textwidth]{./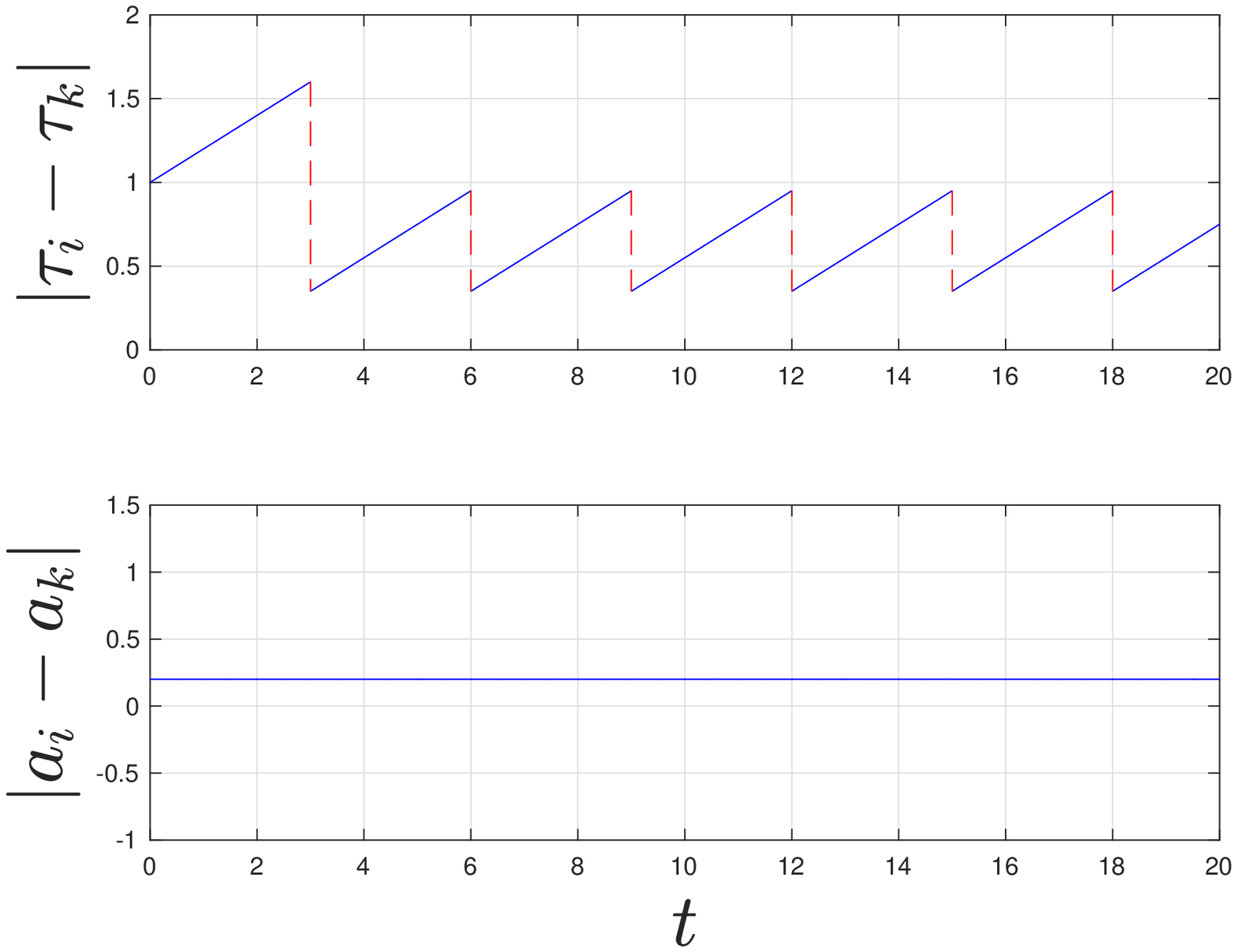}
\caption{\label{fig:ex1} The evolution of the error in the clocks and error in the clock rates of Nodes $i$ and $k$ when the algorithm only applies the offset correction $K_{\tilde{o}}$.}
\end{figure}
 
To mitigate the effects of the error, protocols such as NTP and IEEE 1588 utilize a variety of bespoke methods to minimize the error in clock rates including but not limited to, control of variable frequency hardware oscillators, pulse addition and deletion of the counted pulses at the hardware oscillator, and an error register to track the deviation of the error, see \cite{mills1991internet} and \cite{eidson2006measurement}. These methods, while suitable for industrial-grade equipment, are often expensive solutions for low-cost applications such as sensor networks. In fact, protocols such as TPSN, designed specifically for low-cost sensor networks, do not provide provisions to correct for the clock rate error, see \cite{ganeriwal2003timing}.}


%

\ifbool{conf}{\section{Proposed Algorithm}
}{
\subsection{Problem Formulation and Proposed Algorithm} \label{sec:proposed_algo}

The  problem to solve consists of synchronizing  the internal clock of Node $k$ to that of Node $i$.  More precisely, the goal is to  design a hybrid algorithm  that is based on exchanging timestamps and guarantees   that the clock  variable  $\tau_k$ and the clock rate $a_k$ of Node $k$ are driven to synchronization with $\tau_i$ and $a_i$ of the reference Node $i$, respectively.  Moreover, our goal is to  provide tractable design conditions that  ensure attractivity of  a set of interest.  This problem is formally stated as follows:

\begin{problem} \label{prob:1}
Given two nodes in a sender-receiver hierarchy with clocks having dynamics as in (\ref{eqn:clocks_dyn}) with timestamps $T_j^i$, $T_j^k$ and parameters $c$ and $d$, design a hybrid algorithm such that each trajectory $t \mapsto (\tau_{i}(t), \tau_{k}(t))$ satisfies  the clock synchronization property  $$\lim_{t \to\infty} | \tau_{i}(t) - \tau_{k}(t) | = 0$$ and  the rate synchronization property  $$\lim_{t \to\infty} | \dot{\tau}_{i}(t) - \dot{\tau}_{k}(t) | = 0$$
\end{problem}}

Given the inability of the sender-receiver algorithm to synchronize the clocks, we propose a modification to the algorithm that incorporates an adaptive strategy to synchronize the clock rates. Consider the control law for the synchronization of the clock rate for Node $k$
\begin{equation} \label{eqn:skew_law1}
K_{a} = \mu  (  T_{j+4}^i - T_{j}^i -  T_{j+5}^k -  T_{j+1}^k ) 
\end{equation}
\noindent
with $\mu > 0$ being a controllable parameter. Making the necessary substitutions one has
\begin{equation}
\begin{aligned}
K_{a} & = \mu \Big ( \big (a_i t_{j+4} + \tau_i(0) \big ) {-} \big  (a_i t_{j} + \tau_i(0) \big ) \\ 
& \hspace{1cm} - \big  (a_k t_{j+5} + \tau_k(0)) {-} (a_k t_{j+1} + \tau_k(0) \big ) \Big ) \\ 
& = \mu \big ( a_i (2c + 2d) - a_k (2c + 2d) \big ) \\
& = \mu (2c + 2d) \big ( a_i - a_k \big ) \\
\end{aligned}
\end{equation}
\noindent
The correction $K_{a}$ can then be applied to the clock dynamics of Node $k$ at times $t_{j+5}$ as follows:
\begin{equation}
\begin{aligned}
a_k^+ & = a_k + K_{a} =  a_k + \mu (2c + 2d) \big ( a_i - a_k \big )
\end{aligned}
\end{equation}
\noindent
Observe that this strategy operates under the existing assumptions of the sender-receiver algorithm (symmetric propagation delays and residence times) and does not rely on any additional information that is not already available via the exchanged timing messages. Moreover, since it exploits the integrator dynamics of the system, the computation costs to calculate $K_{a}$ are minimal. In this next example, we demonstrate the proposed strategy under the same scenario of mismatched skews between Nodes $i$ and $k$.

To illustrate, the capabilities of the algorithm outlined above, consider the same system data as in Section \ref{sec:motivation}, namely, $a_i = 1$, $a_k = 0.8$ with $c = d = 0.5$ and the given sender-receiver algorithm now with both the offset correction $K_{\tilde{o}}$ and clock rate correction $K_a$ being applied. In Figure \ref{fig:ex2}, two sets of error plots are presented for two different simulations. Figure \ref{fig:ex2a} gives plots of the errors for the case where the $\mu$ is chosen using information on $c$ and $d$ following our forthcoming design conditions while Figure \ref{fig:ex2b} provide the error plots for the case where $\mu$ is chosen arbitrarily. In the case of the ideal $\mu$, the error in the clocks and clock rate converge to zero whereas in the case of the arbitrarily chosen $\mu$, the error fails to converge. This suggests that a sufficient condition to appropriately design $\mu$ is necessary to ensure convergence of the error.

\begin{figure}
\centering
\subfigure[\label{fig:ex2a}]{\includegraphics[width=0.4\textwidth]{./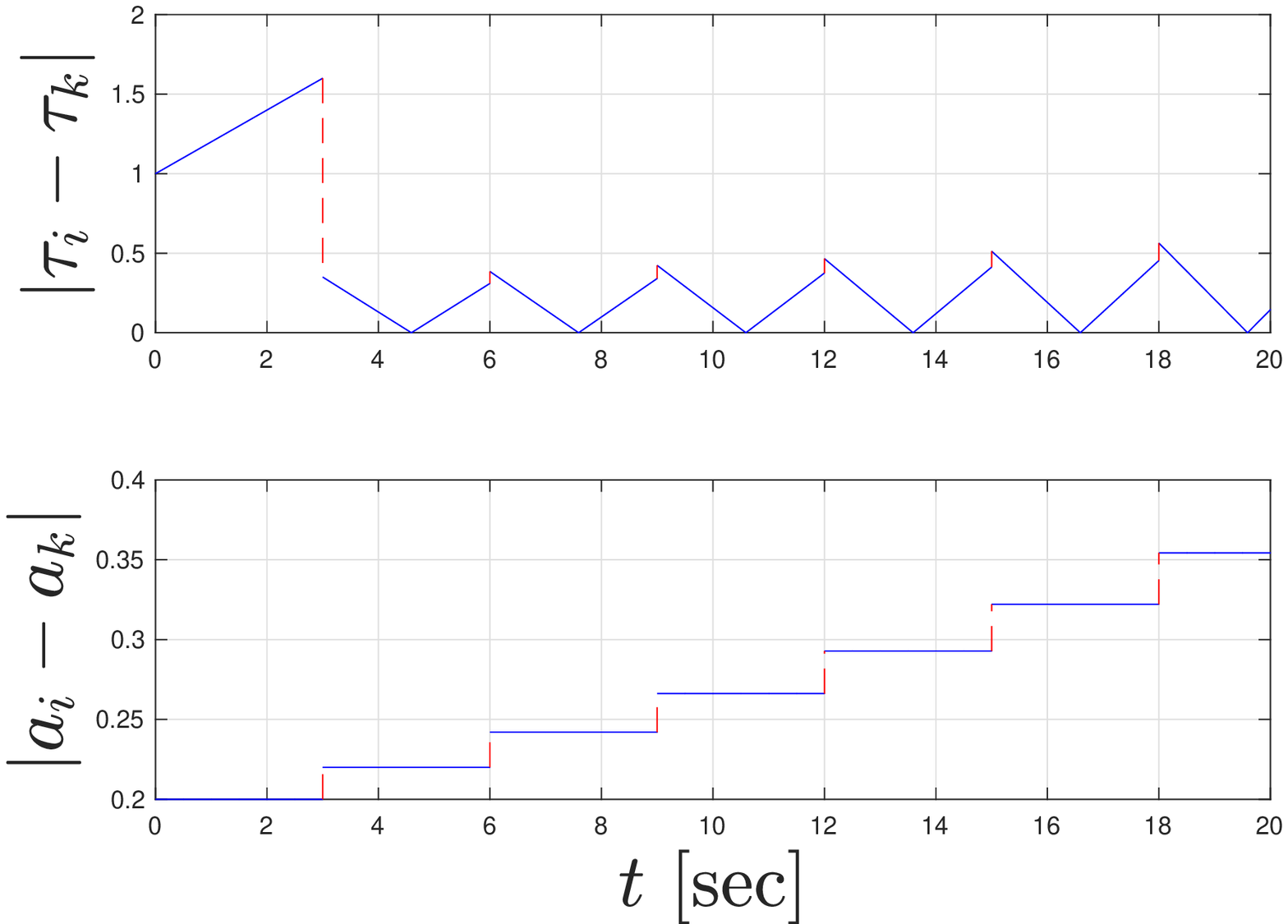}}
\subfigure[\label{fig:ex2b}]{\includegraphics[width=0.4\textwidth]{./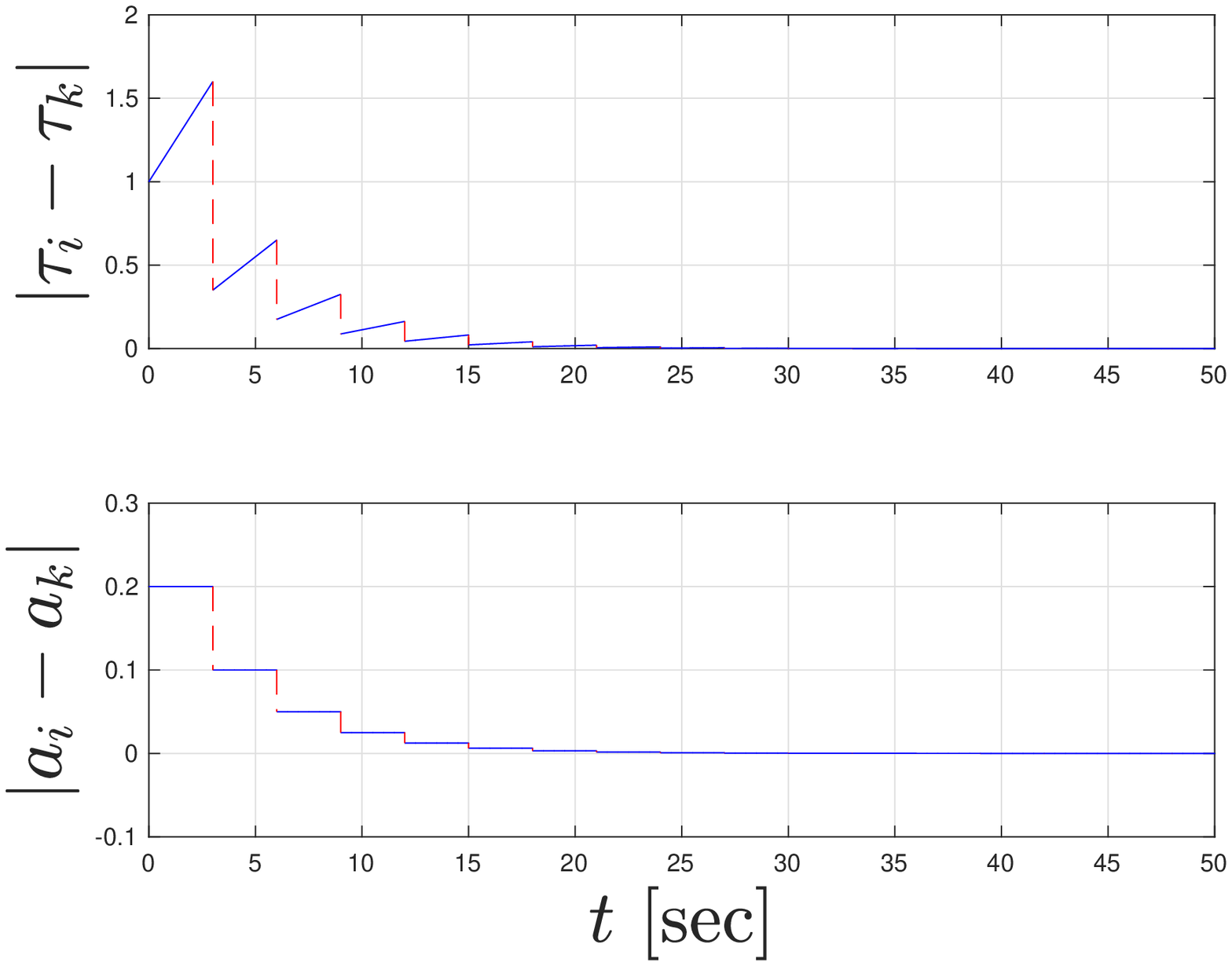}}
\caption{\label{fig:ex2} The evolution of the error in the clocks and clock rates of Nodes $i$ and $k$ when the algorithm applies both offset correction $K_{\tilde{o}}$ and clock rate correction $K_a$. Plot (a) demonstrates the case when $\mu$ is chosen arbitrarily while plot (b) depicts the scenario where $\mu$ is optimally chosen.}
\end{figure}

\ifbool{conf}{}{\section{Preliminaries on Hybrid Systems} \label{sec:prelim}

A hybrid system $\cal H$ in $\mathbb{R}^n$ is composed by the following \textit{data}: a set $C \subset \mathbb{R}^n$, called the flow set; a set-valued mapping $F: \mathbb{R}^n \rightrightarrows \mathbb{R}^n$ with $C \subset \mbox{dom } F$, called the flow map; a set $D \subset \mathbb{R}^n$, called the jump set; a set-valued mapping $G: \mathbb{R}^n \rightrightarrows \mathbb{R}^n$ with $D \subset \mbox{dom } G$, called the jump map. Then, a hybrid system $\mathcal{H} := (C,F,D,G)$ is written in the compact form
\begin{equation} \label{eqn:Hy}
\cal H \begin{cases} \dot{x} \hspace{2mm} \in F(x) \hspace{1cm} & x \in C \\
x^+ \in G(x) \hspace{1cm} & x \in D
\end{cases}
\end{equation}
\normalsize
\noindent
where $x$ is the system state. Solutions to hybrid systems are parameterized by $(t,j)$, where $t \in \mathbb{R}_{\geq 0}$ defines ordinary time and $j \in \mathbb{N}$ is a counter that defines the number of jumps. The evolution of $\phi$ is described by a \textit{hybrid arc} on a \textit{hybrid time domain} \cite{4}. A hybrid time domain is given by $\mbox{dom } \phi \subset \mathbb{R}_{\geq 0} \times \mathbb{N}$ if, for each $(T,J) \in \mbox{dom } \phi,$ $\mbox{dom } \phi \cap ([0,T] \times \{0,1,...,J\})$ is of the form $\bigcup_{j = 0}^J ([t_j, t_{j+1}] \times \{j\})$, with $0 = t_0 \leq t_1 \leq t_2 \leq t_{J + 1}$. A solution $\phi$ is said to be \textit{maximal} if it cannot be extended by flow or a jump, and \textit{complete} if its domain is unbounded. For a hybrid system that is \textit{well-posed}, the closed set $\A \subset \mathbb{R}^n$ is said to be: \textit{attractive} for $\cal H$ if there exists $\mu > 0$ such that every solution $\phi$ to $\cal H$ with $|\phi(0,0)|_{\A} \leq \mu$ is complete and satisfies $\lim_{t + j \rightarrow \infty} |\phi(t,j)|_{\A} = 0$.}

\section{A Hybrid Algorithm for Sender-Receiver Clock Synchronization} \label{sec:model}

\ifbool{conf}{In this section, we present our hybrid model that unites the characterization of the network dynamics for the message exchange with our proposed algorithm that ensures synchronization of the clocks. In addition, we present results and simulations to validate our model.}{}

\ifbool{conf}{
\subsection{Preliminaries on Hybrid Systems}
\label{sec:prelim}

A hybrid system $\cal H$ in $\mathbb{R}^n$ is composed by the following \textit{data}: a set $C \subset \mathbb{R}^n$, called the flow set; a set-valued mapping $F: \mathbb{R}^n \rightrightarrows \mathbb{R}^n$ with $C \subset \mbox{dom } F$, called the flow map; a set $D \subset \mathbb{R}^n$, called the jump set; a set-valued mapping $G: \mathbb{R}^n \rightrightarrows \mathbb{R}^n$ with $D \subset \mbox{dom } G$, called the jump map. Then, a hybrid system $\mathcal{H} := (C,F,D,G)$ is written in the compact form
\begin{equation} \label{eqn:Hy}
\cal H \begin{cases} \dot{x} \hspace{2mm} \in F(x) \hspace{1cm} & x \in C \\
x^+ \in G(x) \hspace{1cm} & x \in D
\end{cases}
\end{equation}
\normalsize
\noindent
where $x$ is the system state. Solutions to hybrid systems are parameterized by $(t,j)$, where $t \in \mathbb{R}_{\geq 0}$ defines ordinary time and $j \in \mathbb{N}$ is a counter that defines the number of jumps. The evolution of a solution is described by a \textit{hybrid arc} $\phi$ on a \textit{hybrid time domain} \cite{4}. A hybrid time domain is given by $\mbox{dom } \phi \subset \mathbb{R}_{\geq 0} \times \mathbb{N}$ if, for each $(T,J) \in \mbox{dom } \phi,$ $\mbox{dom } \phi \cap ([0,T] \times \{0,1,...,J\})$ is of the form $\bigcup_{j = 0}^J ([t_j, t_{j+1}] \times \{j\})$, with $0 = t_0 \leq t_1 \leq t_2 \leq t_{J + 1}$. A solution $\phi$ is said to be \textit{maximal} if it cannot be extended by flow or a jump, and \textit{complete} if its domain is unbounded. The set of all maximal solutions to a hybrid system $\HS$ is denoted by $\mathcal{S}_{\HS}$ and the set of all maximal solutions to $\HS$ with initial condition belonging to a set $A$ is denoted by $\mathcal{S}_{\HS}(A)$. A hybrid system is \textit{well-posed} if it satisfies the hybrid basic conditions in \cite[Assumption 6.5]{4}. 
Let $\A \subset \mathbb{R}^n$ be a closed set and $|x|_{\A} := \mbox{inf}_{y \in \A} |x - y|$. A closed set $\A \subset \mathbb{R}^n$ is said to be: \textit{attractive} for $\cal H$ if there exists $\mu > 0$ such that every solution $\phi$ to $\cal H$ with $|\phi(0,0)|_{\A} \leq \mu$ is complete and satisfies $\lim_{t + j \rightarrow \infty} |\phi(t,j)|_{\A} = 0$.
}{}

\ifbool{conf}{}{In this section we present our hybrid model that  captures  the network dynamics for the message exchange  and  our proposed algorithm that ensures synchronization of the clocks. Using the sender-receiver mechanism for exchanging the timing messages, our algorithm combines the offset correction law in (\ref{eqn:offset_law1}) with the proposed online, adaptive clock rate correction law given in (\ref{eqn:skew_law1}).}

\ifbool{conf}{\subsection{Problem Statement}

Our goal is to synchronize the internal clock of Node $k$ to that of Node $i$. In particular, our goal is to design a hybrid algorithm incorporating the sender-receiver algorithm such that the clock $\tau_k$ and clock rate $a_k$ of Node $k$ is driven to synchronization with $\tau_i$ and $a_i$ of Node $i$, respectively. Moreover, our objective is to provide tractable conditions that ensures attractivity. This problem is formally stated as follows:

\begin{problem} \label{prob:1}
Given two nodes in a sender-receiver hierarchy with clocks having dynamics as in (\ref{eqn:clocks_dyn}) with timestamps $T_j^i$, $T_j^k$ and parameters $c$, $d$, design a hybrid algorithm such that each trajectory $t \mapsto (\tau_{i}(t), \tau_{k}(t))$ satisfies $\lim_{t \to\infty} | \tau_{i}(t) - \tau_{k}(t) | = 0$ and $\lim_{t \to\infty} | \dot{\tau}_{i}(t) - \dot{\tau}_{k}(t) | = 0$
\end{problem}

}{}

\subsection{Modeling}

Given the mix of continuous and discrete dynamics of the system, i.e., the continuous evolution of the clocks and the discrete events of the computation and network transmission, a hybrid modeling approach is a natural fit to  perform the needed  analysis and design goals to solve Problem \ref{prob:1}. Thus, with our problem defined formally, we present a hybrid model that captures the proposed algorithm given in Section \ref{sec:proposed_algo}.  To model the hardware and communication dynamics of the system, namely, the residence and transit times elapsed between the timing messages, we consider a global timer $\tau \in [0,d]$ with dynamics 
\begin{equation} \label{eqn:timer_dyn} 
\begin{aligned}
& \dot{\tau} = -1 & & \tau \in [0,d] \\
& \tau^+ \in \{c, d \}  & & \tau = 0 \\
\end{aligned}
\end{equation}
\noindent
 In this model, the  timer $\tau$ is  reset to either $c$ or $d$ when $\tau = 0$ in order to preserve the bounds given in (\ref{eqn:t_bounds}). We remind the reader that the constant $c$ denotes the residence delay and $d$ denotes the transmission or propagation delay. To determine the appropriate choice for the new value of $\tau$, namely, $\tau^+$, we define a discrete variable $q \in \{0, 1\} =: \mathcal{Q}$ to indicate the residence or transmission state of the system, namely, whether the system is servicing a message at one of the two nodes or whether the system is waiting for the arrival of a message at either of the two nodes, respectively.  
\ifbool{conf}{The state vectors 
\begin{align*}
\m^i & = [\m^i_1, \m^i_2, \m^i_3, \m^i_4, \m^i_5, \m^i_6]^{\top} \in \reals^6 \\
\m^k & = [\m^k_1, \m^k_2, \m^k_3, \m^k_4, \m^k_5, \m^k_6]^{\top} \in \reals^6
\end{align*}
represent memory buffers to store the received and transmitted timestamps  respectively, for Node $i$ and Node $k$.   In addition, a second discrete variable $p \in \{0,1,2,3,4,5\} =: \mathcal{P}$ is used to track the state of the protocol corresponding to the events defined in \ref{itm:one}-\ref{itm:six} of the sender-receiver algorithm. Then, by incorporating the clocks $\tau_i$, $\tau_k$ and the clock rates $a_i$, $a_k$ as state variables to the model, the state $x$ of the complete hybrid system is defined as
\begin{equation*}
x := (\tau_{i}, \tau_{k}, a_{i}, a_{k}, \tau, \m^i, \m^k, p, q) \in \mathcal{X}
\end{equation*}
\noindent
where
\begin{equation*}
\mathcal{X} := \reals \times \reals \times \reals \times \reals \times [0,d] \times \mathbb{R}^6 \times \mathbb{R}^6 \times \mathcal{P} \times \mathcal{Q}
\end{equation*}
\noindent
Then by noting the dynamics of the clocks as given in (\ref{eqn:clocks_dyn}) and those of the timer $\tau$ above, the continuous dynamics of $x$ is given by the following flow map
\begin{equation*}
\begin{aligned}
f(x) = (a_{i},a_{k},0, 0, -1,0,0,0,0) & & \forall x \in C := \mathcal{X}
\end{aligned}
\end{equation*}
}{
 The state  vectors $$
\m^i = [\m^i_1, \m^i_2, \m^i_3, \m^i_4, \m^i_5, \m^i_6]^{\top} \in \reals^6$$ and $$\m^k = [\m^k_1, \m^k_2, \m^k_3, \m^k_4, \m^k_5, \m^k_6]^{\top} \in \reals^6$$ represent memory buffers to store the received and transmitted timestamps  respectively, for Node $i$ and Node $k$.  In addition, a second discrete variable $p \in \{0,1,2,3,4,5\} =: \mathcal{P}$ is used to track  at which stage of the message exchange, defined in \ref{itm:one}-\ref{itm:six}, the algorithm is at.  Then, by incorporating the clocks $\tau_i$, $\tau_k$ and the clock rates $a_i$, $a_k$ as state variables to the model as in Section \ref{sec:prelim_sendRec},  the state $x$ of the hybrid system model, denoted $\HS$, is given by 
\begin{equation*}
x := (\tau_{i}, \tau_{k}, a_{i}, a_{k}, \tau, \m^i, \m^k, p, q) \in \mathcal{X}
\end{equation*}
where
\begin{equation*}
\mathcal{X} := \reals \times \reals \times \reals \times \reals \times [0,d] \times \mathbb{R}^6 \times \mathbb{R}^6 \times \mathcal{P} \times \mathcal{Q}
\end{equation*}
With the dynamics of the clocks as given in (\ref{eqn:clocks_dyn}) and those of the timer $\tau$ in (\ref{eqn:timer_dyn}), the flow map is defined as
\begin{equation} \label{eqn:Hy_f}
\begin{aligned}
F(x) := (a_{i},a_{k},0, 0, -1,0,0,0,0) & & \forall x \in C
\end{aligned}
\end{equation}}
 the flow set $C$ is defined as  
\begin{equation}
C := C_1 \cup C_2
\end{equation}
where $$C_1 := \{ x \in \mathcal{X} : q = 0, \tau \in [0,c] \}$$ and $$C_2 := \{ x \in \mathcal{X} : q = 1, \tau \in [0,d] \}$$  To model the communication and arrival events of the message exchange and the proposed mechanisms correcting the clock rate and offset, we define the jump map $G : \mathbb{R}^n \rightarrow \mathbb{R}^n$ as  
\begin{equation} \label{eqn:Hy_g}
G(x) := G_i(x) \hspace{2mm} \mbox{if } x \in D_{i}
\end{equation}
\noindent
\ifbool{conf}{Each map in $G$ corresponds to the message exchange events \ref{itm:one}-\ref{itm:six}, i.e., $G_{1}$ is equivalent to the event \ref{itm:one}, $G_{2}$ is equivalent to the event \ref{itm:two}, etc.}{ where each mapping  $G_i$ used to define $G$  corresponds to the message exchange events \ref{itm:one}-\ref{itm:six} as follows
\begin{itemize} 
\item $G_{1}$: Node $i$ broadcasts a synchronization message to Node $k$ timestamped with $\tau_{i}$ as in \ref{itm:one}. This event is triggered by the jump set $D_1$, namely, when the timer $\tau = 0$ and the discrete variable $p$ describing the protocol state is zero.  At this event, the timer $\tau$ is reset to $d$, to initiate the message transmission delay. Similarly, the state $q$ is reset to $1$ to indicate the message transmission state of the system with $p$ augmented by one to trigger the next protocol state. Finally, $m_1^i$ is set to $\tau_i$ to record the time of message broadcast, relative to the clock of Node $i$. The subsequent memory states $m_2^i, \ldots m_6^i$ are reset to $m_1^i, \ldots m_5^i$, respectively.
\item $G_{2}$: Node $k$ receives the synchronization message and timestamps its arrival with $\tau_{k}$ as in \ref{itm:two}. This event is triggered by the jump set $D_2$, namely, when the timer $\tau = 0$ and the discrete variable $p$ describing the protocol state is one. At this event, the timer $\tau$ is reset to $c$, to initiate the residence delay. Similarly, the state $q$ is reset to $0$ to indicate the residence state of the system. Finally, $m_1^k$ is set to $\tau_k$ to record the time of message broadcast, relative to the clock of Node $i$. The subsequent memory states $m_2^k, \ldots m_6^k$ are reset to $m_1^i, \ldots m_5^i$, respectively. 
\item $G_{3}$: Node $k$ broadcasts a response message timestamped with $\tau_{k}$ as in \ref{itm:three}. This event is triggered by the jump set $D_3$, namely, when the timer $\tau = 0$ and the discrete variable $p$ describing the protocol state is two. At this event,  the timer $\tau$ is reset to $d$, to initiate the message transmission delay. Similarly, the state $q$ is reset to $1$ to indicate the message transmission state of the system. Finally, $m_1^i$ is set to $\tau_i$ to record the time of message broadcast, relative to the clock of Node $i$. The subsequent memory states $m_2^i, \ldots m_6^i$ are reset to $m_1^i, \ldots m_5^i$, respectively.
\item $G_{4}$: Node $i$ receives the response message and timestamps its arrival with $\tau_{k}$ as in \ref{itm:four}. This event is triggered by the jump set $D_4$, namely, when the timer $\tau = 0$ and the discrete variable $p$ describing the protocol state is three. At this event, the timer $\tau$ is reset to $c$, to initiate the residence delay. Similarly, the state $q$ is reset to $0$ to indicate the residence state of the system. Finally, $m_1^k$ is set to $\tau_k$ to record the time of message broadcast, relative to the clock of Node $i$. The subsequent memory states $m_2^k, \ldots m_6^k$ are reset to $m_1^i, \ldots m_5^i$, respectively.
\item $G_{5}$: Node $i$ broadcasts a response receipt message timestamped with $\tau_{i}$ as in \ref{itm:five}. This event is triggered by the jump set $D_5$, namely, when the timer $\tau = 0$ and the discrete variable $p$ describing the protocol state is four. At this event,  the timer $\tau$ is reset to $d$, to initiate the message transmission delay. Similarly, the state $q$ is reset to $1$ to indicate the message transmission state of the system. Finally, $m_1^i$ is set to $\tau_i$ to record the time of message broadcast, relative to the clock of Node $i$. The subsequent memory states $m_2^i, \ldots m_6^i$ are reset to $m_1^i, \ldots m_5^i$, respectively.
\item $G_{6}$: Node $k$ uses the timestamped messages to update its clock rate and offset via $K_{\tilde{o}}(x)$ in (\ref{eqn:offset_law1}) and $K_{a}(x)$ in (\ref{eqn:offset_law2}), respectively as in \ref{itm:six}. This event is triggered by the jump set $D_6$, namely, when the timer $\tau = 0$ and the discrete variable $p$ describing the protocol state is five. At this event, the timer $\tau$ is reset to $c$, to initiate the residence delay. Similarly, the state $q$ is reset to $0$ to indicate the residence state of the system. Finally, $m_1^k$ is set to $\tau_k$ to record the time of message broadcast, relative to the clock of Node $i$. The subsequent memory states $m_2^k, \ldots m_6^k$ are reset to $m_1^i, \ldots m_5^i$, respectively.
\end{itemize}}  More precisely, the maps  $G_1, G_2, G_3, G_4, G_5, G_6$, updating $x = (\tau_{i}, \tau_{k}, a_{i}, a_{k}, \tau, \m^i, \m^k, p, q)$,  are defined by\footnote{Note that $[x^{\top}, y^{\top}]^\top = (x,y)$.} 
\ifbool{conf}{
\small
\begin{equation*}
\begin{aligned}
& \hspace{-0.5mm}
G_{1}(x) \mequals \begin{bmatrix}
\renewcommand\arraystretch{0.7}
\begin{bmatrix}
\tau_{i} & \tau_{k}
\end{bmatrix}^{\mtop} \\
\begin{bmatrix}
a_{i} & a_{k}
\end{bmatrix}^{\mtop} \\
d \\
\begin{bmatrix}
\begin{bmatrix}
\tau_{i} & \m_1^i & ... & \m_5^i
\end{bmatrix} & \m^k 
\end{bmatrix}^{\mtop} \\
p + 1 \\ 
1
\end{bmatrix}
\hspace{-0.5mm}
G_{2}(x) \mequals \begin{bmatrix}
\renewcommand\arraystretch{0.7}
\begin{bmatrix}
\tau_{i} & \tau_{k}
\end{bmatrix}^{\mtop} \\
\begin{bmatrix}
a_{i} & a_{k}
\end{bmatrix}^{\mtop} \\
c \\
\begin{bmatrix}
\m^i & \begin{bmatrix}
\tau_{k} & \m_1^i & ... & \m_5^i
\end{bmatrix} 
\end{bmatrix}^{\mtop} \\
p + 1 \\ 
0
\end{bmatrix} \\
& \hspace{-0.5mm} G_{3}(x) \mequals \begin{bmatrix}
\renewcommand\arraystretch{0.7}
\begin{bmatrix}
\tau_{i} & \tau_{k}
\end{bmatrix}^{\mtop} \\
\begin{bmatrix}
a_{i} & a_{k}
\end{bmatrix}^{\mtop} \\
d \\
\begin{bmatrix}
\m^i & \begin{bmatrix}
\tau_{k} & \m_1^k & ... & \m_5^k
\end{bmatrix} 
\end{bmatrix}^{\mtop} \\
p + 1 \\ 
1
\end{bmatrix}
\hspace{-0.5mm}
G_{4}(x) \mequals \begin{bmatrix}
\renewcommand\arraystretch{0.7}
\begin{bmatrix}
\tau_{i} & \tau_{k}
\end{bmatrix}^{\mtop} \\
\begin{bmatrix}
a_{i} & a_{k}
\end{bmatrix}^{\mtop} \\
c \\
\begin{bmatrix}
\begin{bmatrix}
\tau_{i} & \m_1^k & ... & \m_5^k
\end{bmatrix} \m^k 
\end{bmatrix}^{\mtop} \\
p + 1 \\ 
0
\end{bmatrix} \\
& \hspace{-0.5mm} G_{5}(x) \mequals \begin{bmatrix}
\renewcommand\arraystretch{0.7}
\begin{bmatrix}
\tau_{i} & \tau_{k}
\end{bmatrix}^{\mtop} \\
\begin{bmatrix}
a_{i} & a_{k}
\end{bmatrix}^{\mtop} \\
d \\
\begin{bmatrix}
\begin{bmatrix}
\tau_{i} & \m_1^i & ... & \m_5^i
\end{bmatrix} & \m^k 
\end{bmatrix}^{\mtop} \\
p + 1 \\ 
1
\end{bmatrix}
\hspace{-0.5mm}
G_{6}(x) \mequals \begin{bmatrix}
\renewcommand\arraystretch{0.7}
\begin{bmatrix}
\tau_{i} & \tau_{k} - K_{\tilde{o}}(x) 
\end{bmatrix}^{\mtop} \\
\begin{bmatrix}
a_{i} & a_{k} + K_{a}(x) 
\end{bmatrix}^{\mtop} \\
c \\
\begin{bmatrix}
\m^i & \begin{bmatrix}
\tau_{k} & \m_1^i & {...} & \m_5^i
\end{bmatrix}
\end{bmatrix}^{\mtop} \\
0 \\ 
0
\end{bmatrix}
\end{aligned}
\end{equation*}}{
\begin{equation} \label{eqn:G_i's}
\begin{aligned}
&
G_{1}(x) := \begin{bmatrix}
\renewcommand\arraystretch{0.7}
(\tau_{i}, \tau_{k}) \\
(a_{i}, a_{k}) \\
d \\
(\tau_{i} , \m_1^i , \ldots , \m_5^i), \m^k) \\
p + 1 \\ 
1
\end{bmatrix},
& &
G_{2}(x) := \begin{bmatrix}
\renewcommand\arraystretch{0.7}
(\tau_{i}, \tau_{k}) \\
(a_{i}, a_{k}) \\
c \\
(\m^i , \tau_{k} , \m_1^i , \ldots , \m_5^i ) \\
p + 1 \\ 
0
\end{bmatrix} \\
&
G_{3}(x) := \begin{bmatrix}
\renewcommand\arraystretch{0.7}
(\tau_{i}, \tau_{k}) \\
(a_{i}, a_{k}) \\
d \\
( \m^i , \tau_{k} , \m_1^k , \ldots , \m_5^k ) \\
p + 1 \\ 
1
\end{bmatrix},
& &
G_{4}(x) := \begin{bmatrix}
\renewcommand\arraystretch{0.7}
(\tau_{i}, \tau_{k}) \\
(a_{i}, a_{k}) \\
c \\
( \tau_{i} , \m_1^k , \ldots , \m_5^k, \m^k ) \\
p + 1 \\ 
0
\end{bmatrix} \\
& G_{5}(x) := \begin{bmatrix}
\renewcommand\arraystretch{0.7}
(\tau_{i}, \tau_{k}) \\
(a_{i}, a_{k}) \\
d \\
( \tau_{i} , \m_1^i , \ldots , \m_5^i, \m^k) \\
p + 1 \\ 
1
\end{bmatrix},
& &
G_{6}(x) := \begin{bmatrix}
\renewcommand\arraystretch{0.7}
(\tau_{i}, \tau_{k} - K_{\tilde{o}}(\m^i) ) \\
(a_{i}, a_{k} + K_{a}(\m^i, \tau_k) ) \\
c \\
( \m^i , \tau_{k} , \m_1^i , \ldots , \m_5^i ) \\
0 \\ 
0
\end{bmatrix}
\end{aligned}
\end{equation}}
\normalsize
\noindent
with 
\begin{equation} \label{eqn:offset_law2}
K_{\tilde{o}}(\m^i) = \frac{1}{2} (\m_4^i- \m_5^i - \m_2^i + \m_3^i)
\end{equation}
\noindent
and
\begin{equation} \label{eqn:skew_law2}
K_{a}(\m^i, \tau_k) = \mu \big ( (\m_1^i - \m_5^i ) - (\tau_{k} - \m_4^i) \big )
\end{equation} 
\noindent
with $\mu > 0$. The offset correction  implemented by the feedback law  $K_{\tilde{o}}$ in (\ref{eqn:offset_law2}) is an adapted version of the offset correction algorithm given in (\ref{eqn:offset_law1}) suitable for the hybrid system model  where the memory states $\m^i$ and $\m^k$ contain the stored timestamps $T_j^i$ and $T_j^k$, respectively. Note that the feedback laws $K_{\tilde{o}}$ and $K_{a}$  depend on the correct assignment of the timestamps to the memory states. In the forthcoming Lemmas \ref{lem:fwd_inv_M} and \ref{lem:finite_M}, we show finite time attractivity of a set containing the correct assignment of the memory states for the appropriate feedback. 
\noindent
 To trigger the jumps corresponding to the particular protocol  events \ref{itm:one}-\ref{itm:six},  we define the  jump set as  $$D := D_{1} \cup D_{2} \cup D_{3} \cup D_{4} \cup D_{5} \cup D_{6}$$ where
\ifbool{conf}{
\begin{equation*}
\begin{aligned}
D_{1} & := \{x \in \mathcal{X} : \tau {=} 0, p {=} 0 \}, \hspace{1.5mm}
D_{2} := \{x \in \mathcal{X} : \tau {=} 0, p {=} 1 \} \\
D_{3} & := \{x \in \mathcal{X} : \tau {=} 0, p {=} 2 \}, \hspace{1.5mm}
D_{4} := \{x \in \mathcal{X} : \tau {=} 0, p {=} 3 \} \\
D_{5} & := \{x \in \mathcal{X} : \tau {=} 0, p {=} 4 \}, \hspace{1.5mm}
D_{6} := \{x \in \mathcal{X} : \tau {=} 0, p {=} 5 \}
\end{aligned}
\end{equation*}}{
\begin{equation*}
\begin{aligned}
D_{1} & := \{x \in \mathcal{X} : \tau = 0, p = 0 \}, \hspace{1cm}
D_{2} & := \{x \in \mathcal{X} : \tau = 0, p = 1 \} \\
D_{3} & := \{x \in \mathcal{X} : \tau = 0, p = 2 \}, \hspace{1cm}
D_{4} & := \{x \in \mathcal{X} : \tau = 0, p = 3 \} \\
D_{5} & := \{x \in \mathcal{X} : \tau = 0, p = 4 \}, \hspace{1cm}
D_{6} & := \{x \in \mathcal{X} : \tau = 0, p = 5 \}
\end{aligned}
\end{equation*}}
\noindent
With the data defined, we let $\HS = (C,F,D,G)$ denote the hybrid system for the pairwise broadcast synchronization algorithm between Node $i$ and Node $k$.

\subsection{Error Model}

To show that the proposed algorithm solves Problem \ref{prob:1}, we recast the problem as a set stabilization problem. Namely, we show that solutions $\phi$ to $\HS$,  with data $(C,F,D,G)$ given in (\ref{eqn:Hy}),  converge to a set of interest wherein the clock states $\tau_i$, $\tau_k$ and clock rates $a_i$,$a_k$, respectively, coincide. To this end, we consider an augmented model of $\HS$ in error coordinates to capture such a property. Let $\varepsilon := (\varepsilon_{\tau}, \varepsilon_a) \in \reals^2$, where  $\varepsilon_{\tau} := \tau_{i} - \tau_{k}$  defines the clock error and $\varepsilon_a := a_{i} - a_{k}$ defines the clock rate error of Nodes $i$ and $k$.  Then, define
\begin{equation*}
x_{\varepsilon} := (\varepsilon, x) \in \mathcal{X}_{\varepsilon} := \reals^2 \times \mathcal{X}
\end{equation*}
\noindent
 which is the state\footnote{ The full state vector $x$ to $\HS$ is retained to facilitate the implementation of the synchronization algorithm for $\HS_{\varepsilon}$.} that collects the clock errors, clock rate errors, and the state of the system $\HS$. The continuous evolution of $x_{\varepsilon}$ is governed by
\begin{equation} \label{eqn:HyEps_fx}
\begin{aligned}
\dot{x}_{\varepsilon} = F_{\varepsilon}(x_{\varepsilon}) := \big ( A_f \varepsilon, F(x) \big ) & & x_{\varepsilon} \in C_{\varepsilon}
\end{aligned}
\end{equation}
\noindent
where $A_f = \begin{bmatrix}
0 & 1 \\ 0 & 0
\end{bmatrix}$ and $f$ is defined in (\ref{eqn:Hy_f}).  The flow set $C_{\varepsilon}$ is defined as
\begin{equation}
C_{\varepsilon} := C_{\varepsilon_1} \cup C_{\varepsilon_1}
\end{equation}
where $$C_{\varepsilon_1} := \{ x_{\varepsilon} \in \mathcal{X}_{\varepsilon} : q = 0, \tau \in [0,c] \}$$ and $$C_{\varepsilon_2} := \{ x_{\varepsilon} \in \mathcal{X}_{\varepsilon} : q = 1, \tau \in [0,d] \}$$ The discrete changes of $x_{\varepsilon}$ are determined by the discrete changes of $\varepsilon$ and $x$, the latter of which is given in (\ref{eqn:Hy_g}).  Through the computation of $\varepsilon^+ = (\varepsilon_{\tau}^+, \varepsilon_{a}^+)$ using the jump maps in (\ref{eqn:G_i's}),  the resulting evolution is modeled by the jump map $G_{\varepsilon} : \mathcal{X}_{\varepsilon} \rightarrow \mathcal{X}_{\varepsilon}$ given by

\begin{equation} \label{eqn:Hy2_G}
G_{\varepsilon} (x_{\varepsilon}) := G_{\varepsilon_i} (x_{\varepsilon}) \quad \mbox{if } x_{\varepsilon} \in D_{\varepsilon_i}
\end{equation}
\noindent
where
\ifbool{conf}{
\begin{equation*}
\begin{aligned}
& G_{\varepsilon_1}(x_{\varepsilon}) {=} \begin{bmatrix}
\renewcommand\arraystretch{0.7}
\varepsilon \\
G_1(x)
\end{bmatrix},
\hspace{1mm}
G_{\varepsilon_2}(x_{\varepsilon}) {=} \begin{bmatrix}
\renewcommand\arraystretch{0.7}
\varepsilon \\
G_2(x)
\end{bmatrix} \\
& G_{\varepsilon_3}(x_{\varepsilon}) {=} \begin{bmatrix}
\renewcommand\arraystretch{0.7}
\varepsilon \\
G_3(x)
\end{bmatrix}, 
\hspace{1mm} 
G_{\varepsilon_4}(x_{\varepsilon}) {=} \begin{bmatrix}
\renewcommand\arraystretch{0.7}
\varepsilon \\
G_4(x)
\end{bmatrix} \\
& G_{\varepsilon_5}(x_{\varepsilon}) {=} \begin{bmatrix}
\renewcommand\arraystretch{0.7}
\varepsilon \\
G_5(x)
\end{bmatrix},
\hspace{1mm}
G_{\varepsilon_6}(x_{\varepsilon}) {=} \begin{bmatrix}
\renewcommand\arraystretch{0.7}
\varepsilon + \begin{bmatrix}
K_{\tilde{o}}(x) \\ \minus K_{a}(x)
\end{bmatrix} \\
G_6(x)
\end{bmatrix}
\end{aligned}
\end{equation*}}
{\begin{equation*}
\begin{aligned}
& G_{\varepsilon_1}(x_{\varepsilon}) {:=} \begin{bmatrix}
\renewcommand\arraystretch{0.7}
\varepsilon \\
G_1(x)
\end{bmatrix},
\hspace{1mm}
G_{\varepsilon_2}(x_{\varepsilon}) {:=} \begin{bmatrix}
\renewcommand\arraystretch{0.7}
\varepsilon \\
G_2(x)
\end{bmatrix},
\hspace{1mm}
G_{\varepsilon_3}(x_{\varepsilon}) {:=} \begin{bmatrix}
\renewcommand\arraystretch{0.7}
\varepsilon \\
G_3(x)
\end{bmatrix} \\
& G_{\varepsilon_4}(x_{\varepsilon}) {:=} \begin{bmatrix}
\renewcommand\arraystretch{0.7}
\varepsilon \\
G_4(x)
\end{bmatrix},
\hspace{1mm}
G_{\varepsilon_5}(x_{\varepsilon}) {:=} \begin{bmatrix}
\renewcommand\arraystretch{0.7}
\varepsilon \\
G_5(x)
\end{bmatrix},
\hspace{1mm}
G_{\varepsilon_6}(x_{\varepsilon}) {:=} \begin{bmatrix}
\renewcommand\arraystretch{0.7}
\varepsilon + \begin{bmatrix}
K_{\tilde{o}}(\m^i) \\ \minus K_{a}(\m^i, \tau_k)
\end{bmatrix} \\
G_6(x)
\end{bmatrix}
\end{aligned}
\end{equation*}}
\normalsize
\noindent
Observe that the  feedback laws $K_{\tilde{o}}$ and $K_{a}$ are employed when $\varepsilon$ is updated by $G_{\varepsilon_6}$, similarly to when $G_6$ is employed $\HS$. These discrete dynamics apply when $x$ is in $D_{\varepsilon} := D_{\varepsilon_1} \cup D_{\varepsilon_2} \cup D_{\varepsilon_3} \cup D_{\varepsilon_4} \cup D_{\varepsilon_5} \cup D_{\varepsilon_6}$, where 
\ifbool{conf}{
\begin{equation*}
\begin{aligned}
D_{\varepsilon_1} & {:=} \{x_{\varepsilon} \in \mathcal{X}_{\varepsilon} : \tau {=} 0, p {=} 0 \}, \hspace{1.5mm}
D_{\varepsilon_2} {:=} \{x_{\varepsilon} \in \mathcal{X}_{\varepsilon} : \tau {=} 0, p {=} 1 \} \\
D_{\varepsilon_3} & {:=} \{x_{\varepsilon} \in \mathcal{X}_{\varepsilon} : \tau {=} 0, p {=} 2 \}, \hspace{1.5mm}
D_{\varepsilon_4} {:=} \{x_{\varepsilon} \in \mathcal{X}_{\varepsilon} : \tau {=} 0, p {=} 3 \} \\
D_{\varepsilon_5} & {:=} \{x_{\varepsilon} \in \mathcal{X}_{\varepsilon} : \tau {=} 0, p {=} 4 \}, \hspace{1.5mm}
D_{\varepsilon_6} {:=} \{x_{\varepsilon} \in \mathcal{X}_{\varepsilon} : \tau {=} 0, p {=} 5 \} \\
\end{aligned}
\end{equation*}
}
{\begin{equation*}
\begin{aligned}
D_{\varepsilon_1} & := \{x_{\varepsilon} \in \mathcal{X}_{\varepsilon} : \tau = 0, p = 0 \}, \hspace{1cm}
D_{\varepsilon_2} & := \{x_{\varepsilon} \in \mathcal{X}_{\varepsilon} : \tau = 0, p = 1 \} \\
D_{\varepsilon_3} & := \{x_{\varepsilon} \in \mathcal{X}_{\varepsilon} : \tau = 0, p = 2 \}, \hspace{1cm}
D_{\varepsilon_4} & := \{x_{\varepsilon} \in \mathcal{X}_{\varepsilon} : \tau = 0, p = 3 \} \\
D_{\varepsilon_5} & := \{x_{\varepsilon} \in \mathcal{X}_{\varepsilon} : \tau = 0, p = 4 \}, \hspace{1cm}
D_{\varepsilon_6} & := \{x_{\varepsilon} \in \mathcal{X}_{\varepsilon} : \tau = 0, p = 5 \} \\
\end{aligned}
\end{equation*}}
\noindent
This hybrid system is denoted 
\begin{equation} \label{eqn:Hy2}
\HS_{\varepsilon} = (C_{\varepsilon}, F_{\varepsilon}, D_{\varepsilon}, G_{\varepsilon})
\end{equation}

The set to render attractive so as to solve Problem \ref{prob:1} is given by
\begin{equation} \label{eqn:set}
\A_{\varepsilon} := \{ x_{\varepsilon} \in \mathcal{X}_{\varepsilon} : \varepsilon = 0 \}
\end{equation}
\noindent
where $\varepsilon = 0$ implies synchronization of both the clock offset and  the clock rate, since, when $\varepsilon_{\tau} = 0$ and $\varepsilon_a = 0$,  then $\tau_k$ is synchronized to $\tau_i$. 

\ifbool{conf}{}{\subsection{Basic Properties of ${\bf \HS_{\varepsilon}}$} \label{sec:prop_of_hs}


Having the hybrid system $\HS_{\varepsilon}$ defined, the next two results establish existence of solutions to $\HS_{\varepsilon}$ and that every maximal solution to $\HS_{\varepsilon}$ is complete. In particular, we show that, through the satisfaction of some basic conditions on the hybrid system data, which is shown first, the system $\HS_{\varepsilon}$ is well-posed and that each maximal solution to the system is defined for arbitrarily large $t + j$.

\begin{lemma} \label{lem:hbcs}
The hybrid system  $\HS_{\varepsilon} = (C_{\varepsilon}, F_{\varepsilon}, D_{\varepsilon}, G_{\varepsilon})$  satisfies the following conditions, defined in \cite[Assumption 6.5]{4} as the hybrid basic  conditions; namely, 
\begin{itemize}
\item[(A1)] $C_{\varepsilon}$ and $D_{\varepsilon}$ are closed subsets of $\reals^m$; 
\item[(A2)] $F_{\varepsilon} : \reals^m \rightarrow \reals^m$ is continuous;
\item[(A3)] $G_{\varepsilon} : \reals^m \rightrightarrows \reals^m$ is outer semicontinuous and locally bounded relative to $D_{\varepsilon}$, and $D_{\varepsilon} \subset \mbox{dom } G_{\varepsilon}$.
\end{itemize}
\end{lemma}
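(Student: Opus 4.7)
The plan is to verify the three hybrid basic conditions directly from the construction of the data $(C_{\varepsilon}, F_{\varepsilon}, D_{\varepsilon}, G_{\varepsilon})$ given in (\ref{eqn:HyEps_fx})--(\ref{eqn:Hy2_G}), exploiting the fact that every component map is either constant, linear, or affine in the state.

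For (A1), I would argue that each constituent piece is closed. The set $C_{\varepsilon_1}$ is the intersection of the preimages under the projections onto the $q$- and $\tau$-coordinates of $\{0\}$ and $[0,c]$, respectively, both of which are closed; the analogous argument applies to $C_{\varepsilon_2}$ with $\{1\}$ and $[0,d]$. Hence $C_{\varepsilon} = C_{\varepsilon_1} \cup C_{\varepsilon_2}$ is a finite union of closed sets and thus closed. The same reasoning shows that each $D_{\varepsilon_i}$, defined by $\tau = 0$ and $p$ equal to a fixed integer in $\{0,1,\ldots,5\}$, is closed, so $D_{\varepsilon} = \bigcup_{i=1}^6 D_{\varepsilon_i}$ is closed as well.

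For (A2), the flow map $F_{\varepsilon}(x_{\varepsilon}) = (A_f \varepsilon, F(x))$ is affine in $x_{\varepsilon}$: the first block is a linear function of $\varepsilon$ via the constant matrix $A_f$, and $F(x) = (a_i, a_k, 0, 0, -1, 0, \ldots, 0)$ depends linearly (hence continuously) on the clock-rate components. Therefore $F_{\varepsilon}$ is continuous on all of $\reals^m$.

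For (A3), the key observations are that the six jump sets $D_{\varepsilon_i}$ are pairwise disjoint, since they are distinguished by distinct values of $p \in \{0,1,2,3,4,5\}$; that each candidate jump map $G_{\varepsilon_i}$ is a continuous single-valued function on $\mathcal{X}_{\varepsilon}$, as it is a polynomial of degree at most one in its arguments, using also that $K_{\tilde{o}}$ in (\ref{eqn:offset_law2}) and $K_a$ in (\ref{eqn:skew_law2}) are linear in the memory and clock variables; and that together they cover $D_{\varepsilon}$, so $D_{\varepsilon} \subset \mbox{dom } G_{\varepsilon}$. Since each restriction $G_{\varepsilon}|_{D_{\varepsilon_i}} = G_{\varepsilon_i}|_{D_{\varepsilon_i}}$ is continuous on the closed set $D_{\varepsilon_i}$, its graph is closed in $\mathcal{X}_{\varepsilon} \times \mathcal{X}_{\varepsilon}$; by the pairwise disjointness, the graph of $G_{\varepsilon}$ over $D_{\varepsilon}$ is the finite union of these closed graphs and hence closed, which is equivalent to outer semicontinuity. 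Local boundedness relative to $D_{\varepsilon}$ then follows from continuity of each $G_{\varepsilon_i}$. The proof is essentially mechanical, so there is no substantive obstacle; the only point worth double-checking is the disjointness of the $D_{\varepsilon_i}$, which ensures that the piecewise definition of $G_{\varepsilon}$ yields an unambiguous (single-valued) map with a closed graph.
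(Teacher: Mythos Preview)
Your proposal is correct and follows essentially the same route as the paper's proof: both verify (A1) by writing $C_{\varepsilon}$ and $D_{\varepsilon}$ as finite unions of closed sets, (A2) by noting $F_{\varepsilon}$ is affine, and (A3) by using that the $D_{\varepsilon_i}$ are pairwise disjoint closed sets on which the corresponding $G_{\varepsilon_i}$ are continuous. The paper additionally remarks that the $D_{\varepsilon_i}$ are uniformly separated (since they are indexed by distinct values of the discrete variable $p$), whereas you frame (A3) more explicitly via the closed-graph characterization of outer semicontinuity; both arguments are equivalent here.
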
 
\ifbool{conf}{}{
\ifbool{conf}{\begin{pf}}{
\begin{proof}}
By inspection of the hybrid system data $(C_{\varepsilon}, F_{\varepsilon}, D_{\varepsilon}, G_{\varepsilon})$ defining $\HS_{\varepsilon}$ given in (\ref{eqn:Hy2}), the following is observed:
\begin{itemize}
\item The set $C_{\varepsilon}$ is a closed subset of $\reals^m$ since  $C_{\varepsilon}$ is the union of the sets $C_{\varepsilon_1}$ and $C_{\varepsilon_2}$, both of which are the Cartesian product of closed sets.  Similar arguments show that $D_{\varepsilon}$ is closed since it can be written as  the finite union of closed sets, that is, 

\begin{align*}
D_{\varepsilon} & = \bigcup_{p \in \mathcal{P}} \big ( \reals^2 \times \reals \times \reals \times \reals \times \reals \times \{0\} \times \mathbb{R}^6 \times \mathbb{R}^6 \times \{p\} \times \mathcal{Q} \big )
\end{align*}

Thus, (A1) holds.
\item  The function $F_{\varepsilon}: \mathcal{X}_{\varepsilon} \rightarrow \mathcal{X}_{\varepsilon}$ is linear affine in the state and thus continuous on $C_{\varepsilon}$. Thus, (A2) holds.
\item  To show that the set-valued map $G_{\varepsilon}$ defined in (\ref{eqn:Hy2_G}) satisfies (A3), 
observe that by inspection,  for each $i \in \{1,2,3,4,5,6\}$ $G_{\varepsilon_i}$ is a continuous map. Moreover, for each $i \in \{1,2,3,4,5,6\}$ $D_{\varepsilon_i}$ is closed and 
$$D_{\varepsilon_i} \cap D_{\varepsilon_k} = \emptyset \hspace{1cm} \forall i,k \in \{1,2,3,4,5,6\}, i \neq k$$
which implies that there is a (uniform) finite separation between these sets. This is due to the fact that these sets are defined for different values of the logic variables.  Hence, (A3) holds  as $G_{\varepsilon}$ is a piecewise function with each piece being continuous. 
\end{itemize}
\ifbool{conf}{\hfill $\square$ \end{pf}}{
\end{proof}}}

\begin{lemma} \label{lem:hy_comp}
For every $\xi \in C_\varepsilon \cup D_\varepsilon ( = \mathcal{X}_{\varepsilon})$, there exists at least one nontrivial solution $\phi$ to $\HS_\varepsilon$ such that $\phi(0,0) = \xi$. Moreover, every maximal solution to $\HS_{\varepsilon}$ is complete.
\end{lemma}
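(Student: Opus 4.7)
The plan is to apply Proposition 6.10 of \cite{4}, which gives sufficient conditions for existence of a nontrivial solution from every initial condition in $C_\varepsilon \cup D_\varepsilon$ and for completeness of every maximal solution. Lemma \ref{lem:hbcs} already provides the hybrid basic conditions, so what remains is to verify a viability-type condition on the flow set and then rule out the three standard failure modes that could prevent a maximal solution from being complete.

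First I would establish the viability condition: for every $\xi \in C_\varepsilon \setminus D_\varepsilon$ one has $F_\varepsilon(\xi) \in T_{C_\varepsilon}(\xi)$. Since $D_\varepsilon$ contains only the points with $\tau = 0$, any $\xi \in C_\varepsilon \setminus D_\varepsilon$ has $\tau > 0$, placing $\tau$ in the interior of either $[0,c]$ (when $q=0$) or $[0,d]$ (when $q=1)$. The flow map is constant or linear in every component, and only the timer has an active one-sided constraint, namely $\dot{\tau} = -1 < 0$, which points into the set since $\tau$ is bounded below by $0$. All other components (the errors $\varepsilon$, clocks $\tau_i,\tau_k$, rates $a_i,a_k$, memory buffers $\m^i,\m^k$, and logic variables $p,q$) have no active state constraint on $C_\varepsilon$, so the tangent-cone inclusion holds trivially for them. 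Combined with the basic conditions this yields a nontrivial solution from every $\xi \in C_\varepsilon \setminus D_\varepsilon$; from points $\xi \in D_\varepsilon$ a nontrivial solution exists via a jump using $G_\varepsilon$.

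Second, to establish completeness, I would verify that none of items (b), (c), (d) in the completeness part of \cite[Prop. 6.10]{4} can occur. For item (b), finite escape time during flow is precluded because $F_\varepsilon$ is linear affine (the $\varepsilon$-dynamics are driven by the constant matrix $A_f$ with constant forcing $(a_i,a_k)$, and the remaining components are integrators with constant input or identically zero), so solutions to the flow ODE are defined on all of $\reals_{\geq 0}$. For item (c), a flowing solution cannot leave $C_\varepsilon$ through its boundary without entering $D_\varepsilon$: the only boundary reached along flow is $\tau = 0$ (since $\dot{\tau}=-1$ and $\tau$ starts in $[0,c]$ or $[0,d]$), and $\tau = 0$ coincides exactly with the defining condition of $D_{\varepsilon_i}$ for the current value of $p$. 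For item (d), inspecting each $G_{\varepsilon_i}$ in (\ref{eqn:Hy2_G}) shows that after a jump $\tau$ is reset to $c$ or $d$, the logic variable $p$ is updated to a value still in $\mathcal{P}$, and $q$ is updated to a value in $\mathcal{Q}$; hence $G_\varepsilon(D_\varepsilon) \subset C_\varepsilon \subset C_\varepsilon \cup D_\varepsilon$.

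The main obstacle is the bookkeeping in item (d): one must walk through all six jump maps and confirm, in each case, that the updated values of $(\tau,p,q)$ land in the allowed ranges prescribed by $C_\varepsilon$, since that is what guarantees that a nontrivial flow (or further jump) can be initiated after every discrete transition and therefore rules out jump-induced termination. Since each $G_{\varepsilon_i}$ explicitly hard-codes $\tau^+ \in \{c,d\}$ together with a matched $q^+$ and an incremented $p^+$ (modulo $6$ at $G_{\varepsilon_6}$), this verification reduces to direct inspection, and combining it with the absence of finite escape time and with the viability condition yields both existence and completeness, as claimed.
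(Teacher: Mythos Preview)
Your proposal is correct and follows essentially the same approach as the paper: invoke \cite[Proposition 6.10]{4}, check the viability condition $F_\varepsilon(\xi)\in T_{C_\varepsilon}(\xi)$ on $C_\varepsilon\setminus D_\varepsilon$, and then rule out the non-completeness alternatives by observing that $F_\varepsilon$ is linear affine (hence no finite escape time) and that $G_\varepsilon(D_\varepsilon)\subset C_\varepsilon\cup D_\varepsilon$. The paper differs only in presentation, computing $T_{C_\varepsilon}(\xi)$ explicitly for the three cases $\tau=0$, $\tau\in(0,d)$, $\tau=d$ rather than arguing informally as you do; note in particular that points with $\tau=c$ (when $q=0$) or $\tau=d$ (when $q=1$) lie in $C_\varepsilon\setminus D_\varepsilon$ and have the \emph{upper} bound active, so your phrasing about $\dot\tau=-1$ ``pointing into the set since $\tau$ is bounded below by $0$'' is slightly off---the relevant constraint there is the upper one, and $\dot\tau=-1\le 0$ is what you need---but the conclusion is unaffected.
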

\ifbool{conf}{}{
\ifbool{conf}{\begin{pf}}{
\begin{proof}}
Consider an arbitrary $\xi \in C_{\varepsilon} \cup D_{\varepsilon}$. The tangent cone $T_{C_{\varepsilon}}(\xi)$, as defined in \cite[Definition 5.12]{4}, given by
\begin{equation*}
\begin{aligned}
T_{C_{\varepsilon}}(\xi) {=} \begin{cases} \reals^2 \times \reals \times \reals \times \reals \times \reals \times \reals_{\geq 0} \times \mathbb{R}^6 \times \mathbb{R}^6 \times \mathcal{P} \times \mathcal{Q} & \mbox{if } \xi \in \mathcal{X}^1_{\varepsilon} \\ 
\reals^2 \times \reals \times \reals \times \reals \times \reals \times \reals \times \mathbb{R}^6 \times \mathbb{R}^6 \times \mathcal{P} \times \mathcal{Q} & \mbox{if } \xi \in \mathcal{X}^2_{\varepsilon} \\
\reals^2 \times \reals \times \reals \times \reals \times \reals \times \reals_{\leq 0} \times \mathbb{R}^6 \times \mathbb{R}^6 \times \mathcal{P} \times \mathcal{Q} & \mbox{if } \xi \in \mathcal{X}^3_{\varepsilon}
\end{cases}
\end{aligned}
\end{equation*}
\noindent
where $\mathcal{X}_{\varepsilon}^1 := \{ x_{\varepsilon} \in \mathcal{X}_{\varepsilon} : \tau = 0 \}$, $\mathcal{X}_{\varepsilon}^2 := \{ x_{\varepsilon} \in \mathcal{X}_{\varepsilon} : \tau \in (0, d) \}$, and $\mathcal{X}_{\varepsilon}^3 := \{ x_{\varepsilon} \in \mathcal{X}_{\varepsilon} : \tau = d \}$. By inspection, $F_{\varepsilon}(x_{\varepsilon}) \cap T_{C_{\varepsilon}}(x_{\varepsilon}) \neq \emptyset$ holds  for every $x_{\varepsilon} \in C_{\varepsilon} \setminus D$. Then, by \cite[Proposition 6.10]{4}, there exists a nontrivial solution $\phi$ to $\HS_{\varepsilon}$ with $\phi(0,0) = \xi$. Moreover,  by the same result,  every $\phi \in \mathcal{S}_{\HS_{\varepsilon}}$ satisfies one of the following conditions:
\begin{enumerate}
\setlength{\itemindent}{7mm}
\item[a)] $\phi$ is complete;
\item[b)] $\mbox{dom } \phi$ is bounded and the interval $I^J$, where $J = \mbox{sup}_{j} \mbox{dom } \phi$, has nonempty interior and $t \mapsto \phi(t,J)$ is a maximal solution to $\dot{x} \in F(x)$, in fact $\lim_{t \to T} |\phi(t,J)| = \infty $, where $T = \mbox{sup}_{t} \mbox{dom } \phi$;
\item[c)] $\phi(T,J) \notin C \cup D$, where $(T,J) = \mbox{sup dom } \phi$.
\end{enumerate}
\noindent
Now, since $G_{\varepsilon}(D_{\varepsilon}) \subset C_{\varepsilon} \cup D_{\varepsilon}$ case (c) does not occur. Additionally, one can eliminate case (b) since, by inspection, $F_{\varepsilon}$ is Lipschitz continuous on $C_{\varepsilon}$.
\ifbool{conf}{ \hfill $\square$ \end{pf}}{
\end{proof}}}

The effectiveness of the update laws $K_{\tilde{o}}$ and $K_{a}$, given in (\ref{eqn:offset_law2}) and (\ref{eqn:skew_law2}), in correcting the clock and clock rate of Node $k$, depend on the assigned values of $\m^i$ and $\m^k$ at the time $K_{\tilde{o}}$ and $K_{a}$, i.e., when jumps according to $G_{\varepsilon_6}$ occur.  Improper initialization of the memory states may result in updates of the offset and clock rate of Node $k$ that increase the error in the clocks and clock rates relative to Node $i$.  Therefore, to facilitate the analysis of $\HS_{\varepsilon}$ in rendering the set $\A_{\varepsilon}$ asymptotically attractive, we restrict the values of $\m^i$ and $\m^k$  to a set smaller than $\mathcal{X}$ where they remain  in forward (hybrid) time.  More precisely, we restrict the state $x_{\varepsilon}$ to the set 
\begin{equation} \label{set:M}
\mathcal{M} := \mathcal{M}_1 \cup \mathcal{M}_2 \cup \mathcal{M}_3 \cup \mathcal{M}_4 \cup \mathcal{M}_5 \cup \mathcal{M}_6
\end{equation}
\noindent
where
\ifbool{conf}{
\begin{equation*} 
\begin{aligned}
\mathcal{M}_1 & := \{ x_{\varepsilon} \in \mathcal{X}_{\varepsilon} : p {=} 0, q {=} 0 \} \\
\mathcal{M}_2 & := \{ x_{\varepsilon} \in \mathcal{X}_{\varepsilon} : p {=} 1, q {=} 1, \m_1^i {\minus} \rho_i(x_{\varepsilon}, 0)  = 0 \} \\
\mathcal{M}_3 & := \{ x_{\varepsilon} \in \mathcal{X}_{\varepsilon} : p {=} 2, q {=} 0, \m_1^k {\minus} \rho_k(x_{\varepsilon}, 0)  = 0, \\ 
& \hspace{7.5mm} \m_2^k {\minus} \rho_i(x_{\varepsilon}, d) = 0 \} \\
\mathcal{M}_4 & := \{ x_{\varepsilon} \in \mathcal{X}_{\varepsilon} : p {=} 3, q {=} 1, \m_1^k {\minus} \rho_k(x_{\varepsilon}, 0) = 0, \\
& \hspace{7.5mm} \m_2^k {\minus} \rho_k(x_{\varepsilon}, c) = 0, \m_3^k {\minus} \rho_i(x_{\varepsilon}, c{+}d) = 0 \} \\
\mathcal{M}_5 & := \{ x_{\varepsilon} \in \mathcal{X}_{\varepsilon} : p {=} 4, q {=} 0, \m_1^i {\minus} \rho_i(x_{\varepsilon}, 0) = 0, \\
& \hspace{7.5mm} \m_2^i {\minus} \rho_k(x_{\varepsilon}, d) = 0, \m_3^i {\minus} \rho_k(x_{\varepsilon},  c{+}d) = 0, \\
& \hspace{7.5mm}  \m_4^i {\minus} \rho_i(x_{\varepsilon}, 2d{+}c) {=} 0 \} \\
\mathcal{M}_6 & := \{ x_{\varepsilon} \in \mathcal{X}_{\varepsilon} : p {=} 5, q {=} 1, \m_1^i {\minus} \rho_i(x_{\varepsilon}, 0)  = 0,  \\
& \hspace{7.5mm} \m_2^i {\minus} \rho_i(x_{\varepsilon}, c) = 0, \m_3^i {-} \rho_k(x_{\varepsilon},c{+}d) = 0, \\
& \hspace{7.5mm}  \m_4^i {\minus} \rho_k(x_{\varepsilon},2c{+}d) = 0, \m_5^i {\minus} \rho_i(x_{\varepsilon}, 2c{+}2d) = 0 \}
\end{aligned}
\end{equation*}
}{
\begin{equation*} 
\begin{aligned}
\mathcal{M}_1 & := \{ x_{\varepsilon} \in \mathcal{X}_{\varepsilon} : p {=} 0, q {=} 0 \} \\
\mathcal{M}_2 & := \{ x_{\varepsilon} \in \mathcal{X}_{\varepsilon} : p {=} 1, q {=} 1, \m_1^i {\minus} \rho_i(x_{\varepsilon}, 0)  = 0 \} \\
\mathcal{M}_3 & := \{ x_{\varepsilon} \in \mathcal{X}_{\varepsilon} : p {=} 2, q {=} 0, \m_1^k {\minus} \rho_k(x_{\varepsilon}, 0)  = 0, \\ 
& \hspace{7.5mm} \m_2^k {\minus} \rho_i(x_{\varepsilon}, d) = 0 \} \\
\mathcal{M}_4 & := \{ x_{\varepsilon} \in \mathcal{X}_{\varepsilon} : p {=} 3, q {=} 1, \m_1^k {\minus} \rho_k(x_{\varepsilon}, 0) = 0, \\
& \hspace{7.5mm} \m_2^k {\minus} \rho_k(x_{\varepsilon}, c) = 0, \m_3^k {\minus} \rho_i(x_{\varepsilon}, c{+}d) = 0 \} \\
\mathcal{M}_5 & := \{ x_{\varepsilon} \in \mathcal{X}_{\varepsilon} : p {=} 4, q {=} 0, \m_1^i {\minus} \rho_i(x_{\varepsilon}, 0) = 0, \\
& \hspace{7.5mm} \m_2^i {\minus} \rho_k(x_{\varepsilon}, d) = 0, \m_3^i {\minus} \rho_k(x_{\varepsilon},  c{+}d) = 0, \\
& \hspace{7.5mm}  \m_4^i {\minus} \rho_i(x_{\varepsilon}, 2d{+}c) {=} 0 \} \\
\mathcal{M}_6 & := \{ x_{\varepsilon} \in \mathcal{X}_{\varepsilon} : p {=} 5, q {=} 1, \m_1^i {\minus} \rho_i(x_{\varepsilon}, 0)  = 0,  \\
& \hspace{7.5mm} \m_2^i {\minus} \rho_i(x_{\varepsilon}, c) = 0, \m_3^i {-} \rho_k(x_{\varepsilon},c{+}d) = 0, \\
& \hspace{7.5mm}  \m_4^i {\minus} \rho_k(x_{\varepsilon},2c{+}d) = 0, \m_5^i {\minus} \rho_i(x_{\varepsilon}, 2c{+}2d) = 0 \}
\end{aligned}
\end{equation*}
}
\noindent
and
\begin{equation} 
\begin{aligned}
\rho_i(x_{\varepsilon}, \beta) & :=\tau_{i} - a_{i}( (1-q)c+qd - \tau) - a_{i} \beta \\
\rho_k(x_{\varepsilon}, \beta) & := \tau_{k} - a_{k}( (1-q)c+qd - \tau) - a_{k} \beta
\end{aligned}
\end{equation}
\noindent
for $\beta \geq 0$.

\begin{lemma} \label{lem:fwd_inv_M}
The set $\mathcal{M}$ is forward invariant for the hybrid system $\HS_{\varepsilon}$.
\end{lemma}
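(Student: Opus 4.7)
The plan is to show forward invariance by verifying the two standard ingredients separately: that flow from any point in $\mathcal{M} \cap C_{\varepsilon}$ keeps the state in $\mathcal{M}$, and that every jump from $\mathcal{M} \cap D_{\varepsilon}$ lands in $\mathcal{M}$. Since the $\mathcal{M}_k$'s are mutually disjoint (each fixes a distinct value of $p$), and each jump set $D_{\varepsilon_j}$ selects exactly the logic value $p = j-1$ used to define $\mathcal{M}_j$, it suffices to analyze flow on each $\mathcal{M}_k$ and the jump $G_{\varepsilon_j}$ restricted to $\mathcal{M}_j \cap D_{\varepsilon_j}$ for $j \in \{1,2,3,4,5,6\}$.

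For the flow step, the key observation is that along $F_{\varepsilon}$ the logic variables $p,q$, the clock rates $a_i, a_k$, and the memory buffers $\m^i, \m^k$ are all constant, while $\tau_i, \tau_k$ grow at rates $a_i, a_k$ and $\tau$ decreases at rate $1$. A direct differentiation of the defining functions $\rho_i$ and $\rho_k$ along solutions shows
\begin{equation*}
\frac{d}{dt} \rho_i(x_{\varepsilon},\beta) = a_i - a_i \cdot 1 = 0, \qquad \frac{d}{dt} \rho_k(x_{\varepsilon},\beta) = a_k - a_k \cdot 1 = 0,
\end{equation*}
so every equality of the form $\m_\ell^{\#} - \rho_\#(x_\varepsilon,\beta) = 0$ appearing in any $\mathcal{M}_k$ is preserved under flow. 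Hence $\mathcal{M}_k \cap C_{\varepsilon}$ is flow-invariant for each $k$.

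For the jump step I would check the six transitions one at a time; the pattern is the same in each case. For a point $x_{\varepsilon} \in \mathcal{M}_j \cap D_{\varepsilon_j}$ one has $\tau = 0$ and the memory conditions of $\mathcal{M}_j$ hold; after applying $G_{\varepsilon_j}$ one substitutes the updated values ($p \to p{+}1$, $q$ flipped, $\tau$ reset to $c$ or $d$, one slot of $\m^i$ or $\m^k$ written with $\tau_i$ or $\tau_k$, the rest shifted) into the definition of $\mathcal{M}_{j+1}$ (or $\mathcal{M}_1$ when $j = 6$) and uses the identities $\rho_i(x_\varepsilon^+, \beta) = \tau_i - a_i((1-q^+)c + q^+ d - \tau^+) - a_i\beta$ to show the newly written memory entry satisfies the corresponding equality, while the previously satisfied equalities are simply relabeled to the new slot. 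For instance, for $G_{\varepsilon_1}$ one has $q^+ = 1$, $\tau^+ = d$, so $\rho_i(x_\varepsilon^+,0) = \tau_i$, matching $\m_1^{i,+} = \tau_i$. The case $j = 6$ is automatic since $\mathcal{M}_1$ imposes no memory conditions, and the feedback corrections $K_{\tilde{o}}$ and $K_a$ are therefore immaterial to invariance (they will matter only for the forthcoming convergence analysis).

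The main obstacle is purely bookkeeping: keeping track of which slot of $\m^i$ or $\m^k$ stores which timestamp after each shift-and-write operation, and consistently propagating the time offsets $\beta$ through $\rho_i, \rho_k$ as $\tau$ is reset and the protocol advances. Once a single transition is carried out carefully, the remaining five reduce to the same computation with relabeled indices, so I would present the argument as one detailed verification (say for $G_{\varepsilon_2}$, which exercises both $\rho_i$ and $\rho_k$) and note that the others follow identically.
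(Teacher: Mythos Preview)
Your proposal is correct and mirrors the paper's proof almost exactly: the paper also splits into a flow part (showing $\dot\rho_i=\dot\rho_k=0$ along $F_\varepsilon$ so the memory constraints are preserved) and a jump part (verifying case by case that $G_{\varepsilon_j}(\mathcal{M}_j\cap D_{\varepsilon_j})\subset \mathcal{M}_{j+1}$, with $\mathcal{M}_7:=\mathcal{M}_1$). The only difference is that the paper carries out all six jump verifications explicitly rather than doing one representative case and invoking the evident pattern.
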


\ifbool{conf}{}{
\ifbool{conf}{\begin{pf}}{
\begin{proof}}
Pick an initial condition $\phi(0,0) \in \mathcal{M}$. \begin{itemize}
\item If  $\phi(0,0) \in \mathcal{M} \cap ( C_{\varepsilon} \setminus D_{\varepsilon})$,  then the solution initially flows according to  $\dot{x}_{\varepsilon} = F_{\varepsilon}(x)$.  Observe that the trajectories of $\m^i$, $\m^k$, $p$, and $q$ remain constant since  $F_{\varepsilon}$ is defined so that  $\dot{\m}^i = \dot{\m}^k = \dot{p} = \dot{q} = 0$. Moreover,  note that the gradient of $\rho_i$ and $\rho_k$ with respect to $x_{\varepsilon} = (\varepsilon, \tau_{i}, \tau_{k}, a_{i}, a_{k}, \tau, \m^i, \m^k, p, q)$ satisfy 
\ifbool{conf}{ 
\begin{equation}
\begin{aligned}
& \nabla_{x_\varepsilon} \rho_i(x_{\varepsilon}, \beta) = \begin{bmatrix}
0_{2 \times 1} \\
1 \\
0 \\
\tau - \beta - dq + c(q - 1) \\
0 \\
a_i \\ 
0 \\
0 \\
0 \\
a_i(c - d)
\end{bmatrix} 
\\ & \nabla_{x_\varepsilon} \rho_k(x_{\varepsilon}, \beta) = \begin{bmatrix}
0 \\
0 \\
1 \\
0 \\
\tau - \beta - dq + c(q - 1) \\
a_k \\
0 \\
0 \\
0 \\
a_k(c - d)
\end{bmatrix} 
\end{aligned}
\end{equation}
}{
 \begin{equation} 
\begin{aligned}
\hspace{-10mm} \nabla_{x_\varepsilon} \rho_i(x_{\varepsilon}, \beta) = \begin{bmatrix}
\textbf{0}_{2 \times 1} \\
1 \\
0 \\
\tau - \beta - dq + c(q - 1) \\
0 \\
a_i \\ 
\textbf{0}_{6 \times 1} \\
\textbf{0}_{6 \times 1}  \\
0 \\
a_i(c - d)
\end{bmatrix}, 
& & \nabla_{x_\varepsilon} \rho_k(x_{\varepsilon}, \beta) = \begin{bmatrix}
\textbf{0}_{2 \times 1} \\
0 \\
1 \\
0 \\
\tau - \beta - dq + c(q - 1) \\
a_k \\
\textbf{0}_{6 \times 1} \\
\textbf{0}_{6 \times 1}  \\
0 \\
a_k(c - d)
\end{bmatrix} 
\end{aligned}
\end{equation}
}
\noindent
Then one has $\dot{\rho}_i(x_{\varepsilon}, \beta) = \langle \nabla \rho_i(x_{\varepsilon}, \beta), F_{\varepsilon}(x_{\varepsilon}) \rangle = 1 a_i + a_i (-1) = 0$ and $\dot{\rho}_k(x_{\varepsilon}, \beta) = \langle \nabla \rho_k(x_{\varepsilon}, \beta), F_{\varepsilon}(x_{\varepsilon}) \rangle = 1 a_k + a_k (-1) = 0$. Therefore,  when $\phi$ initially flows from a point in $\mathcal{M}$,  it remains in $\mathcal{M}$  over the interval of flow. This property holds for every solution over any of its intervals of flows  that starts at a point in $\mathcal{M}$.   

\item If  $\phi(0,0) \in \mathcal{M} \cap D_{\varepsilon}$,  then the solution initially jumps according to $x_{\varepsilon}^+ = G_{\varepsilon}(x_{\varepsilon})$. In particular, 
\begin{itemize}

\item if  $\phi(0,0) \in \mathcal{M}_1 \cap D_{\varepsilon_1}$,  the solution jumps according to $x_{\varepsilon}^+ = G_{\varepsilon_1}(x_\varepsilon)$. The timer $\tau$ resets according to  $\tau^+ = d$ while $q^+ = 1$ and $p^+ = 1$. Moreover, $(\m_1^i)^+$ is assigned to the value of $\tau_i$, evaluating $\rho_i(x_{\varepsilon}^+, 0)$, we have that for each $x_{\varepsilon} \in D_{\varepsilon_1}$
\begin{align*}
\rho_i(x_{\varepsilon}^+, 0) & = \tau_{i} - a_{i}( (1-q^+)c+q^+ d - \tau^+) - a_{i} 0 \\
& = \tau_{i} - a_{i}( (1-1)c+ d - d) \\
& = \tau_{i}
\end{align*} 

Thus, by recalling the definition of $\mathcal{M}_2 = \{ x_{\varepsilon} \in \mathcal{X}_{\varepsilon} : p {=} 1, q {=} 1, \m_1^i {\minus} \rho_i(x_{\varepsilon}, 0)  = 0 \}$, we have that $G_{\varepsilon_1}(\mathcal{M}_1 \cap D_{\varepsilon}) \subset \mathcal{M}_2$ holds for each $x_{\varepsilon} \in D_{\varepsilon_1}$.

\item if  $\phi(0,0) \in \mathcal{M}_2 \cap D_{\varepsilon}$,  the solution jumps according to $x_{\varepsilon}^+ = G_{\varepsilon_2}(x_\varepsilon)$. The timer $\tau$ resets according to  $\tau^+ = c$ while $q^+ = 0$ and $p^+ = 2$.  Then, by definition of $G_{\varepsilon_2}$, for each $x_{\varepsilon} \in D_{\varepsilon_2}$, one has
\begin{equation*} 
\begin{aligned} 
\rho_k(x_{\varepsilon}^+, 0) & = \tau_{k} - a_{k}( (1-q^+)c+q^+ d - \tau^+) - a_{k} 0 \\
& = \tau_{k} - a_{k}( (1-0)c - c) \\
& = \tau_{k} \\
\end{aligned}
\end{equation*}
\noindent
which is equal to $(\m_1^k)^+$ and
\begin{equation*} 
\begin{aligned} 
\rho_i(x_{\varepsilon}^+, d) & = \tau_{i} - a_{i}( (1-q^+)c+q^+ d - \tau^+) - a_{i} d \\
& = \tau_{i} - a_{i}( (1-0)c - c) - a_{i} d \\
& = \tau_{i} - a_i d
\end{aligned}
\end{equation*}
\noindent
which is equal to $(\m_2^k)^+ = \m_1^i $.  Therefore, by recalling the definition $\mathcal{M}_3 = \{ x_{\varepsilon} \in \mathcal{X}_{\varepsilon} : p {=} 2, q {=} 0, \m_1^k {\minus} \rho_k(x_{\varepsilon}, 0)  = 0, \m_2^k {\minus} \rho_i(x_{\varepsilon}, d) = 0 \}$, we have $G_{\varepsilon_2}(\mathcal{M}_2 \cap D_{\varepsilon}) \subset \mathcal{M}_3$ for each $x_{\varepsilon} \in D_{\varepsilon_2}$. 

\item if  $\phi(0,0) \in \mathcal{M}_3 \cap D_{\varepsilon}$,  the solution jumps according to $x_{\varepsilon}^+ = G_{\varepsilon_3}(x_\varepsilon)$. The timer $\tau$ resets according to  $\tau^+ = d$ while $q^+ = 1$ and $p^+ = 3$.  Then, by definition of $G_{\varepsilon_3}$, for each $x_{\varepsilon} \in D_{\varepsilon_3}$, one has
\begin{equation*} 
\begin{aligned}
\rho_k(x_{\varepsilon}^+, 0) & = \tau_{k} - a_{k}( (1-q^+)c+qd - \tau^+) - a_{k} 0 \\
& = \tau_{k} - a_{k}( (1-1)c + d - d) \\
& = \tau_{k} 
\end{aligned}
\end{equation*}
which is equal to $(\m_1^k)^+$, 
\begin{equation*} 
\begin{aligned}
\rho_k(x_{\varepsilon}^+, c) & = \tau_{k} - a_{k}( (1-q^+)c+qd - \tau^+) - a_{k} c \\
& = \tau_{k} - a_{k}( (1-1)c + d - d) - a_{k} c \\
& =  \tau_{k} - a_{k} c 
\end{aligned}
\end{equation*}
which is equal to $(\m_2^k)^+ = \m_1^k$, and
\begin{equation*} 
\begin{aligned}
\rho_i(x_{\varepsilon}^+, c+d) & = \tau_{i} - a_{i}( (1-q^+)c+qd - \tau^+) - a_{i} (c + d) \\
& = \tau_{i} - a_{i}( (1-1)c + d - d) - a_{i} (c + d) \\
& = \tau_{i} - a_{i} (c + d)
\end{aligned}
\end{equation*}
\noindent
which is equal to $(\m_3^k)^+ = \m_2^k $. Therefore, by recalling the definition $\mathcal{M}_4 = \{ x_{\varepsilon} \in \mathcal{X}_{\varepsilon} : p {=} 3, q {=} 1, \m_1^k {\minus} \rho_k(x_{\varepsilon}, 0) = 0, \m_2^k {\minus} \rho_k(x_{\varepsilon}, c) = 0, \m_3^k {\minus} \rho_i(x_{\varepsilon}, c{+}d) = 0 \}$, we have $G_{\varepsilon_3}(\mathcal{M}_3 \cap D_{\varepsilon}) \subset \mathcal{M}_4$ for each $x_{\varepsilon} \in D_{\varepsilon_3}$.

\item if  $\phi(0,0) \in \mathcal{M}_4 \cap D_{\varepsilon}$,  the solution jumps according to $x_{\varepsilon}^+ = G_{\varepsilon_4}(x_\varepsilon)$. The timer $\tau$ resets according to  $\tau^+ = c$ while $q^+ = 0$ and $p^+ = 4$.  Then, by definition of $G_{\varepsilon_4}$, for each $x_{\varepsilon} \in D_{\varepsilon_4}$, one has
\begin{equation*} 
\begin{aligned}
\rho_i(x_{\varepsilon}^+, 0) & = \tau_{i} - a_{i}( (1-q^+)c+qd - \tau^+) - a_{i} \beta \\
& = \tau_{i} - a_{i}( (1-0)c  - c) \\
& = \tau_i 
\end{aligned}
\end{equation*}
which is equal to $(\m_1^k)^+$, 
\begin{equation*} 
\begin{aligned}
\rho_k(x_{\varepsilon}^+, d) & = \tau_{k} - a_{k}( (1-q^+)c+qd - \tau^+) - a_{k} d \\
& = \tau_{k} - a_{k}( (1-0)c - c) - a_{k} d \\
& =  \tau_{k} - a_{k} d
\end{aligned}
\end{equation*}
which is equal to $(\m_2^i)^+ = \m_1^k$, 
\begin{equation*} 
\begin{aligned}
\rho_k(x_{\varepsilon}^+, c + d) & = \tau_{k} - a_{k}( (1-q^+)c+qd - \tau^+) - a_{k} (c+d) \\
& = \tau_{k} - a_{k}(  (1-0)c  - c) - a_{k} (c + d) \\
& = \tau_{k} - a_{k} (c + d)
\end{aligned}
\end{equation*}
which is equal to $(\m_3^i)^+ = \m_2^k$, 
\begin{equation*} 
\begin{aligned}
(\m_4^i)^+ & = \m_3^k =   \rho_i(x_{\varepsilon}^+, c+2d)  \\
& = \tau_{i} - a_{i}( (1-q^+)c+qd - \tau^+) - a_{i} (c+2d) \\
& = \tau_{i} - a_{i}(  (1-0)c  - c) - a_{i} (c + 2d) \\
& = \tau_{i} - a_{i} (c + 2d)
\end{aligned}
\end{equation*}
\noindent
which is equal to $(\m_4^i)^+ = \m_3^k$. Therefore, by recalling the definition $\mathcal{M}_5 = \{ x_{\varepsilon} \in \mathcal{X}_{\varepsilon} : p {=} 4, q {=} 0, \m_1^i {\minus} \rho_i(x_{\varepsilon}, 0) = 0, \m_2^i {\minus} \rho_k(x_{\varepsilon}, d) = 0, \m_3^i {\minus} \rho_k(x_{\varepsilon},  c{+}d) = 0, \m_4^i {\minus} \rho_i(x_{\varepsilon}, c{+}2d) {=} 0 \}$, we have $G_{\varepsilon_4}(\mathcal{M}_4 \cap D_{\varepsilon}) \subset \mathcal{M}_5$ for each $x_{\varepsilon} \in D_{\varepsilon_4}$.

\item if  $\phi(0,0) \in \mathcal{M}_5 \cap D_{\varepsilon}$,  the solution jumps according to $x_{\varepsilon}^+ = G_{\varepsilon_5}(x_\varepsilon)$. The timer $\tau$ resets according to  $\tau^+ = d$ while $q^+ = 1$ and $p^+ =5$.  Then, by definition of $G_{\varepsilon_5}$, for each $x_{\varepsilon} \in D_{\varepsilon_5}$, one has
\begin{equation*} 
\begin{aligned}
\rho_i(x_{\varepsilon}^+, 0) & = \tau_{i} - a_{i}( (1-q^+)c+qd - \tau^+) - a_{i} 0 \\
& = \tau_{i} - a_{i}( (1-1)c + d - d) \\
& = \tau_i \\
\end{aligned}
\end{equation*}
which is equal to $(\m_1^k)^+$, 
\begin{equation*} 
\begin{aligned}
\rho_i(x_{\varepsilon}^+, c) & = \tau_{k} - a_{k}( (1-q^+)c+qd - \tau^+) - a_{k} 0 \\
& = \tau_{k} - a_{k}( (1-1)c + d - d) - a_k c \\
& =  \tau_{k} - a_{k} c \\
\end{aligned}
\end{equation*}
which is equal to $(\m_2^i)^+ = \m_1^i$, 
\begin{equation*} 
\begin{aligned}
\rho_k(x_{\varepsilon}^+, c+d) & = \tau_{k} - a_{k}( (1-q^+)c+qd - \tau^+) - a_{k} (c+d) \\
& = \tau_{k} - a_{k}( (1-1)c + d - d) - a_k (c+d) \\
& = \tau_{k} - a_{k} (c + d) \\
\end{aligned}
\end{equation*}
which is equal to $(\m_3^i)^+ = \m_2^i$, 
\begin{equation*} 
\begin{aligned}
\rho_k(x_{\varepsilon}^+, 2c+d) & = \tau_{k} - a_{k}( (1-q^+)c+qd - \tau^+) - a_{k} (2c+d) \\
& = \tau_{k} - a_{k}( (1-1)c + d - d) - a_k (2c+d) \\
& = \tau_{k} - a_{k} (2c + d) \\
\end{aligned}
\end{equation*}
which is equal to $(\m_4^i)^+ = \m_3^i$, 
\begin{equation*} 
\begin{aligned}
\rho_i(x_{\varepsilon}^+, 2c+2d) & = \tau_{i} - a_{i}( (1-q^+)c+qd - \tau^+) - a_{i} (2c+2d) \\
& = \tau_{i} - a_{i}( (1-1)c + d - d) - a_{i} (2c+2d) \\
& = \tau_{i} - a_{i} (2c + 2d)
\end{aligned}
\end{equation*}
\noindent
which is equal to $(\m_5^i)^+ = \m_4^i$. Therefore, by recalling the definition $\mathcal{M}_6 = \{ x_{\varepsilon} \in \mathcal{X}_{\varepsilon} : p {=} 5, q {=} 1, \m_1^i {\minus} \rho_i(x_{\varepsilon}, 0)  = 0, \m_2^i {\minus} \rho_i(x_{\varepsilon}, c) = 0, \m_3^i {-} \rho_k(x_{\varepsilon},c{+}d) = 0, \m_4^i {\minus} \rho_k(x_{\varepsilon},2c{+}d) = 0, \m_5^i {\minus} \rho_i(x_{\varepsilon}, 2c{+}2d) = 0 \}$, we have $G_{\varepsilon_5}(\mathcal{M}_5 \cap D_{\varepsilon}) \subset \mathcal{M}_6$ for each $x_{\varepsilon} \in D_{\varepsilon_5}$.

\item if $\phi(0,0) \in \mathcal{M}_6 \cap D_{\varepsilon}$, the solution jumps according to $x_{\varepsilon}^+ = G_{\varepsilon_6}(x_\varepsilon)$. The timer $\tau$ resets according to  $\tau^+ = c$ while $q^+ = 0$ and $p^+ = 0$. Therefore, by recalling the definition $\mathcal{M}_1  = \{ x_{\varepsilon} \in \mathcal{X}_{\varepsilon} : p {=} 0, q {=} 0 \}$, we have $G_{\varepsilon_5}(\mathcal{M}_6 \cap D_{\varepsilon}) \subset \mathcal{M}_1$  for each $x_{\varepsilon} \in D_{\varepsilon_6}$.
\end{itemize}
\end{itemize}
\ifbool{conf}{\hfill $\square$ \end{pf}}{
\end{proof}}}

\begin{lemma} \label{lem:finite_M}
Let constants $d \geq c > 0$ be given. For each maximal solution $\phi$ to $\HS_{\varepsilon}$, there exists $T^* \geq 0$ such that  $\phi(t,j) \in \mathcal{M}$ for any $(t,j) \in \mbox{dom } \phi$ with $t + j \geq T^*$. 
\end{lemma}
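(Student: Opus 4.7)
The plan is to exploit three structural facts about $\HS_\varepsilon$ that together reduce the claim to reaching $\mathcal{M}_1$, which by definition imposes no constraint on the memory buffers $\m^i, \m^k$: (i) along any solution, the logic variable $p$ advances deterministically by one (modulo $6$) at each jump; (ii) the jump map $G_{\varepsilon_6}$, which is triggered exactly when $p=5$, sets $q^+=0$ as well as $p^+=0$; and (iii) flows are of bounded duration because $\dot\tau = -1$ and $\tau$ lies in $[0,c]$ or $[0,d]$ on the flow set. Combining these, I would show that every maximal solution enters $\mathcal{M}_1 \subset \mathcal{M}$ within at most six jumps, after which Lemma~\ref{lem:fwd_inv_M} guarantees membership in $\mathcal{M}$ for all larger hybrid time.

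First I would bound inter-jump flow durations. Since $\dot\tau = -1$ during flow and the flow set restricts $\tau$ to $[0,c]$ when $q=0$ or to $[0,d]$ when $q=1$, any solution must jump once $\tau$ reaches $0$, and every flow interval has length at most $d$. Completeness of maximal solutions, established in Lemma~\ref{lem:hy_comp}, ensures that this jump is indeed realized and that subsequent jumps continue to occur.

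Next I would track the evolution of the logic state $(p,q)$. A direct inspection of $G_{\varepsilon_1}, \ldots, G_{\varepsilon_6}$ in (\ref{eqn:Hy2_G}) shows that each map increments $p$ by $1$ modulo $6$; in particular $G_{\varepsilon_6}$ performs the wrap $p=5 \mapsto p^+=0$ while simultaneously assigning $q^+=0$. Hence, starting from any admissible initial $p_0 \in \mathcal{P}$, the event ``$p$ returns to $0$ via $G_{\varepsilon_6}$'' occurs after at most six jumps (fewer if $p_0 > 0$), and the post-jump state necessarily satisfies $(p,q) = (0,0)$. Regardless of what values the memory buffers take at that moment, this places the solution in $\mathcal{M}_1 \subset \mathcal{M}$.

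Finally I would combine the two bounds: defining $T^*$ as the first hybrid time at which the solution lies in $\mathcal{M}_1$, Steps 1 and 2 give $T^* \leq 6d + 6$ (six inter-jump flows of length at most $d$ plus six jumps). Lemma~\ref{lem:fwd_inv_M} then yields $\phi(t,j) \in \mathcal{M}$ for every $(t,j) \in \mbox{dom }\phi$ with $t+j \geq T^*$, which is the claimed statement. The only subtle point worth noting is that some admissible initial conditions (such as $p=2$ with $q=1$) do not belong to any $\mathcal{M}_i$ and cannot be shown to reach $\mathcal{M}_{p+1}$ after one jump; the argument circumvents this by not insisting on reaching any specific $\mathcal{M}_i$ first, but rather on the single triggering of $G_{\varepsilon_6}$, whose structure forces entry into $\mathcal{M}_1$ irrespective of prior history.
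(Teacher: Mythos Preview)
Your proposal is correct and follows essentially the same approach as the paper's proof: use completeness (Lemma~\ref{lem:hy_comp}) and the monotone cycling of $p$ to guarantee a jump via $G_{\varepsilon_6}$, which lands the solution in $\mathcal{M}_1$, and then invoke the forward invariance of $\mathcal{M}$ (Lemma~\ref{lem:fwd_inv_M}). Your version is slightly more explicit in bounding the flow intervals and producing the quantitative estimate $T^*\le 6d+6$, and your closing remark about initial conditions outside $\bigcup_i \mathcal{M}_i$ is a nice clarification that the paper leaves implicit.
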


\ifbool{conf}{}{
\ifbool{conf}{\begin{pf}}{
\begin{proof}}
 Pick a solution $\phi \in \mathcal{S}_{\HS_{\varepsilon}}$ with initial condition $\phi(0,0) \in  C_{\varepsilon} \cup D_{\varepsilon}$. Since, the flow map $F_{\varepsilon}$ enforces $\dot{p} = 0$, the $p$ component of $\phi$ remains constant during flows. At jumps, namely, when  $\phi(t,j) \in D_{\varepsilon}$, since for each $\ell \in \{1,2,3,4,5\}$, $G_{\varepsilon_{\ell}}$ enforces that $p^+ = p + 1$, the evolution of $p$ is monotonically increasing in $\{0,1,2,3,4,5\}$ until $p=5$, from where $G_6$ resets $p$ to $0$. In fact, when the solution $\phi$ jumps according to $G_{\varepsilon_{6}}$, we have that $p^+ = 0$ and $q^+ = 0$ resulting in a value for $x_{\varepsilon}$ after the jump that is in $\mathcal{M}_1$. Now, due to the monotonic behavior of $p$ and the completeness of solutions to $\HS_{\varepsilon}$ given by Lemma \ref{lem:hy_comp}, there exists $(t,j) \in \mbox{dom } \phi$ such that $\phi(t,j) = G_{\varepsilon_6} (\phi (t,j) )$. Given such $(t,j)$, let $T^* = t + j$.  Then, given that $G_{\varepsilon_{6}} (\phi(t,j)) \subset \mathcal{M}_1$ and the forward invariance of $\mathcal{M}$ given by Lemma \ref{lem:fwd_inv_M}, we have that $\phi(t,j) \in \mathcal{M}$ for each $(t,j) \in \mbox{dom } \phi$ such that $t + j \geq T^*$.
\ifbool{conf}{\hfill $\square$ \end{pf}}{
\end{proof}}}}

In our main result,  which is presented in the next section,  we show asymptotic attractivity of the synchronization set $\A_{\varepsilon}$ via a Lyapunov analysis on solutions from the initialization set $\mathcal{M}$. 

\ifbool{conf}{
\subsection{Main Results} \label{sec:main}
}{
\section{Main Results} \label{sec:main}
}

\ifbool{conf}{
In this section, we present our main result showing asymptotic attractivity of the synchronization set $\A_{\varepsilon}$ for $\HS_{\varepsilon}$. Consider the following Lyapunov function candidate
}{
In this section, we present our main result showing asymptotic attractivity of the synchronization set $\A_{\varepsilon}$ in (\ref{eqn:set})  for $\HS_{\varepsilon}$. To show this, we present a Lyapunov analysis  along solutions to $\HS$ starting from the set $\mathcal{M}$.  We remind the reader that $\mathcal{M}$ is the set that denotes valid initialization values of the memory state vectors $\m^i$ and $\m^k$ for which the update laws $K_{\tilde{o}}$ and $K_a$  give values to correct the clock rate and offset.  To this end, consider the Lyapunov function candidate}
\ifbool{conf}{
\begin{equation} \label{eqn:lyap_fun}
V(x_{\varepsilon}) = \varepsilon^{\top} e^{A_f^{\top} (\tau + d(5-p))} P e^{A_f (\tau + d(5-p))} \varepsilon
\end{equation}
\noindent
Note that there exist two positive scalars $\alpha_1$ and $\alpha_2$ such that for each $x_{\varepsilon} \in C_{\varepsilon} \cup D_{\varepsilon}$
\begin{align*}
\alpha_1|x_{\varepsilon}|^2_{\A} \leq V(x) \leq \alpha_2|x_{\varepsilon}|^2_{\A}
\end{align*}
}{
\begin{equation} \label{eqn:lyap_fun}
V(x_{\varepsilon}) = \varepsilon^{\top} \exp \big ( A_f^{\top} r(\tau, p, q) \big ) P \exp \big ( A_f r(\tau, p, q)  \big ) \varepsilon
\end{equation}
\noindent
where  $P = P^{\top} \succ 0$, $A_f$ is as given in (\ref{eqn:HyEps_fx}),  $r(\tau,p,q) := \tau  h(q) + d(5 - p)$ and $h(q) := 1 + c^{\minus 1} (1 - q) (d-c)$ are defined for each $x_{\varepsilon} \in C_{\varepsilon} \cup D_{\varepsilon}$.  Note that  there exist two positive scalars, $\alpha_1$ and $\alpha_2$, such that
\begin{equation} \label{eqn:v_bounds}
\begin{aligned}
\alpha_1|x_{\varepsilon}|^2_{\A_{\varepsilon}} \leq V(x_{\varepsilon}) \leq \alpha_2|x_{\varepsilon}|^2_{\A_{\varepsilon}} \quad \forall x_{\varepsilon} \in C_{\varepsilon} \cup D_{\varepsilon}
\end{aligned}
\end{equation}}
\ifbool{conf}{}{
\noindent
The function $V$ satisfies the following infinitesimal properties.


\begin{lemma} \label{lem:V_flows}
Let the hybrid system $\HS_{\varepsilon}$ be given as in (\ref{eqn:Hy2}).  For each point $x_{\varepsilon} \in C_{\varepsilon}$, one has 
\begin{equation} \label{eqn:V_flows}
\begin{aligned}
\langle \nabla V(x_{\varepsilon}), F_{\varepsilon}(x_{\varepsilon}) \rangle \leq \begin{cases} 0 \hspace{5mm} & \mbox{\rm if } x_{\varepsilon} \in C_{\varepsilon_2} \\ 
\frac{\gamma}{\alpha_2} V(x_{\varepsilon}) \hspace{5mm} & \mbox{\rm if } x_{\varepsilon} \in C_{\varepsilon_1}
\end{cases}
\end{aligned}
\end{equation} 
\noindent
where  
\begin{equation} \label{eqn:thm_alpha2}
\alpha_2 =  \underset{\nu \in \mathcal{Q}, \sigma \in \mathcal{P}}{\lambda_{\max}} \Big ( \exp \big ( (\nu  h(\nu) + d(5 - \sigma) ) A_f^{\top} \big ) P \exp \big ( \big (\nu h(\nu) + d(5 - \sigma) \big ) A_f  \big ) \Big )
\end{equation}  
\begin{equation} \label{eqn:thm_gamma} 
\gamma = |\alpha| {\rm max} \Big \{ \frac{ p_{11} \epsilon}{2} , \beta + \frac{p_{11}}{2 \epsilon} \Big \}
\end{equation} 
\noindent
$\alpha = \frac{2(c-d)}{c}$, $\epsilon > 0$, $\beta = p_{11} 6d - p_{12}$, and $p_{11}$ and $p_{12}$ come from $P = \begin{bmatrix}
p_{11} & p_{12} \\ p_{21} & p_{22}
\end{bmatrix} \succ 0$.
\end{lemma}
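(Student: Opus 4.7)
The plan is to exploit the nilpotency $A_f^2 = 0$, which collapses the matrix exponential in $V$ to $\exp(A_f r) = I + r A_f$ so that
\[
V(x_\varepsilon) = \varepsilon^\top (I + r A_f^\top)\, P\, (I + r A_f)\,\varepsilon, \qquad r = r(\tau,p,q).
\]
Along flows one has $\dot\varepsilon = A_f\varepsilon$, $\dot p = \dot q = 0$ and $\dot r = \dot\tau\, h(q) = -h(q)$, so the product rule produces
\[
\langle \nabla V(x_\varepsilon), F_\varepsilon(x_\varepsilon)\rangle = 2\,\varepsilon^\top (I + rA_f^\top)P(I + rA_f)A_f\varepsilon + 2\dot r\, \varepsilon^\top (I + rA_f^\top)P A_f \varepsilon.
\]
The key algebraic reduction is the identity $(I + r A_f)A_f = A_f + r A_f^2 = A_f$, which collapses the first summand and yields
\[
\langle \nabla V(x_\varepsilon), F_\varepsilon(x_\varepsilon)\rangle = 2(1 - h(q))\,\varepsilon^\top (I + r A_f^\top) P A_f \varepsilon.
\]

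Next I would split on the value of $q$. On $C_{\varepsilon_2}$ one has $q=1$ and $h(1) = 1$, so the prefactor vanishes and $\dot V \equiv 0$, which is the first branch of (\ref{eqn:V_flows}). On $C_{\varepsilon_1}$ one has $q=0$ and $h(0) = d/c$, so $2(1 - h(0)) = 2(c-d)/c = \alpha$, which is non-positive since $c \le d$. Writing $P = \bigl[\begin{smallmatrix}p_{11} & p_{12}\\ p_{12} & p_{22}\end{smallmatrix}\bigr]$ together with $A_f = \bigl[\begin{smallmatrix}0 & 1\\ 0 & 0\end{smallmatrix}\bigr]$ and expanding the remaining $2{\times}2$ products, the $\varepsilon$-quadratic form reduces to the scalar identity
\[
\langle \nabla V(x_\varepsilon), F_\varepsilon(x_\varepsilon)\rangle = \alpha\bigl[\,p_{11}\varepsilon_\tau\varepsilon_a + (p_{12} + r\, p_{11})\varepsilon_a^2\,\bigr].
\]

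The main obstacle is to turn this signed, $r$-dependent quadratic form into a bound of the shape $\gamma\,|\varepsilon|^2$ and then into the stated bound $(\gamma/\alpha_2)V(x_\varepsilon)$. I would handle the cross term by Young's inequality with free parameter $\epsilon > 0$, giving $p_{11}|\varepsilon_\tau\varepsilon_a| \le (p_{11}\epsilon/2)\varepsilon_\tau^2 + (p_{11}/(2\epsilon))\varepsilon_a^2$, and I would control the pure $\varepsilon_a^2$ coefficient by bounding $r$ over its admissible range: on $C_{\varepsilon_1}$ (where $\tau \in [0,c]$ and $p \in \{0,\dots,5\}$) one has $r = \tau\,d/c + d(5-p) \in [0,6d]$, from which $|p_{12} + r\,p_{11}| \le \beta$ with $\beta = 6d\,p_{11} - p_{12}$. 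Collecting terms and taking the larger of the two resulting coefficients in front of $\varepsilon_\tau^2$ and $\varepsilon_a^2$ yields $\dot V \le \gamma\,|\varepsilon|^2$ with $\gamma$ as in (\ref{eqn:thm_gamma}). Finally, invoking the quadratic sandwich on $V$ in (\ref{eqn:v_bounds})—whose upper-bound constant $\alpha_2$ admits the closed form (\ref{eqn:thm_alpha2}) by maximizing $\lambda_{\max}(\exp(rA_f^\top)P\exp(rA_f))$ over the admissible extremal values of $(\tau,p,q)$ entering $r$—converts $\gamma|\varepsilon|^2$ into the stated $(\gamma/\alpha_2)V(x_\varepsilon)$, completing the proof.
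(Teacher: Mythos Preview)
Your proposal is correct and follows essentially the same route as the paper: compute $\dot V$ along flows, factor out $1-h(q)$ (equivalently the paper's $1-\nabla_\tau r$) to get identically zero on $C_{\varepsilon_2}$ and the scalar form $\alpha[p_{11}\varepsilon_\tau\varepsilon_a + (p_{12}+r\,p_{11})\varepsilon_a^2]$ on $C_{\varepsilon_1}$, bound $r\le 6d$, apply Young's inequality with parameter $\epsilon$, and finish via the quadratic bounds on $V$. The only cosmetic difference is that you invoke the nilpotency $(I+rA_f)A_f=A_f$ and the product rule up front, whereas the paper expands $V$ componentwise before arriving at the same factorization.
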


\ifbool{conf}{\begin{pf}}{
\begin{proof}} 
Before calculating $\langle \nabla V(x_{\varepsilon}), F_{\varepsilon}(x_{\varepsilon}) \rangle$, observe that the full expression of $V$ is given by
\begin{equation}
\begin{aligned}
V(x_{\varepsilon}) & = \begin{bmatrix}
\varepsilon_{\tau} \\ \varepsilon_a
\end{bmatrix}^{\top} \exp \big (A_f^{\top} r(\tau,p,q) \big ) \begin{bmatrix}
p_{11} & p_{12} \\ p_{21} & p_{22}
\end{bmatrix} \exp \big (A_f r(\tau,p,q) \big ) \begin{bmatrix}
\varepsilon_{\tau} \\ \varepsilon_a
\end{bmatrix} \\
& = \begin{bmatrix}
\varepsilon_{\tau} \\ \varepsilon_a
\end{bmatrix}^{\top} \begin{bmatrix}
1 & 0 \\
r(\tau,p,q) & 1
\end{bmatrix} \begin{bmatrix}
p_{11} & p_{12} \\ p_{21} & p_{22}
\end{bmatrix} \begin{bmatrix}
1 & r(\tau,p,q)  \\
0 & 1
\end{bmatrix} \begin{bmatrix}
\varepsilon_{\tau} \\ \varepsilon_a
\end{bmatrix} \\
& = \begin{bmatrix}
\varepsilon_{\tau} + \varepsilon_a r(\tau, p, q) \\ \varepsilon_a
\end{bmatrix}^{\top}  \begin{bmatrix}
p_{11} & p_{12} \\ p_{21} & p_{22}
\end{bmatrix}  \begin{bmatrix}
\varepsilon_{\tau} + \varepsilon_a r(\tau, p, q) \\ \varepsilon_a
\end{bmatrix} \\
& = \begin{bmatrix}
\varepsilon_{\tau} + \varepsilon_a r(\tau, p, q) \\ \varepsilon_a
\end{bmatrix}^{\top}   \begin{bmatrix}
p_{11} \big ( \varepsilon_{\tau} + \varepsilon_a r(\tau, p, q) \big ) + p_{12} \varepsilon_a \\
p_{21} \big ( \varepsilon_{\tau} + \varepsilon_a r(\tau, p, q) \big ) + p_{22} \varepsilon_a 
\end{bmatrix} \\
& = \big ( \varepsilon_{\tau} + \varepsilon_a r(\tau, p, q) \big ) \big ( p_{11} \big ( \varepsilon_{\tau} + \varepsilon_a r(\tau, p, q) \big ) + p_{12} \varepsilon_a \big ) + \varepsilon_a \big ( p_{21} \big ( \varepsilon_{\tau} + \varepsilon_a r(\tau, p, q) \big ) + p_{22} \varepsilon_a \big ) \\
& =  p_{11} \big ( \varepsilon_{\tau} + \varepsilon_a r(\tau, p, q) \big )^2 + p_{12} \varepsilon_a \big ( \varepsilon_{\tau} + \varepsilon_a r(\tau, p, q) \big ) + p_{21} \varepsilon_a \big ( \varepsilon_{\tau} + \varepsilon_a r(\tau, p, q) \big ) + p_{22} \varepsilon_a^2
\end{aligned}
\end{equation}
\noindent
then since $p_{12} = p_{21}$
\begin{equation}
\begin{aligned}
V(x_{\varepsilon}) & =  p_{11} \big ( \varepsilon_{\tau} + \varepsilon_a r(\tau, p, q) \big )^2 + 2 p_{12} \varepsilon_a \big ( \varepsilon_{\tau} + \varepsilon_a r(\tau, p, q) \big ) \big ) + p_{22} \varepsilon_a^2
\end{aligned}
\end{equation}
\noindent
In calculating $\langle \nabla V(x_{\varepsilon}), F_{\varepsilon}(x_{\varepsilon}) \rangle$, one has
\begin{equation} \label{eqn:V_dot_interim}
\begin{aligned}
\langle \nabla V(x_{\varepsilon}), F_{\varepsilon}(x_{\varepsilon}) \rangle &  = \begin{bmatrix}
\nabla_{\varepsilon_{\tau}} V(x_{\varepsilon}) &
\nabla_{\varepsilon_{a}} V(x_{\varepsilon}) &
\nabla_{\tau} V(x_{\varepsilon}) &
\nabla_{p} V(x_{\varepsilon}) &
\nabla_{q} V(x_{\varepsilon})
\end{bmatrix}
\begin{bmatrix}
\varepsilon_a \\
0 \\
-1 \\
0 \\
0 \\
\end{bmatrix} \\
& = \nabla_{\varepsilon_{\tau}} V(x_{\varepsilon}) \varepsilon_a - \nabla_{\tau} V(x_{\varepsilon})
\end{aligned}
\end{equation}
\noindent
\noindent
where
\begin{equation} \label{eqn:del_et_t}
\begin{aligned}
\nabla_{\varepsilon_{\tau}} V(x_{\varepsilon}) & = 2 p_{11} \big ( \varepsilon_{\tau} + \varepsilon_a r(\tau, p, q) \big ) + 2 p_{12} \varepsilon_a \\
\nabla_{\tau} V(x_{\varepsilon}) & = 2 p_{11} \varepsilon_a \nabla_{\tau} r(\tau, p, q) \big ( \varepsilon_{\tau} + \varepsilon_a r(\tau, p, q) \big ) + 2 p_{12} \varepsilon_a^2 \nabla_{\tau} r(\tau, p, q) \\
\end{aligned}
\end{equation}
\noindent
Substituting (\ref{eqn:del_et_t}) into (\ref{eqn:V_dot_interim}), we obtain
\begin{align*}
\langle \nabla V(x_{\varepsilon}), F_{\varepsilon}(x_{\varepsilon}) \rangle 
& = \Big ( 2 p_{11} \big ( \varepsilon_{\tau} + \varepsilon_a r(\tau, p, q) \big ) + 2 p_{12} \varepsilon_a \Big) \varepsilon_a \\ 
& \hspace{1.5cm} - 2 p_{11} \varepsilon_a \nabla_{\tau} r(\tau, p, q) \big ( \varepsilon_{\tau} + \varepsilon_a r(\tau, p, q) \big ) - 2 p_{12} \varepsilon_a^2 \nabla_{\tau} r(\tau, p, q) \\
& = 2 p_{11} \varepsilon_a \big ( \varepsilon_{\tau} + \varepsilon_a r(\tau, p, q) \big ) + 2 p_{12} \varepsilon_a^2  \\ 
& \hspace{1.5cm} - 2 p_{11} \varepsilon_a \nabla_{\tau} r(\tau, p, q) \big ( \varepsilon_{\tau} + \varepsilon_a r(\tau, p, q) \big ) - 2 p_{12} \varepsilon_a^2 \nabla_{\tau} r(\tau, p, q) \\
& = 2 p_{11} \varepsilon_a \big ( \varepsilon_{\tau} + \varepsilon_a r(\tau, p, q) \big ) (1 - \nabla_{\tau} r(\tau, p, q)) + 2 p_{12} \varepsilon_a^2 (1 - \nabla_{\tau} r(\tau, p, q))  \\ 
& = \big ( 2 p_{11} \varepsilon_a \big ( \varepsilon_{\tau} + \varepsilon_a r(\tau, p, q) \big ) + 2 p_{12} \varepsilon_a^2 \big ) \big (1 - \nabla_{\tau} r(\tau, p, q) \big )  \\ 
& = \big ( 2 p_{11} \big ( \varepsilon_a \varepsilon_{\tau} + \varepsilon_a^2 r(\tau, p, q) \big ) + 2 p_{12} \varepsilon_a^2 \big ) \big (1 - \nabla_{\tau} r(\tau, p, q) \big )  \\
\end{align*}
\noindent
 for each $x_{\varepsilon} \in C_{\varepsilon}$.   Now, with $\nabla_{\tau} r(\tau, p, q) = \frac{(c - d)(q - 1)}{c} + 1$,  when $x_{\varepsilon} \in C_{\varepsilon_2}$ with $q=1$, $\nabla_{\tau} r(\tau, p, q) = 1$, thus $$\langle \nabla V(x_{\varepsilon}), F_{\varepsilon}(x_{\varepsilon}) \rangle = 0$$ 
\noindent
When $x_{\varepsilon} \in C_{\varepsilon_2}$ with $q=0$, $r(\tau,p,q) = \tau \big ( \frac{d-c}{c} + 1 \big ) + d(5-p)$ and  $\nabla_{\tau} r(\tau, p, q) = \frac{d-c}{c} + 1$  one then has 
\begin{align*}
\langle \nabla V(x_{\varepsilon}), F_{\varepsilon}(x_{\varepsilon}) \rangle & = \Big ( 2 p_{11} \varepsilon_a \varepsilon_{\tau} + 2 p_{11} \varepsilon_a^2 \Big ( \tau \Big ( \frac{d-c}{c} + 1 \Big ) + d(5-p) \Big ) + 2 p_{12} \varepsilon_a^2 \Big ) \Big ( \frac{c-d}{c} \Big )  \\
& = \Big ( 2 p_{11} \varepsilon_a \varepsilon_{\tau} + 2 p_{11} \varepsilon_a^2 \Big ( \tau \Big ( \frac{d-c}{c} \Big ) + \tau + d(5-p) \Big ) + 2 p_{12} \varepsilon_a^2 \Big ) \Big ( \frac{c-d}{c} \Big ) \\
& = \Big ( \frac{c-d}{c} \Big ) 2 p_{11} \varepsilon_a \varepsilon_{\tau} + \Big ( \frac{c-d}{c} \Big ) 2 p_{11} \varepsilon_a^2 \Big ( \tau \Big ( \frac{d-c}{c} \Big ) + \tau + d(5-p) \Big ) \\
& \hspace{5mm} + \Big ( \frac{c-d}{c} \Big ) 2 p_{12} \varepsilon_a^2  \\
& = \Big ( \frac{2(c-d)}{c} \Big ) p_{11} \varepsilon_a \varepsilon_{\tau} + \Big ( \frac{2(c-d)}{c} \Big ) p_{11} \varepsilon_a^2 \Big ( \tau \Big ( \frac{d-c}{c} \Big ) + \tau + d(5-p) \Big ) \\
& \hspace{5mm} + \Big ( \frac{2(c-d)}{c} \Big ) p_{12} \varepsilon_a^2 
\end{align*}
\noindent
Let $\alpha = \frac{2(c-d)}{c}$, then since, $0 < c \leq d$ we have that $\alpha \leq 0$
\begin{align*}
\langle \nabla V(x_{\varepsilon}), F_{\varepsilon}(x_{\varepsilon}) \rangle & = -|\alpha| p_{11} \varepsilon_a \varepsilon_{\tau} - |\alpha| p_{11} \varepsilon_a^2 \Big ( \tau \Big ( \frac{d-c}{c} \Big ) + \tau + d(5-p) \Big ) - |\alpha| p_{12} \varepsilon_a^2 
\end{align*}
 
\noindent
Then, recognizing that $\tau \in [0,c]$ when $x_{\varepsilon} \in C_{\varepsilon_2}$ then we have that $\tau \leq c$,  which due to the fact that $p_{11} > 0$ and $p \in \mathcal{P} = \{0, 1, 2, 3, 4, 5 \}$ leading to
\begin{align*}
\langle \nabla V(x_{\varepsilon}), F_{\varepsilon}(x_{\varepsilon}) \rangle & \leq - |\alpha| p_{11} \varepsilon_a \varepsilon_{\tau} + |\alpha| p_{11} \Big ( c \Big ( \frac{d-c}{c} \Big ) + c + d(5-p) \Big )  \varepsilon_a^2 - |\alpha| p_{12} \varepsilon_a^2 \\
& \leq - |\alpha| p_{11} \varepsilon_a \varepsilon_{\tau} + |\alpha| p_{11} \Big (  (d-c) + c + d(5-p) \Big ) \varepsilon_a^2 - |\alpha| p_{12} \varepsilon_a^2 \\
& \leq - |\alpha| p_{11} \varepsilon_a \varepsilon_{\tau} + |\alpha| p_{11} d( 6 - p )  \varepsilon_a^2 - |\alpha| p_{12} \varepsilon_a^2 
\end{align*}
\noindent
 for each $x_{\varepsilon} \in C_{\varepsilon_2}$. We can upper bound the quantity $6 - p$ by noting that $p \in \mathcal{P} = \{0, 1, 2, 3, 4, 5 \}$. Thus, we have that $6 - p \leq 6$ for each $p \in \mathcal{P}$, leading to

\begin{align*}
\langle \nabla V(x_{\varepsilon}), F_{\varepsilon}(x_{\varepsilon}) \rangle & \leq - |\alpha| p_{11} \varepsilon_a \varepsilon_{\tau} + |\alpha| \big ( p_{11} 6d  - p_{12} \big ) \varepsilon_a^2 
\end{align*}
Now, with $\beta = p_{11} 6d   - p_{12}$. Then, we obtain 
\begin{equation}
\langle \nabla V(x_{\varepsilon}), F_{\varepsilon}(x_{\varepsilon}) \rangle \leq |\alpha| p_{11} |\eps_a| |\eps_{\tau}| + |\alpha| \beta \eps_a^2 \hspace{1cm}  \forall x_{\varepsilon} \in C_{\varepsilon_2}
\end{equation}
  
\noindent
Then, through an application of Young's inequality one has
\begin{align*}
\langle \nabla V(x_{\varepsilon}), F_{\varepsilon}(x_{\varepsilon}) \rangle
& \leq |\alpha| p_{11} \Big (\frac{1}{2 \epsilon} \eps_a^2 + \frac{\epsilon}{2} \eps_{\tau}^2 \Big ) + |\alpha| \beta \eps_a^2 \\
& \leq \frac{|\alpha| p_{11}}{2 \epsilon} \eps_a^2 + |\alpha| \beta \eps_a^2 + \frac{|\alpha| p_{11} \epsilon}{2}  \eps_{\tau}^2  \\
& \leq \frac{|\alpha| p_{11} \epsilon}{2} \eps_{\tau}^2 + |\alpha| \Big (\beta + \frac{p_{11}}{2 \epsilon}  \Big )\eps_a^2 \\
& \leq \gamma \big ( \eps_{\tau}^2 + \eps_a^2 \big ) \\
& \leq \gamma \varepsilon^{\top} \varepsilon
\end{align*}
 for each $x_{\varepsilon} \in C_{\varepsilon_2}$.  Then, from the definition of $V$ in (\ref{eqn:lyap_fun}) 
\begin{align*}
\langle \nabla V(x_{\varepsilon}), F_{\varepsilon}(x_{\varepsilon}) \rangle & \leq \gamma |x_{\varepsilon} |^2  \\
& \leq \frac{\gamma}{\alpha_2} V(x_{\varepsilon})
\end{align*}
\noindent
 for each $x_{\varepsilon} \in C_{\varepsilon_2}$  where $\epsilon > 0$, $\alpha_2$ and $\gamma$ are positive constants given in (\ref{eqn:thm_alpha2}) and (\ref{eqn:thm_gamma}), respectively.
\ifbool{conf}{\hfill $\square$ \end{pf}}{
\end{proof}}}


\ifbool{conf}{}{
\begin{lemma} \label{lem:V_jumps}
Let the hybrid system $\HS_{\varepsilon}$ in (\ref{eqn:Hy2}) with constants $d \geq c > 0$ be given. If there exist a constant $\mu > 0$ and a positive definite symmetric matrix $P$ such that 
\ifbool{conf}{
\begin{equation} \label{eqn:lyap_cond}
\begin{aligned}
A_g^{\top} e^{6d A_f^{\top} } P e^{6d A_f } A_g  -  P \prec 0
\end{aligned}
\end{equation}}
{\begin{equation} \label{eqn:lyap_cond}
\begin{aligned}
A_g^{\top} \exp \big ( 6d A_f^{\top} \big ) P \exp \big ( 6d A_f \big ) A_g  -  P \prec 0
\end{aligned}
\end{equation}}
\noindent
where $A_{g} = \begin{bmatrix} 0 & \gamma_1 \\ 0 &  1 {\minus} \mu \gamma_2 \end{bmatrix}$  and $A_f$ is as given in (\ref{eqn:HyEps_fx})  with $\gamma_1 = \frac{1}{2}(3c + 4d)$ and $\gamma_2 = 2c + 2d$ then, for each $x_{\varepsilon} \in \mathcal{M} \cap D_{\varepsilon}$, $$V \big (G_{\ell}(x_{\varepsilon}) \big ) - V \big (x_{\varepsilon} \big ) \leq 0$$ for each $\ell \in \{1,2,3,4,5\}$, and \footnote{Observe that $\varepsilon^+ = A_g \varepsilon$ is the matrix representation of the jump map $G_6$ for which $\varepsilon$ is reset to when $x_{\varepsilon} \in \mathcal{M}_6 \cap D_{\varepsilon}$.} $$V \big ( G_{6}(x_{\varepsilon}) \big ) - V \big (x_{\varepsilon} \big ) \leq -\sigma \varepsilon^{\top} \varepsilon$$ where
\begin{equation} \label{eqn:sigma}
\sigma \in \bigg ( 0,  \minus \lambda_{\rm min} \Big ( A_g^{\top} \exp \big ((6d) A_f^{\top} \big ) P \exp \big ((6d) A_f \big ) A_g  -  P \Big ) \bigg )
\end{equation}
\end{lemma}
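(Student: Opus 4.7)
The plan is to split the analysis into two regimes according to whether the jump modifies the error coordinate $\varepsilon$. Note that the quantity
\[
r(\tau,p,q) := \tau\, h(q) + d(5-p)
\]
inside the matrix exponential of $V$ is updated at every jump, whereas $\varepsilon$ is updated only by $G_{\varepsilon_6}$.

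\emph{Case 1: the jumps $G_{\varepsilon_1},\ldots,G_{\varepsilon_5}$.} Since these jumps leave $\varepsilon$ untouched, $V(G_{\varepsilon_\ell}(x_\varepsilon)) - V(x_\varepsilon) = 0$ whenever the post-jump value $r^+$ equals the pre-jump value $r$. Every jump set $D_{\varepsilon_\ell}$ enforces $\tau = 0$, so the pre-jump value is simply $r = d(5-p)$. Inspecting $G_\ell$, the post-jump pair $(\tau^+,q^+)$ is either $(d,1)$, in which case $r^+ = d + d(5-p^+)$, or $(c,0)$, in which case $r^+ = c\,h(0) + d(5-p^+)$. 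Using the key identity $c\, h(0) = c(1 + c^{-1}(d-c)) = d$, both cases collapse to $r^+ = d(6-p^+) = d(6-(p+1)) = d(5-p) = r$, and the claim holds with equality.

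\emph{Case 2: the jump $G_{\varepsilon_6}$.} The crucial step is to establish the linear form $\varepsilon^+ = A_g \varepsilon$. On $D_{\varepsilon_6} \cap \mathcal{M}_6$ we have $\tau = 0$ and $q = 1$, so each $\m_n^i$ reduces to an explicit affine function of $(\tau_i,\tau_k,a_i,a_k)$ through the invariants defining $\mathcal{M}_6$. Substituting these expressions into the feedback laws (\ref{eqn:offset_law2}) and (\ref{eqn:skew_law2}) cancels the absolute clock offsets $\tau_i(0),\tau_k(0)$, and the remaining terms collapse to $K_{\tilde{o}} = -\varepsilon_\tau + \gamma_1 \varepsilon_a$ and $K_a = \mu\, \gamma_2 \varepsilon_a$ with $\gamma_1 = \tfrac{1}{2}(3c+4d)$ and $\gamma_2 = 2c+2d$. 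Reading off the first components of $G_{\varepsilon_6}$ then yields $\varepsilon^+ = A_g \varepsilon$ with $A_g$ as in the lemma.

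To conclude, observe that $r = 0$ on $D_{\varepsilon_6}$ and $r^+ = 6d$ on $G_{\varepsilon_6}(D_{\varepsilon_6})$, again by the identity $c\,h(0) = d$ applied with $p^+ = 0$. Therefore
\[
V(G_{\varepsilon_6}(x_\varepsilon)) - V(x_\varepsilon) = \varepsilon^\top \bigl(A_g^\top \exp(6d\, A_f^\top)\,P\,\exp(6d\, A_f)\,A_g - P\bigr)\varepsilon,
\]
which, by the LMI hypothesis (\ref{eqn:lyap_cond}) and the Rayleigh--Ritz inequality, is upper bounded by $-\sigma\,\varepsilon^\top \varepsilon$ for any $\sigma$ in the interval specified in (\ref{eqn:sigma}). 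The hard part is the derivation of $\varepsilon^+ = A_g \varepsilon$: it requires substituting the five invariants of $\mathcal{M}_6$ into the feedback laws and verifying the clean cancellation of the unknown initial clock offsets. The remaining pieces reduce either to the single identity $c\,h(0) = d$ or to a direct invocation of the LMI hypothesis.
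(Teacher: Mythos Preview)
Your proposal is correct and follows essentially the same approach as the paper's proof. The only difference is presentational: in Case~1 you unify the five sub-cases through the single identity $c\,h(0)=d$, whereas the paper computes $V(G_{\varepsilon_\ell}(x_\varepsilon))-V(x_\varepsilon)$ separately for each $\ell\in\{1,\ldots,5\}$; in Case~2 your derivation of $\varepsilon^+=A_g\varepsilon$ from the $\mathcal{M}_6$ invariants and the subsequent quadratic-form computation match the paper's argument line for line.
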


\ifbool{conf}{\begin{pf}}{
\begin{proof}}
 For every $g \in G_{\varepsilon}(x_{\varepsilon})$, the state $\tau$ is reset to a point in  the set $\{c,d\}$. Moreover, for each $x_{\varepsilon} \in D_{\varepsilon}$, $\tau = 0$. Hence, when $x_{\varepsilon} \in D_{\varepsilon_1} \cap \mathcal{M}_1$, we have that $\tau = 0$, $q = 0$, and $p = 0$, leading to
\begin{align*}
V(G_{\varepsilon_1}(x_{\varepsilon})) - V(x_{\varepsilon}) & = \varepsilon^{\top} \exp \big (A_f^{\top} (d + d(5 \minus 1)) \big ) P \exp \big (A_f (d + d(5 \minus 1)) \big ) \varepsilon \\ 
& \hspace{1cm} - \varepsilon^{\top} \exp \big (A_f^{\top} (0 + d(5-0)) \big ) P \exp \big (A_f (0 + d(5 \minus 0)) \big )  \varepsilon \\
& = \varepsilon^{\top} \exp \big (A_f^{\top} (5d) \big ) P \exp \big (A_f (5d) \big ) \varepsilon \\
& \hspace{5mm} - \varepsilon^{\top} \exp \big (A_f^{\top} (5d) \big ) P \exp \big (A_f (5d) \big ) \varepsilon \\
& = 0
\end{align*}
When $x_{\varepsilon} \in D_{\varepsilon_2} \cap \mathcal{M}_2$, we have that $\tau = 0$, $q = 1$, and $p = 1$, leading to
\begin{align*}
V(G_{\varepsilon_2}(x_{\varepsilon})) - V(x_{\varepsilon}) & = \varepsilon^{\top} \exp \big (A_f^{\top} (c(1 + c^{\minus 1}(d - c)) + d(5 \minus 2)) \big ) P \exp \big (A_f (c(1 \\
& \hspace{5mm} + c^{\minus 1}(d - c)) + d(5 \minus 2)) \big ) \varepsilon \\ 
& \hspace{1cm} - \varepsilon^{\top} \exp \big (A_f^{\top} (0 + d(5 \minus 1)) \big ) P \exp \big (A_f (0 + d(5 \minus 1)) \big )  \varepsilon \\
& = \varepsilon^{\top} \exp \big (A_f^{\top} (d + 3d) \big ) P \exp \big (A_f (d + 3d) \big ) \varepsilon \\
& \hspace{5mm} - \varepsilon^{\top} \exp \big (A_f^{\top} (4d) \big ) P \exp \big (A_f (4d) \big ) \varepsilon \\
& = 0
\end{align*}
When $x_{\varepsilon} \in D_{\varepsilon_3} \cap \mathcal{M}_3$, we have that $\tau = 0$, $q = 0$, and $p = 2$, leading to
\begin{align*}
V(G_{\varepsilon_3}(x_{\varepsilon})) - V(x_{\varepsilon}) & = \varepsilon^{\top} \exp \big (A_f^{\top} (d + d(5 \minus 3)) \big ) P \exp \big (A_f (d + d(5 \minus 3)) \big ) \varepsilon \\ 
& \hspace{1cm} - \varepsilon^{\top} \exp \big (A_f^{\top} (0 + d(5 \minus 2)) \big ) P \exp \big (A_f (0 + d(5 \minus 2)) \big )  \varepsilon \\
& = \varepsilon^{\top} \exp \big (A_f^{\top} (3d) \big ) P \exp \big (A_f (3d) \big ) \varepsilon \\
& \hspace{5mm} - \varepsilon^{\top} \exp \big (A_f^{\top} (3d) \big ) P \exp \big (A_f (3d) \big ) \varepsilon \\
& = 0
\end{align*}
When $x_{\varepsilon} \in D_{\varepsilon_4} \cap \mathcal{M}_4$, we have that $\tau = 0$, $q = 1$, and $p = 3$, leading to
\begin{align*}
V(G_{\varepsilon_4}(x_{\varepsilon})) - V(x_{\varepsilon}) & = \varepsilon^{\top} \exp \big (A_f^{\top} (c(1 + c^{\minus 1}(d - c)) + d(5 \minus 4)) \big ) P \exp \big (A_f (c(1 \\
& \hspace{5mm} + c^{\minus 1}(d - c)) + d(5 \minus 4)) \big ) \varepsilon \\ 
& \hspace{1cm} - \varepsilon^{\top} \exp \big (A_f^{\top} (0 + d(5 \minus 3)) \big ) P \exp \big (A_f (0 + d(5 \minus 3)) \big )  \varepsilon \\
& = \varepsilon^{\top} \exp \big (A_f^{\top} (d + d) \big ) P \exp \big (A_f (d + d) \big ) \varepsilon \\
& \hspace{5mm} - \varepsilon^{\top} \exp \big (A_f^{\top} (2d) \big ) P \exp \big (A_f (2d) \big ) \varepsilon \\
& = 0
\end{align*}
When $x_{\varepsilon} \in D_{\varepsilon_5} \cap \mathcal{M}_5$, we have that $\tau = 0$, $q = 0$, and $p = 4$, leading to
\begin{align*}
V(G_{\varepsilon_5}(x_{\varepsilon})) - V(x_{\varepsilon}) & = \varepsilon^{\top} \exp \big (A_f^{\top} (d + d(5 \minus 5)) \big ) P \exp \big (A_f (d + d(5 \minus 5)) \big ) \varepsilon \\ 
& \hspace{1cm} - \varepsilon^{\top} \exp \big (A_f^{\top} (0 + d(5 \minus 4)) \big ) P \exp \big (A_f (0 + d(5 \minus 4)) \big )  \varepsilon \\
& = \varepsilon^{\top} \exp \big (A_f^{\top} (d) \big ) P \exp \big (A_f (d) \big ) \varepsilon \\
& \hspace{5mm} - \varepsilon^{\top} \exp \big (A_f^{\top} (d) \big ) P \exp \big (A_f (d) \big ) \varepsilon \\
& = 0
\end{align*}
\noindent
When $x_{\varepsilon} \in D_{\varepsilon_6} \cap \mathcal{M}_6$, we have that $\tau = 0$, $q = 1$, and $p = 5$. For resets according to $G_{\varepsilon_6}$, one has 
\ifbool{conf}{
\begin{align*}
V(G_{\varepsilon_6}(x_{\varepsilon})) \minus V(x_{\varepsilon}) & = \\
& \hspace{-2.25cm} {\tiny \mequals} \begin{bmatrix} \varepsilon {\tiny \mplus} \begin{bmatrix}
{\tiny \minus} K_{\tilde{o}}(\m^i) \\ K_{a}(\m^i,\tau_k)
\end{bmatrix} \end{bmatrix}^{\top} e^{A_f^{\top} (c {\tiny \mplus} d(5))} P e^{A_f (c {\tiny \mplus} d(5))} \begin{bmatrix} \varepsilon {\tiny \mplus} \begin{bmatrix} {\tiny \minus} K_{\tilde{o}}(\m^i) \\ K_{a}(\m^i,\tau_k)
\end{bmatrix} \end{bmatrix} \\ 
& - \varepsilon^{\top} e^{A_f^{\top} (0 + d(0))} P e^{A_f^{\top} (0 + d(0))}  \varepsilon \\
& \hspace{-2.25cm} {\tiny \mequals} \begin{bmatrix} \varepsilon {\tiny \mplus} \begin{bmatrix}
{\tiny \minus} K_{\tilde{o}}(\m^i) \\ K_{a}(\m^i,\tau_k)
\end{bmatrix} \end{bmatrix}^{\top} e^{A_f^{\top} (c {\tiny \mplus} 5d)} P e^{A_f (c {\tiny \mplus} 5d) } \begin{bmatrix} \varepsilon {\tiny \mplus} \begin{bmatrix} {\tiny \minus} K_{\tilde{o}}(\m^i) \\ K_{a}(\m^i,\tau_k)
\end{bmatrix} \end{bmatrix} \\ & - \varepsilon^{\top} P  \varepsilon
\end{align*}}{
\begin{align*}
V(G_{\varepsilon_6}(x_{\varepsilon})) - V(x_{\varepsilon}) & = \\
& \hspace{-3cm} \begin{bmatrix} \varepsilon \mplus \begin{bmatrix}
K_{\tilde{o}}(\m^i) \\ \minus K_{a}(\m^i,\tau_k)
\end{bmatrix} \end{bmatrix}^{\top} \exp \big (A_f^{\top} (c(1 \mplus c^{\minus 1}(d \minus c)) \mplus d(5 \minus 0)) \big ) P \exp \big (A_f (c(1 \\
& \hspace{5mm} \mplus c^{\minus 1}(d \minus c)) \mplus d(5 \minus 0)) \big ) \begin{bmatrix} \varepsilon \mplus \begin{bmatrix} K_{\tilde{o}}(\m^i) \\ \minus K_{a}(\m^i,\tau_k)
\end{bmatrix} \end{bmatrix} \\ 
& \hspace{1cm} - \varepsilon^{\top} \exp \big (A_f^{\top} (0 + d(0)) \big ) P \big (A_f^{\top} (0 + d(0)) \big )  \varepsilon \\
& \hspace{-25mm} = \begin{bmatrix} \varepsilon + \begin{bmatrix}
K_{\tilde{o}}(\m^i) \\ \minus K_{a}(\m^i,\tau_k)
\end{bmatrix} \end{bmatrix}^{\top} \exp \big (A_f^{\top} (6d) \big ) P \exp \big ( A_f (6d) \big ) \begin{bmatrix} \varepsilon + \begin{bmatrix} K_{\tilde{o}}(\m^i) \\ \minus K_{a}(\m^i,\tau_k)
\end{bmatrix} \end{bmatrix} \\
& \hspace{5mm} - \varepsilon^{\top} P  \varepsilon
\end{align*}}
\noindent
 Now, with  $x_{\varepsilon} \in D_{\varepsilon_6} \cap \mathcal{M}_6$, which implies that $p=5$, $q=1$, and $\tau = 0$,  one has that for jumps with resets according to $G_{\varepsilon_6}(x_{\varepsilon})$, the  feedback laws  $K_{\tilde{o}}$ and $K_{a}$ applied to $\tau_{k}$ and $a_{k}$, respectively, give 
\begin{align*}
K_{\tilde{o}}(\m^i) & = \frac{1}{2} (\m^i_4- \m^i_5 - \m^i_2 + \m^i_3) \\
& = \frac{1}{2} \bigg ( \Big ( (\tau_{k} - a_{k}(2c + 2d)) - (\tau_{i} - a_{i}(2c + 3d)) \Big ) \\ 
& \hspace{1cm} - \Big ( (\tau_{i} - a_{i}(c + d)) - (\tau_{k} - a_{k}(c + 2d)) \Big ) \bigg ) \\
& = \frac{1}{2} \Big ( 2 (\tau_{k} - \tau_{i} ) + a_{i}(3c + 4d) - a_{k} (3c + 4d) \Big ) \\
& = (\tau_{k} - \tau_{i} ) + \frac{1}{2} (a_{i} - a_{k}) (3c + 4d) \\
& = - \varepsilon_{\tau} + \gamma_1  \varepsilon_a \\
K_{a}(\m^i, \tau_k) & =  \mu \big ( (\m^i_1 - \m^i_5 ) - (\tau_{k} - \m^i_4) \big ) \\
& = \mu \big ( (\tau_{i} - a_{i}(d) - (\tau_{i} - a_{i}(2c + 3d))) \\
& \hspace{1cm} - (\tau_{k} - (\tau_{k} - a_{k}(2c + 2d))) \big ) \\
& = \mu \big ( (a_{i}(2c + 3d) + a_{i}(d) ) - (a_{k}(2c + 2d)) \big ) \\
& = \mu (a_{i} - a_{k})(2c + 2d) \\
& = \mu \gamma_2 \varepsilon_a
\end{align*}
\noindent
where $\gamma_1 = \frac{3c + 4d}{2}$ and $\gamma_2 = 2(c + d)$.   Using the expressions for $K_{\tilde{o}}(\m^i) $ and $K_{a}(\m^i, \tau_k)$, it follows that 
\ifbool{conf}{
\small
\begin{align*}
V(G_{\varepsilon_6}(x_{\varepsilon})) {\minus} V(x_{\varepsilon}) & = \\ 
& \hspace{-25mm} = \begin{bmatrix} \varepsilon {+} \begin{bmatrix} K_{\tilde{o}}(\m^i) \\ \minus K_{a}(\m^i,\tau_k)
\end{bmatrix} \end{bmatrix}^{\top} e^{6d A_f^{\top} } P e^{ 6d A_f} \begin{bmatrix} \varepsilon {+} \begin{bmatrix} K_{\tilde{o}}(\m^i) \\ \minus K_{a}(\m^i,\tau_k)
\end{bmatrix} \end{bmatrix} {\minus} \varepsilon^{\top} P  \varepsilon \\
& \hspace{-25mm} = \begin{bmatrix} \varepsilon_{\tau} {\minus} \varepsilon_{\tau} {+} \gamma_1  \varepsilon_a \\ \varepsilon_a {\minus} \mu \gamma_2 \varepsilon_a \end{bmatrix}^{\top} e^{ 6d A_f^{\top}} P e^{ 6d A_f} \begin{bmatrix} \varepsilon_{\tau} {\minus} \varepsilon_{\tau} {+} \gamma_1  \varepsilon_a \\ \varepsilon_a {\minus} \mu \gamma_2 \varepsilon_a \end{bmatrix} {\minus} \varepsilon^{\top} P \varepsilon \\
& \hspace{-25mm} = \begin{bmatrix} I \varepsilon {+} \begin{bmatrix}
\minus 1 & \gamma_1 \\ 0 & \minus \mu \gamma_2
\end{bmatrix} \varepsilon \end{bmatrix}^{\top} e^{ 6d A_f^{\top}} P e^{ 6d A_f} \begin{bmatrix} I \varepsilon {+} \begin{bmatrix}
\minus 1 & \gamma_1 \\ 0 & \minus \mu \gamma_2
\end{bmatrix} \varepsilon \end{bmatrix} \\ 
& \hspace{-15mm} {\minus} \varepsilon^{\top} P  \varepsilon \\
& \hspace{-25mm} = \varepsilon \begin{bmatrix}
0 & \gamma_1 \\ 0 & 1 \minus \mu \gamma_2 \end{bmatrix}^{\top} e^{ 6d A_f^{\top}} P e^{ 6d A_f} \begin{bmatrix}
0 & \gamma_1 \\ 0 & 1 \minus \mu \gamma_2 \end{bmatrix} \varepsilon - \varepsilon^{\top} P  \varepsilon \\
& \hspace{-25mm} = \varepsilon^{\top} A_g^{\top} e^{ 6d A_f^{\top}} P e^{ 6d A_f} A_g  \varepsilon - \varepsilon^{\top} P  \varepsilon \\
& \hspace{-25mm} = \varepsilon^{\top} \big ( A_g^{\top} e^{ 6d A_f^{\top}} P e^{ 6d A_f} A_g  -  P \big )  \varepsilon
\end{align*}
\normalsize}{
\begin{align*}
V(G_{\varepsilon_6}(x_{\varepsilon})) {-} V(x_{\varepsilon}) & = \begin{bmatrix} \varepsilon + \begin{bmatrix} K_{\tilde{o}}(\m^i) \\ \minus K_{a}(\m^i,\tau_k)
\end{bmatrix} \end{bmatrix}^{\top} \exp \big ( 6d A_f^{\top} \big ) P \exp \big ( 6d A_f \big ) \begin{bmatrix} \varepsilon + \begin{bmatrix} K_{\tilde{o}}(\m^i) \\ \minus K_{a}(\m^i,\tau_k)
\end{bmatrix} \end{bmatrix} \\
& \hspace{5mm} - \varepsilon^{\top} P  \varepsilon \\
& \hspace{-20mm} = \begin{bmatrix} \varepsilon_{\tau} - \varepsilon_{\tau} + \gamma_1  \varepsilon_a \\ \varepsilon_a - \mu \gamma_2 \varepsilon_a \end{bmatrix}^{\top} \exp \big ( 6d A_f^{\top} \big ) P \exp \big ( 6d A_f \big ) \begin{bmatrix} \varepsilon_{\tau} - \varepsilon_{\tau} + \gamma_1  \varepsilon_a \\ \varepsilon_a - \mu \gamma_2 \varepsilon_a \end{bmatrix} \\
& \hspace{5mm} - \varepsilon^{\top} P \varepsilon \\
& \hspace{-20mm} = \varepsilon \begin{bmatrix}
0 & \gamma_1 \\ 0 & 1 \minus \mu \gamma_2 \end{bmatrix}^{\top} \exp \big ( 6d A_f^{\top} \big ) P \exp \big ( 6d A_f \big ) \begin{bmatrix}
0 & \gamma_1 \\ 0 & 1 \minus \mu \gamma_2 \end{bmatrix} \varepsilon - \varepsilon^{\top} P  \varepsilon \\
& \hspace{-20mm} = \varepsilon^{\top} A_g^{\top} \exp \big ( 6d A_f^{\top} \big ) P \exp \big ( 6d A_f \big ) A_g  \varepsilon - \varepsilon^{\top} P  \varepsilon \\
& \hspace{-20mm} = \varepsilon^{\top} \Big ( A_g^{\top} \exp \big ( 6d A_f^{\top} \big ) P \exp \big ( 6d A_f \big ) A_g  -  P \Big )  \varepsilon
\end{align*}}
\noindent
 for each $x_{\varepsilon} \in D_{\varepsilon_6} \cap \mathcal{M}_6$.  Then, by continuity of condition (\ref{eqn:lyap_cond}),  there exists $\sigma$ as in (\ref{eqn:sigma}) such that 
$$V(G_{\varepsilon_6}) - V(x_{\varepsilon}) \leq -\sigma \varepsilon^\top \varepsilon$$
 for each $x_{\varepsilon} \in D_{\varepsilon_6} \cap \mathcal{M}_6$.
\ifbool{conf}{\hfill $\square$ \end{pf}}{
\end{proof}}}

\ifbool{conf}{}{\begin{remark}
Observe that condition (\ref{eqn:lyap_cond}) may be difficult
to satisfy numerically as it may not be convex in $\mu$ and $P$. The authors in \cite{97}
utilize a polytopic embedding strategy to arrive at a linear
matrix inequality in which one needs to find some matrices
$X_i$ such that the exponential matrix is an element in
the convex hull of the $X_i$ matrices. Such an algorithm can be adapted to our setting.
\end{remark}}


\ifbool{conf}{}{
\ifbool{conf}{\begin{thm}}{
\begin{theorem}} \label{thm}
Let the hybrid system $\HS_{\varepsilon}$  in (\ref{eqn:Hy2})  with constants $d \geq c > 0$ be given. If there exist a constant $\mu > 0$ and a positive definite symmetric matrix $P$ such that 
 (\ref{eqn:lyap_cond}) holds  with $\gamma_1 = \frac{3c + 4d}{2}$ and $\gamma_2 = 2(c + d)$, and $\sigma$ as in (\ref{eqn:sigma}) such that
\begin{equation} 
\eta^{\frac{1}{6}} \hspace{1mm}  \rho \hspace{1mm} < 1
\end{equation}
\noindent
with $\eta = \big | 1 - \frac{\sigma }{\alpha_2} \big |$ and $\rho = \exp \Big ( \frac{\gamma c}{ 2 \alpha_2} \Big )$ holds, where $\alpha_2$ and $\gamma$ are as given in (\ref{eqn:thm_gamma}) and (\ref{eqn:thm_gamma}), respectively,  then $\A_{\varepsilon}$ is globally attractive for $\HS_{\varepsilon}$. Moreover, every maximal solution $\phi_{\varepsilon}$ to $\HS_{\varepsilon}$  with $\phi(0,0) \in \big ( C_{\varepsilon} \cup D_{\varepsilon} \big ) \cap \mathcal{M}$,   satisfies 
\begin{equation} \label{eqn:bound_phi}
\begin{aligned}
|\phi(t,j)|_{\A_{\varepsilon}} & \leq \sqrt{  \frac{\alpha_2}{\alpha_1} \eta^{\frac{j}{6} }  \rho^{ j } \exp \Big ( \frac{\gamma c}{\alpha_2} \Big ) } |\phi(0,0)|_{\A_{\varepsilon}} & & \forall (t,j) \in \mbox{\rm dom } \phi
\end{aligned}
\end{equation}
\noindent
where  $$\alpha_1 =  \underset{\nu \in \mathcal{Q}, \sigma \in \mathcal{P}}{\lambda_{\min}} \Big ( \exp \big ( (\nu  h(\nu) + d(5 - \sigma) ) A_f^{\top} \big ) P \exp \big ( \big (\nu h(\nu) + d(5 - \sigma) \big ) A_f  \big ) \Big )$$ and, consequently, $\lim_{t+j \to \infty} |\phi(t,j)|_{\A_{\varepsilon}} = 0$.
\ifbool{conf}{\end{thm}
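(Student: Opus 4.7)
The plan is to combine the infinitesimal bounds from Lemmas \ref{lem:V_flows} and \ref{lem:V_jumps} with the finite-time capture result of Lemma \ref{lem:finite_M} and then integrate across each protocol cycle. By Lemma \ref{lem:finite_M}, every maximal solution $\phi$ to $\HS_\varepsilon$ reaches $\mathcal{M}$ in finite hybrid time, and by Lemma \ref{lem:fwd_inv_M} the set $\mathcal{M}$ is forward invariant; it therefore suffices to establish the decay estimate (\ref{eqn:bound_phi}) for solutions with $\phi(0,0) \in (C_\varepsilon \cup D_\varepsilon) \cap \mathcal{M}$, from which global attractivity of $\A_\varepsilon$ follows by concatenating the two phases.

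Fix such a $\phi$. On $\mathcal{M}$, the logic variables $p$ and $q$ are locked into a single sequence: between two consecutive $G_{\varepsilon_6}$ jumps, the system executes exactly one cycle consisting of three flow intervals in $C_{\varepsilon_1}$ (of duration $c$, when $q=0$) alternating with three flow intervals in $C_{\varepsilon_2}$ (of duration $d$, when $q=1$), punctuated by the six jumps $G_{\varepsilon_1}, \ldots, G_{\varepsilon_6}$. Applying Lemma \ref{lem:V_flows} interval by interval via Gronwall's inequality, each $C_{\varepsilon_1}$-flow multiplies $V$ by at most $\exp(\gamma c/\alpha_2) = \rho^2$, while $C_{\varepsilon_2}$-flows leave $V$ non-increasing. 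Applying Lemma \ref{lem:V_jumps} together with the upper bound $V \leq \alpha_2 \varepsilon^\top \varepsilon$ from (\ref{eqn:v_bounds}), the first five jumps of a cycle preserve $V$ and $G_{\varepsilon_6}$ satisfies $V(G_{\varepsilon_6}(x_\varepsilon)) \leq V(x_\varepsilon) - \sigma\varepsilon^\top\varepsilon \leq (1 - \sigma/\alpha_2) V(x_\varepsilon) = \eta V(x_\varepsilon)$, with $\eta \in (0,1)$ under condition (\ref{eqn:lyap_cond}) and an appropriate choice of $\sigma$ as in (\ref{eqn:sigma}). The per-cycle multiplier of $V$ is therefore at most $\rho^6 \eta$.

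Iterating over $k$ complete cycles and then accounting for any residual partial cycle (at most five additional jumps together with at most one incomplete $C_{\varepsilon_1}$-flow), a careful per-jump bookkeeping yields $V(\phi(t,j)) \leq \rho^{j+2} \eta^{j/6} V(\phi(0,0))$ for every $(t,j) \in \mbox{dom } \phi$, in which the constant overhead $\rho^2 = \exp(\gamma c/\alpha_2)$ absorbs the worst-case growth from the at-most-one partial $C_{\varepsilon_1}$-flow still pending before the next $G_{\varepsilon_6}$ resets the bookkeeping. Substituting into the quadratic Lyapunov bounds (\ref{eqn:v_bounds}) yields (\ref{eqn:bound_phi}). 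Finally, the assumption $\eta^{1/6}\rho < 1$ forces $\rho^{j+2} \eta^{j/6} \to 0$ as $j \to \infty$, and the completeness of $\phi$ established in Lemma \ref{lem:hy_comp} combined with the fact that every flow interval has duration at least $c > 0$ ensures $j \to \infty$ whenever $t+j \to \infty$; hence $|\phi(t,j)|_{\A_\varepsilon} \to 0$, proving global attractivity.

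The main technical obstacle is the per-jump bookkeeping in the iteration: the per-cycle contraction $\rho^6 \eta$ distributes cleanly into an average factor $\rho \eta^{1/6}$ per jump, but the actual flow growth and the $G_{\varepsilon_6}$ decrease are not aligned at every individual jump, so verifying that (\ref{eqn:bound_phi}) holds uniformly in $(t,j)$ requires checking each of the six possible residual-jump configurations within a partial cycle and showing they are all dominated by the single $\exp(\gamma c/\alpha_2)$ overhead. A secondary subtlety is confirming that condition (\ref{eqn:lyap_cond}) indeed permits the selection of $\sigma$ strictly positive and strictly less than $\alpha_2$, so that $\eta \in (0,1)$ as required.
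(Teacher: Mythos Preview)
Your proposal is correct and follows essentially the same approach as the paper's proof: both combine the flow and jump estimates from Lemmas~\ref{lem:V_flows} and~\ref{lem:V_jumps}, track $V$ through one full protocol cycle to obtain the per-cycle multiplier $\eta\rho^6$, and then distribute this across jumps to arrive at the bound~(\ref{eqn:bound_phi}) via the quadratic sandwich~(\ref{eqn:v_bounds}). Your exposition is in fact slightly cleaner in two respects: you invoke Lemma~\ref{lem:finite_M} explicitly to justify the passage from arbitrary initial conditions to $\mathcal{M}$ (the paper's proof only treats solutions starting in $\mathcal{M}$, leaving the global-attractivity claim implicit), and you identify the per-cycle factor conceptually rather than by unrolling the six steps by hand as the paper does.
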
}{
\end{theorem}}}

\ifbool{conf}{}{
\begin{proof}
Pick a maximal solution with initial condition $\phi_{\varepsilon}(0,0) \in \big ( C_{\varepsilon} \cup D_{\varepsilon} \big ) \cap \mathcal{M}$. Recall the function $V$ in (\ref{eqn:v_bounds}), from the proof of Lemma \ref{lem:V_jumps} we have that
\begin{equation} \label{eqn:G6_jump}
V(G_{\varepsilon_6}(x_{\varepsilon})) - V(x_{\varepsilon}) \leq -\sigma \varepsilon^\top \varepsilon \hspace{1cm} \forall x_{\varepsilon} \in D_{\varepsilon_6} \cap \mathcal{M}
\end{equation}
and from the definition of $V$ in  (\ref{eqn:v_bounds}), there exists a positive scalar  $\alpha_2$ as in (\ref{eqn:thm_alpha2})  such that
\begin{align*}
V(x_{\varepsilon}) & \leq \alpha_2 |x_{\varepsilon}|^2_{\A_{\varepsilon}} 
\end{align*}
rearranging terms one then has
\begin{align*} 
- |x_{\varepsilon}|^2_{\A_{\varepsilon}} & \leq - \frac{1}{\alpha_2} V(x_{\varepsilon})
\end{align*}
Then, by making the appropriate substitutions in (\ref{eqn:G6_jump}), since $\varepsilon^\top \varepsilon = |x_{\varepsilon}|^2_{\A_{\varepsilon}}$ one has
\begin{align*}
V(G_{\varepsilon_6}(x_{\varepsilon})) - V(x_{\varepsilon}) & \leq - \frac{\sigma}{\alpha_2} V(x_{\varepsilon}) \\
V(G_{\varepsilon_6}(x_{\varepsilon})) & \leq \Big | 1 - \frac{\sigma }{\alpha_2} \Big | V(x_{\varepsilon})
\end{align*}
\noindent
 From Lemma \ref{lem:V_flows} we have that for each $x_{\varepsilon} \in C_{\varepsilon}$, 
\begin{equation}
\langle \nabla V(x_{\varepsilon}), F_{\varepsilon}(x_{\varepsilon}) \rangle \leq \begin{cases} 
0 & \mbox{ if } x_{\varepsilon} \in C_{\varepsilon_2} \\
\frac{\gamma}{\alpha_2} V(x_{\varepsilon}) & \mbox{ if } x_{\varepsilon} \in C_{\varepsilon_1}
\end{cases}
\end{equation}
and from Lemma \ref{lem:V_jumps} we have that for each $x_{\varepsilon} \in D_{\varepsilon} \cap \mathcal{M}$,
\begin{equation}
V(G_{\varepsilon_\ell}(x_{\varepsilon})) \leq \begin{cases} 
V(x_{\varepsilon}) & \mbox{ if } \ell \in \{1, 2, 3, 4, 5 \} \\
\Big ( 1 - \frac{\sigma }{\alpha_2} \Big ) V(x_{\varepsilon}) & \mbox{ if } \ell = 6
\end{cases}
\end{equation}
Pick a solution $\phi$ to $\HS_{\varepsilon}$ with $\phi_{\varepsilon}(0,0) \in C_{\varepsilon} \cap \mathcal{M}_1$. Then for each $(t,j) \in [0, t_1] \times \{0 \}$
$$V  \big ( \phi_{\varepsilon} (t,0) \big ) \leq \exp \Big ( \frac{\gamma}{\alpha_2} (t_1 - 0) \Big )  V  \big ( \phi_{\varepsilon} (0,0) \big )$$
At $(t_1,1)$, following a reset according to $G_{\varepsilon_1}$ one has
$$V  \big ( \phi_{\varepsilon} (t_1,1) \big ) \leq V  \big ( \phi_{\varepsilon} (t_1,0) \big )$$
Then, since $\phi_q(t_1,1) = 1$ for each $(t,j) \in [t_1, t_2] \times \{1 \}$, we obtain
$$V  \big ( \phi_{\varepsilon} (t,1) \big ) \leq  V  \big ( \phi_{\varepsilon} (t_1,1) \big )$$
At $(t_2,2)$, following a reset according to $G_{\varepsilon_2}$ one has
$$V  \big ( \phi_{\varepsilon} (t_2,2) \big ) \leq V  \big ( \phi_{\varepsilon} (t_2,1) \big )$$
Then since $\phi_q(t_2,2) = 0$ for each $(t,j) \in [t_2, t_3] \times \{2 \}$, we obtain
$$V  \big ( \phi_{\varepsilon} (t,2) \big ) \leq \exp \Big ( \frac{\gamma}{\alpha_2} (t_3-t_2) \Big ) V  \big ( \phi_{\varepsilon} (t_2,2) \big )$$
At $(t_3,3)$, following a reset according to $G_{\varepsilon_3}$ one has
$$V  \big ( \phi_{\varepsilon} (t_3,3) \big ) \leq V  \big ( \phi_{\varepsilon} (t_3,3) \big )$$
Then since $\phi_q(t_3,3) = 1$ for each $(t,j) \in [t_3, t_4] \times \{3 \}$, we obtain
$$V  \big ( \phi_{\varepsilon} (t,3) \big ) \leq V  \big ( \phi_{\varepsilon} (t_3,2) \big )$$
At $(t_4,4)$ following a reset according to $G_{\varepsilon_4}$ one has
$$V  \big ( \phi_{\varepsilon} (t_4,4) \big ) \leq V  \big ( \phi_{\varepsilon} (t_4,3) \big )$$
Then since $\phi_q(t_4,4) = 0$ for each $(t,j) \in [t_4, t_5] \times \{4 \}$, we obtain
$$V  \big ( \phi_{\varepsilon} (t,4) \big ) \leq \exp \Big ( \frac{\gamma}{\alpha_2} (t_5 - t_4) \Big ) V  \big ( \phi_{\varepsilon} (t_4,4) \big )$$
At $(t_5,5)$, following a reset according to $G_{\varepsilon_5}$ one has
$$V  \big ( \phi_{\varepsilon} (t_5,5) \big ) \leq V  \big ( \phi_{\varepsilon} (t_5,4) \big )$$
then since $\phi_q(t_5,5) = 1$ for each $(t,j) \in [t_5, t_6] \times \{5 \}$, we obtain
$$V  \big ( \phi_{\varepsilon} (t,5) \big ) \leq V  \big ( \phi_{\varepsilon} (t_5,5) \big )$$
At $(t_6,6)$, following a reset according to $G_{\varepsilon_6}$ one has
$$V  \big ( \phi_{\varepsilon} (t_6,6) \big ) \leq \Big | 1 - \frac{\sigma }{\alpha_2} \Big | V  \big ( \phi_{\varepsilon} (t_6,5) \big )$$
\noindent
Making the appropriate substitutions one has
$$V  \big ( \phi_{\varepsilon} (t_6,6) \big ) \leq \Big | 1 - \frac{\sigma }{\alpha_2} \Big | \exp \Big ( \frac{\gamma}{\alpha_2} (t_5 - t_4) \Big ) \exp \Big ( \frac{\gamma}{\alpha_2} (t_3 - t_2) \Big ) \exp \Big ( \frac{\gamma}{\alpha_2} (t_1 - 0) \Big ) V \big ( \phi_{\varepsilon} (0,0) \big )$$
\noindent
leading to a general bound of the form
\begin{equation} \label{eqn:bound1} 
V  \big ( \phi_{\varepsilon} (t,j) \big ) \leq \Big | 1 - \frac{\sigma }{\alpha_2} \Big |^{\lfloor\frac{j}{6} \rfloor} \bigg ( \prod_{k = 0}^{ \lfloor \frac{j-1}{2} \rfloor} \exp \Big ( \frac{\gamma}{\alpha_2} (t_{2k+1} - t_{2k}) \Big ) \bigg )  V \big ( \phi_{\varepsilon}(0,0) \big )
\end{equation}
\noindent
However, by noting the bounds in (\ref{eqn:t_bounds}) one has that $t_{j+1} - t_j \leq c(j+1)$ for each $j \in \{2i : i \in \mathbb{N} \}, j > 0$,  then assuming $\gamma > 0$,  the bound in (\ref{eqn:bound1}) reduces to
\begin{equation*} 
V  \big ( \phi_{\varepsilon} (t,j) \big ) \leq \Big | 1 - \frac{\sigma }{\alpha_2} \Big |^{\lfloor\frac{j}{6} \rfloor} \bigg ( \prod_{k = 0}^{ \lfloor \frac{j-1}{2} \rfloor} \exp \Big ( \frac{\gamma}{\alpha_2} (c(2k + 1)) \Big ) \bigg ) V \big ( \phi_{\varepsilon} (0,0) \big ) 
\end{equation*}
\begin{equation*} 
V  \big ( \phi_{\varepsilon} (t,j) \big ) \leq \Big | 1 - \frac{\sigma }{\alpha_2} \Big |^{\lfloor\frac{j}{6} \rfloor}  \bigg ( \exp \Big ( \frac{\gamma c}{\alpha_2} \Big ) \bigg )^{ \lceil \frac{j}{2} \rceil} V \big ( \phi_{\varepsilon} (0,0) \big )
\end{equation*}
\noindent 
Using the relation $\lceil \frac{j}{2} \rceil = \frac{j}{2} + 1$ we then have 
\begin{equation*}
\begin{aligned}
V  \big ( \phi_{\varepsilon} (t,j) \big ) & \leq \Big | 1 - \frac{\sigma }{\alpha_2} \Big |^{\lfloor\frac{j}{6} \rfloor}  \bigg ( \exp \Big ( \frac{\gamma c}{\alpha_2} \Big ) \bigg )^{ \frac{j}{2} } \exp \Big ( \frac{\gamma c}{\alpha_2} \Big ) V \big ( \phi_{\varepsilon} (0,0)  \big ) \\
\end{aligned}
\end{equation*}
Then noting that $\lfloor \frac{j}{6} \rfloor \leq \frac{j}{6}$ 
\begin{equation*}
\begin{aligned}
V(t,j) & \leq \Big | 1 - \frac{\sigma }{\alpha_2} \Big |^{\frac{j}{6}}  \bigg ( \exp \Big ( \frac{\gamma c}{ 2 \alpha_2} \Big ) \bigg )^{ j } \exp \Big ( \frac{\gamma c}{\alpha_2} \Big ) V(0,0)
\end{aligned}
\end{equation*}
Then given the definition of $V$ in (\ref{eqn:v_bounds}) we have that
\begin{equation} \label{eqn:bound2}
\begin{aligned}
\alpha_1 |x_{\varepsilon}|^2_{\A_{\varepsilon}} \leq V \big ( \phi_{\varepsilon} (t,j) \big ) & \leq \Big | 1 - \frac{\sigma }{\alpha_2} \Big |^{\frac{j}{6}}  \bigg ( \exp \Big ( \frac{\gamma c}{ 2 \alpha_2} \Big ) \bigg )^{ j } \exp \Big ( \frac{\gamma c}{\alpha_2} \Big ) V \big ( \phi_{\varepsilon} (0,0)  \big )
\end{aligned}
\end{equation}
 Finally, by leveraging $V(\phi(0,0)) \leq \alpha_2 |\phi(0,0)|^2_{\A_{\varepsilon}}$, we arrive at (\ref{eqn:bound_phi}).
\end{proof}}


\ifbool{conf}{
\ifbool{conf}{\begin{thm}}{
\begin{theorem}} \label{thm}
Let the hybrid system $\HS_{\varepsilon}$ with constants $d = c > 0$ be given. If there exist a constant $\mu > 0$ and positive definite symmetric matrix $P$ such that 
\begin{equation} \label{eqn:lyap_cond}
\begin{aligned}
A_g^{\top} e^{ 6d A_f^{\top} } P e^{6d A_f } A_g  -  P \prec 0
\end{aligned}
\end{equation}
is satisfied where $A_{g} = \begin{bmatrix} 0 & \gamma_1 \\ 0 &  1 {\minus} \mu \gamma_2 \end{bmatrix}$ with $\gamma_1 = \frac{7}{2}c$ and $\gamma_2 = 4c$, then $\A_{\varepsilon}$ is globally attractive for $\HS_{\varepsilon}$.
\ifbool{conf}{\end{thm}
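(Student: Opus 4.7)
The plan is to carry out a Lyapunov analysis on the augmented state $x_\varepsilon$ using a time-warped quadratic form that exploits the protocol counter $p$ and timer $\tau$ to absorb the effect of flows into the exponential of the matrix $A_f$. Concretely, I would take
\begin{equation*}
V(x_\varepsilon) \;=\; \varepsilon^\top \exp\!\bigl(A_f^\top r(\tau,p)\bigr)\,P\,\exp\!\bigl(A_f\,r(\tau,p)\bigr)\,\varepsilon,
\qquad r(\tau,p) \;:=\; \tau + d(5-p),
\end{equation*}
which, under the hypothesis $c=d$, is a quadratic form in $\varepsilon$ whose shaping matrix depends smoothly on the discrete/continuous clock variables. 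Because $A_f$ is nilpotent of index two, $\exp(A_f s)$ and its transpose are polynomial in $s$ and hence globally bounded on the compact range $[0,6d]$ of $r$, producing scalars $0<\alpha_1\le\alpha_2$ with $\alpha_1|x_\varepsilon|^2_{\A_\varepsilon}\le V(x_\varepsilon)\le\alpha_2|x_\varepsilon|^2_{\A_\varepsilon}$.

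Next I would show $\langle\nabla V(x_\varepsilon),F_\varepsilon(x_\varepsilon)\rangle=0$ on $C_\varepsilon$. Along flows, $\dot\varepsilon=A_f\varepsilon$ while $\dot\tau=-1$ and $\dot p=0$, so the inner factor $\exp(A_f r(\tau,p))\varepsilon$ satisfies
\begin{equation*}
\frac{d}{dt}\bigl[\exp(A_f\,r)\varepsilon\bigr] \;=\; \exp(A_f\,r)\bigl(\dot r\, A_f\varepsilon + A_f\varepsilon\bigr) \;=\; (1+\dot r)\,A_f\exp(A_f\,r)\varepsilon,
\end{equation*}
and with $c=d$ one has $\dot r=\dot\tau=-1$, so the derivative vanishes identically and $\dot V\equiv 0$ along flows. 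This replaces the two-case bound in Lemma \ref{lem:V_flows}, collapsing it to $\gamma=0$ when $c=d$.

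The jump analysis mirrors Lemma \ref{lem:V_jumps}. For $\ell\in\{1,\ldots,5\}$, the map $G_{\varepsilon_\ell}$ leaves $\varepsilon$ untouched, resets $\tau$ to either $c$ or $d$ (which coincide), and increments $p$ by one; a direct substitution shows $r(\tau^+,p^+)=d+d(5-(p+1))=d(5-p)=r(0,p)$, so $V$ is continuous at these jumps. For $\ell=6$, $\tau^+=c=d$, $p^+=0$, $q^+=0$, and $\varepsilon^+=A_g\varepsilon$, giving
\begin{equation*}
V(G_{\varepsilon_6}(x_\varepsilon))-V(x_\varepsilon) \;=\; \varepsilon^\top\!\bigl(A_g^\top e^{6dA_f^\top}P\,e^{6dA_f}A_g - P\bigr)\varepsilon \;\le\; -\sigma\,|\varepsilon|^2,
\end{equation*}
for some $\sigma>0$ by continuity of the LMI hypothesis. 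Using $V\le\alpha_2|\varepsilon|^2$, this becomes $V(G_{\varepsilon_6}(x_\varepsilon))\le(1-\sigma/\alpha_2)V(x_\varepsilon)$ at every sixth jump.

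To close the argument, I would pick an arbitrary maximal solution $\phi$, which is complete by Lemma \ref{lem:hy_comp}; by Lemmas \ref{lem:fwd_inv_M} and \ref{lem:finite_M} there exists $T^\star$ such that $\phi(t,j)\in\mathcal{M}$ for all $t+j\ge T^\star$, so from that point on the strict-decrease estimate applies at every $G_{\varepsilon_6}$ jump. Because the protocol counter $p$ cycles through $\{0,1,\ldots,5\}$ and every maximal solution undergoes infinitely many jumps (the timer $\tau$ forces a jump at least every $d$ units of flow time), the solution incurs infinitely many $G_{\varepsilon_6}$ events, yielding $V(\phi(t,j))\to 0$ and hence $|\phi(t,j)|_{\A_\varepsilon}\to 0$. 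The main obstacle I anticipate is verifying that $V$ is not merely non-increasing but truly continuous across the five book-keeping jumps, and arguing that infinitely many $G_{\varepsilon_6}$ jumps occur in every complete solution; the former is the algebraic identity $r(\tau^+,p^+)=r(0,p)$ used above, and the latter follows from the persistence-of-jumping structure enforced by the timer dynamics in (\ref{eqn:timer_dyn}) together with the cyclic evolution of $p$.
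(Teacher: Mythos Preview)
Your proposal is correct and follows essentially the same approach as the paper: the Lyapunov function $V(x_\varepsilon)=\varepsilon^\top e^{A_f^\top r}Pe^{A_fr}\varepsilon$ with $r(\tau,p)=\tau+d(5-p)$ is exactly the paper's function specialized to $c=d$ (where the factor $h(q)$ collapses to $1$), and your flow/jump analysis, the use of Lemmas~\ref{lem:hy_comp}, \ref{lem:fwd_inv_M}, \ref{lem:finite_M} to reach $\mathcal{M}$, and the persistence-of-$G_{\varepsilon_6}$-jumps argument all coincide with the paper's proof. The only cosmetic difference is that you verify $V$ is exactly constant across the five bookkeeping jumps (via $r(\tau^+,p^+)=r(0,p)$), whereas the paper records this as a nonstrict inequality; the underlying computation is identical.
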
}{
\end{theorem}}

\ifbool{conf}{

To prove this result, we first show the existence of a forward invariant and finite time attractive set that enforces valid initialization values of the logic variables $p$, $q$ and memory state vectors $\m^i$ and $\m^k$ such that the update laws $K_{\tilde{o}}$ and $K_a$ give the input for the convergence of $\varepsilon$. 
We then show that for $x_{\varepsilon} \in C_{\varepsilon}$, $V$ has the infinitesimal property of being constant during flows, namely  $$\langle \nabla V(x_{\varepsilon}), f_{\varepsilon}(x_{\varepsilon}) \rangle = 0$$ By continuity of the condition in (\ref{eqn:lyap_cond}), there exists $\sigma > 0$ such that, within the initialization, set $V$ is constant or strictly decreasing during jumps. Namely for each $x_{\varepsilon} \in D_{\varepsilon}$, $$V \big (G_{\ell}(x_{\varepsilon}) \big ) - V \big (x_{\varepsilon} \big ) \leq 0$$ for each $\ell \in \{1,2,3,4,5\}$, and $$V \big ( G_{6}(x_{\varepsilon}) \big ) - V \big (x_{\varepsilon} \big ) \leq -\sigma \varepsilon^{\top} \varepsilon$$
Then by picking a solution $\phi$ from the set of solutions to $\HS_{\varepsilon}$ and evaluating $V$ along the solution $\phi$. We show that, following attractivity to the invariant initialization set, the infinitesimal properties of $V$ gives that $\phi$ converges to $\A_{\varepsilon}$ thus, Problem \ref{prob:1} is solved. Due to space constraints, the complete proof has been omitted and will be published elsewhere.}{
\ifbool{conf}{\begin{pf}}{
\begin{proof}}
Pick a maximal solution $\phi \in \mathcal{S}_{\HS_{\varepsilon}}$. By Lemma \ref{lem:finite_M}, there exists $T^*$ such that $\phi(t,j) \in \mathcal{M}$ for all $(t,j) \in \mbox{dom } \phi$ with $t + j \geq T^*$. Now observe, by Lemma \ref{lem:V_flows}, when $\phi(t,j) \in C_{\varepsilon}$ the derivative of $V(\phi(t,j))$ along flows for the chosen solution is given by $$\langle \nabla V(\phi(t,j)), f(\phi(t,j)) \rangle = 0$$ for all $(t,j) \in \mbox{dom } \phi$. Moreover, Lemma \ref{lem:V_jumps} gives that when $\phi(t,j) \in \mathcal{M} \cap D_{\varepsilon}$ and $(t,j+1) \in \mbox{dom } \phi$, $V(x_{\varepsilon})$ evaluated at jumps for the chosen solution has that $$V(G_{\ell}(\phi(t,j))) - V(\phi(t,j)) \leq 0$$ for each $\ell \in \{1,2,3,4,5\}$ and $$V(G_{\varepsilon_6}(\phi(t,j))) - V(\phi(t,j)) \leq \minus \sigma \varepsilon^{\top} \varepsilon$$ Then by noting that the length of each flow interval $[t_j,t_{j+1}] \times \{j\}$ is finite due to the flow map $C_{\varepsilon}$ and its dependence on $\tau$, the solution will always periodically jump according to $G_{\varepsilon}(x_{\varepsilon})$. In particular, the solution will always periodically jump according to $G_{\varepsilon_6}(x_{\varepsilon})$ due to the monotonic behavior of $p$ following resets according to $G_{\varepsilon_{\ell}}(x_{\varepsilon})$ for each $\ell \in \{1,2,3,4,5\}$, thus the solution converges to $\A_{\varepsilon}$.
\ifbool{conf}{\hfill $\square$ \end{pf}}{
\end{proof}}}}{}

\section{About the Multi-Agent Case} \label{sec:multi_agent}

 In this section, we present an extension to the proposed algorithm model to capture the scenario of synchronizing multiple networked agents. For such a setting, we consider  a network system of $n$ nodes in a  leader-follower scenario where there exists a single designated reference node to which all the connected $N = n - 1$ nodes synchronize. To this end,  let $\tau_R \in \reals$ define the clock of the designated reference node and $\tau_S := (\tau_{S_1}, \tau_{S_2}, \ldots, \tau_{S_{N}}) \in \reals^{N}$ define the clocks of the synchronizing child nodes,  where $\tau_{S_i}$ is the clock of the $i$-th child node.  Moreover, we let $a_R \in \reals$ and $a := (a_1, a_2, \ldots, a_{N}) \in \reals^{N}$ define the skews of the reference clock and synchronizing clocks, respectively,  where $a_{i}$ is the clock skew of the $i$-th child node.  Given the leader-follower architecture to synchronize the nodes, the algorithm in  $\HS$ is modified such that the algorithm modeled by $\HS$ is executed for each synchronizing node. In particular, the algorithm executes the synchronization process given by \ref{itm:one}-\ref{itm:six} for the reference node $\tau_R$ and the $i$-th child node $\tau_{S_i}$. Upon completion, the algorithm then executes the same synchronization steps \ref{itm:one}-\ref{itm:six} for the reference node and the  $\mbox{mod } (i+1, N)$-th child node.  This procedure is repeated recurrently and cyclically for each pair reference-child node in the network. To enable the modeling of such an algorithm, we define:
\begin{itemize}
\item A discrete variable $\ell \in \{1, 2, \ldots, N \} =: \mathcal{S}$ that indexes the node to be synchronized. The variable remains constant during flows, namely, $\dot{s} = 0$, and resets to either $s + 1$ upon the completion of the synchronization algorithm for $s \in \{1, 2, \ldots, n-2 \}$ or is reset to $1$ when $s = n -1$. 
\item For each $\ell \in \mathcal{S}$, a timer variable $\tilde{\tau}_{\ell} \in [0, 3c + 3d]$ to track the execution of the synchronization algorithm for the respective $\ell$-th child node, with dynamics
\begin{align*}
& \dot{\tilde{\tau}}_{\ell} = -1 & & \tilde{\tau}_{\ell} \in [0,3c + 3d] \\
& \tilde{\tau}_{\ell}^+ = 3c + 3d  & & \tilde{\tau}_{\ell} = 0
\end{align*}
\noindent
for each $\ell \in \mathcal{S}$. The value $3c + 3d$ reflects the duration of the synchronization algorithm executed between the reference and the synchronizing node capturing the total time elapsed during message transmission and residence delay.
\end{itemize}

The state of this multi-agent system is given by 
\begin{equation*}
\tilde{x} := (\tau_R, \tau_S, a_R, a, \tau, \tilde{\tau}, \m^R, \m^S, \ell, p, q) \in \widetilde{\mathcal{X}}
\end{equation*}
where $\tilde{\tau} := (\tilde{\tau}_1, \tilde{\tau}_2, \ldots, \tilde{\tau}_{N} )$,  $\m^R = [\m^R_1, \m^R_2, \m^R_3, \m^R_4, \m^R_5, \m^R_6]^{\top} \in \reals^6$, $\m^S = [\m^S_1, \m^S_2, \m^S_3, \m^S_4, \m^S_5, \m^S_6]^{\top} \in \reals^6$  and
\begin{equation*}
\widetilde{\mathcal{X}} := \reals \times \reals^{N} \times \reals \times \reals^{N} \times [0,d] \times [0, 3c + 3d]^{N} \times \mathbb{R}^6 \times \mathbb{R}^6 \times \mathcal{S} \times \mathcal{P} \times \mathcal{Q}
\end{equation*}
Then by noting the dynamics of the clocks as given in (\ref{eqn:clocks_dyn}) and those of the timer $\tilde{\tau}$ above, the continuous dynamics of $\tilde{x}$ is given by
\begin{equation*}
\begin{aligned}
\dot{\tilde{x}} = (a_{R}, a ,0, \textbf{0}_{N \times 1}, -1, -\textbf{1}_{(N) \times 1}, \textbf{0}_{6 \times 1},\textbf{0}_{6 \times 1},0,0, 0) & & \tilde{x} \in \widetilde{C} := \widetilde{\mathcal{X}}
\end{aligned}
\end{equation*}
To model the discrete dynamics of the communication and arrival events of the exchanged timing messages, in addition to the subsequent corrections on the clock rate and offset, we consider the jump map $\widetilde{G}(\tilde{x}) := \{\widetilde{G}^{i}(\tilde{x}) : \tilde{x} \in \widetilde{D}^i, i \in \mathcal{S} \}$ where

\begin{equation*}
\widetilde{G}^i(\tilde{x}) =
\widetilde{G}^i_{k}(\tilde{x}) \quad \mbox{if } x \in \widetilde{D}^\ell_{k} 
\end{equation*} 
\noindent
and
\begin{equation*}
\begin{aligned}
& \hspace{-15mm}
\widetilde{G}^i_{1}(\tilde{x}) = \begin{bmatrix}
\renewcommand\arraystretch{0.7}
\tau_R \\
\tau_S \\
a_R \\
a \\
d \\
\tilde{\tau} \\
\begin{bmatrix}
\begin{bmatrix}
\tau_{R}, & \m_1^R, & \cdots, & \m_5^R
\end{bmatrix} & \m^s 
\end{bmatrix}^\top \\
\ell \\
p + 1 \\ 
1
\end{bmatrix},
& &
\widetilde{G}^i_{2}(\tilde{x}) = \begin{bmatrix}
\renewcommand\arraystretch{0.7}
\tau_R \\
\tau \\
a_R \\
a \\
c \\
\tilde{\tau} \\
\begin{bmatrix}
\m^R & \begin{bmatrix}
\tau_{i}, & \m_1^R, & \cdots, & \m_5^R
\end{bmatrix} 
\end{bmatrix}^\top \\
\ell \\
p + 1 \\ 
0
\end{bmatrix}, \\
& \hspace{-15mm}
\widetilde{G}^i_{3}(\tilde{x}) = \begin{bmatrix}
\renewcommand\arraystretch{0.7}
\tau_R \\
\tau_S \\
a_R \\
a \\
d \\
\tilde{\tau} \\
\begin{bmatrix}
\m^R & \begin{bmatrix}
\tau_{i}, & \m_1^s, & \cdots, & \m_5^s
\end{bmatrix} 
\end{bmatrix}^\top \\
\ell \\
p + 1 \\ 
1
\end{bmatrix},
& &
\widetilde{G}^i_{4}(\tilde{x}) = \begin{bmatrix}
\renewcommand\arraystretch{0.7}
\tau_R \\
\tau_S \\
a_R \\
a \\
c \\
\tilde{\tau} \\
\begin{bmatrix}
\begin{bmatrix}
\tau_{R}, & \m_1^s, & \cdots, & \m_5^s
\end{bmatrix} & \m^s 
\end{bmatrix}^\top \\
\ell \\
p + 1 \\ 
0
\end{bmatrix}, \\
& \hspace{-15mm}
\widetilde{G}^i_{5}(\tilde{x}) = \begin{bmatrix}
\renewcommand\arraystretch{0.7}
\tau_R \\
\tau_S \\
a_R \\
a \\
d \\
\tilde{\tau} \\
\begin{bmatrix}
\begin{bmatrix}
\tau_{R}, & \m_1^R, & \cdots, & \m_5^R
\end{bmatrix} & \m^s
\end{bmatrix}^\top \\
\ell \\
p + 1 \\ 
1
\end{bmatrix},
& &
\widetilde{G}^i_{6}(\tilde{x}) = \begin{bmatrix}
\renewcommand\arraystretch{0.7}
\tau_R \\
\tau - \begin{bmatrix} \textbf{0}_{i-1}, & K_{\tilde{o}}(\m^R), & \textbf{0}_{n - 1 - i} \end{bmatrix}^\top \\ 
a_R \\
a + \begin{bmatrix} \textbf{0}_{i-1}, & K_{a}(\m^R, \tau_{S_i}), & \textbf{0}_{n - 1 - i} \end{bmatrix}^\top \\
c \\
\begin{bmatrix} \tilde{\tau}_{1}, \ldots, \tilde{\tau}_{i},  3c + 3d, \tilde{\tau}_{i+2}, \ldots, \tilde{\tau}_{n {-} 1} \end{bmatrix}^{\top} \\
\begin{bmatrix}
\m^R & \begin{bmatrix}
\tau_{i}, & \m_1^R, & {\cdots}, & \m_5^R
\end{bmatrix}
\end{bmatrix}^\top \\
\ell + 1 \\
0 \\ 
0
\end{bmatrix}
\end{aligned}
\end{equation*}
\noindent
 The main idea behind the construction of the map $\widetilde{G}(\tilde{x}) := \{\widetilde{G}^{i}(\tilde{x}) : \tilde{x} \in \widetilde{D}^i, i \in \mathcal{S} \}$ is to capture protocol events of the synchronization algorithm for each child node.  To handle the condition where $\ell = n - 1$ such that the protocol cycles back to synchronizing the first node, we have the following jump map for $\widetilde{G}^{N}_{6}$,
\begin{equation*}
\begin{aligned}
\widetilde{G}^{N}_{6}(\tilde{x}) = \begin{bmatrix}
\renewcommand\arraystretch{0.7}
\tau_R \\
\tau - \begin{bmatrix} \textbf{0}_{i-1}, & K_{\tilde{o}}(\m^R), & \textbf{0}_{n - 1 - i} \end{bmatrix}^\top \\ 
a_R \\
a + \begin{bmatrix} \textbf{0}_{i-1}, & K_{a}(\m^R, \tau_i), & \textbf{0}_{n - 1 - i} \end{bmatrix}^\top \\
c \\
\begin{bmatrix} \tilde{\tau}_{1}, & \ldots, & \tilde{\tau}_{n-2}, &  3c + 3d \end{bmatrix}^{\top} \\
\begin{bmatrix}
\m^R & \begin{bmatrix}
\tau_{i}, & \m_1^R, & {\cdots}, & \m_5^R
\end{bmatrix}
\end{bmatrix}^\top \\
1 \\
0 \\ 
0
\end{bmatrix}
\end{aligned}
\end{equation*}
\noindent
To trigger the jump map corresponding to the particular protocol event  for each child node,  we define the jump set as $\widetilde{D} := \widetilde{D}^1 \cup \widetilde{D}^2 \cup \cdots \cup \widetilde{D}^i \cup \cdots \cup \widetilde{D}^{N}$ where $\widetilde{D}^i := \widetilde{D}^i_{1} \cup \widetilde{D}^i_{2} \cup \widetilde{D}^i_{3} \cup \widetilde{D}^i_{4} \cup \widetilde{D}^i_{5} \cup \widetilde{D}^i_{6}$ and
\begin{equation*}
\begin{aligned}
\widetilde{D}^i_{1} & := \{\tilde{x} \in \widetilde{\mathcal{X}} : \tau = 0, p = 0, \ell = i \}, \hspace{1cm}
\widetilde{D}^i_{2} := \{\tilde{x} \in \widetilde{\mathcal{X}} : \tau = 0, p = 1, \ell = i \} \\
\widetilde{D}^i_{3} & := \{\tilde{x} \in \widetilde{\mathcal{X}} : \tau = 0, p = 2, \ell = i \}, \hspace{1cm}
\widetilde{D}^i_{4} := \{\tilde{x} \in \widetilde{\mathcal{X}} : \tau = 0, p = 3, \ell = i \} \\
\widetilde{D}^i_{5} & := \{\tilde{x} \in \widetilde{\mathcal{X}} : \tau = 0, p = 4, \ell = i \}, \hspace{1cm}
\widetilde{D}^i_{6} := \{\tilde{x} \in \widetilde{\mathcal{X}} : \tau = 0, \tilde{\tau}_{\ell} = 0,\\ 
& \hspace{10.2cm} p = 5, \ell = i \}
\end{aligned}
\end{equation*}
\noindent
This hybrid system is denoted 
\begin{equation} \label{eqn:Hy2_multi}
\widetilde{\HS} = (\widetilde{C}, \widetilde{F}, \widetilde{D}, \widetilde{G})
\end{equation}

\subsubsection{Error Model}

With an abuse of notation, let $\varepsilon := (\varepsilon_1, \ldots, \varepsilon_{N}) \in \reals^{2(N)}$, where $\varepsilon_i = \begin{bmatrix} \tau_R  - \tau_i \\ a_R - a_i \end{bmatrix}$ for each $i \in \mathcal{S}$. Then, define $$\tilde{x}_{\varepsilon} := (\varepsilon, \tilde{x}) \in \widetilde{\mathcal{X}}_{\varepsilon} := \reals^{2(N)} \times \tilde{\mathcal{X}}$$ For each $\tilde{x}_{\varepsilon} \in \widetilde{C}_{\varepsilon} := \widetilde{\mathcal{X}}_{\varepsilon}$, the flow map is given by $$\widetilde{F}_{\varepsilon} (\tilde{x}_{\varepsilon}) = \big ( A_F \varepsilon, \widetilde{F}(\tilde{x}) \big )$$ where $$A_F = \begin{bmatrix}
A_f & \cdots & 0 \\
\vdots & \ddots & \vdots \\
0 & \cdots & A_f
\end{bmatrix}$$
\noindent
 is a blaock diagonal matrix with $N$ entries equal to  $A_f = \begin{bmatrix}
0 & 1 \\ 0 & 0
\end{bmatrix}$.

The discrete dynamics of the protocol are modeled through the jump map $\widetilde{G}_{\varepsilon}(\tilde{x}) := \{\widetilde{G}_{\varepsilon}^{i}(\tilde{x}) : \tilde{x}_{\varepsilon} \in \widetilde{D}_{\varepsilon}^i, i \in \mathcal{S} \}$ where 
\begin{equation*}
\widetilde{G}_{\varepsilon}^i(\tilde{x}_{\varepsilon}) {\mequals} \begin{cases} 
\widetilde{G}_{\varepsilon_1}^i(\tilde{x}_{\varepsilon}) \hspace{1mm} \mbox{if } \tilde{x}_{\varepsilon} \in \widetilde{D}_{\varepsilon_1}^i {\setminus} (\widetilde{D}_{\varepsilon_2}^i {\cup} \widetilde{D}_{\varepsilon_3}^i {\cup} \widetilde{D}_{\varepsilon_4}^i {\cup} \widetilde{D}_{\varepsilon_5}^i {\cup} \widetilde{D}_{\varepsilon_6}^i)  \\ 
\widetilde{G}_{\varepsilon_2}^i(\tilde{x}_{\varepsilon}) \hspace{1mm} \mbox{if } \tilde{x}_{\varepsilon} \in \widetilde{D}_{\varepsilon_2}^i {\setminus} (\widetilde{D}_{\varepsilon_1}^i {\cup} \widetilde{D}_{\varepsilon_3}^i {\cup} \widetilde{D}_{\varepsilon_4}^i {\cup} \widetilde{D}_{\varepsilon_5}^i {\cup} \widetilde{D}_{\varepsilon_6}^i) \\ 
\widetilde{G}_{\varepsilon_3}^i(\tilde{x}_{\varepsilon}) \hspace{1mm} \mbox{if } \tilde{x}_{\varepsilon} \in \widetilde{D}_{\varepsilon_3}^i {\setminus} (\widetilde{D}_{\varepsilon_1}^i {\cup} \widetilde{D}_{\varepsilon_2}^i {\cup} \widetilde{D}_{\varepsilon_4}^i {\cup} \widetilde{D}_{\varepsilon_5}^i {\cup} \widetilde{D}_{\varepsilon_6}^i) \\ 
\widetilde{G}_{\varepsilon_4}^i(\tilde{x}_{\varepsilon}) \hspace{1mm} \mbox{if } \tilde{x}_{\varepsilon} \in \widetilde{D}_{\varepsilon_4}^i {\setminus} (\widetilde{D}_{\varepsilon_1}^i {\cup} \widetilde{D}_{\varepsilon_2}^i {\cup} \widetilde{D}_{\varepsilon_3}^i {\cup} \widetilde{D}_{\varepsilon_5}^i {\cup} \widetilde{D}_{\varepsilon_6}^i) \\ 
\widetilde{G}_{\varepsilon_5}^i(\tilde{x}_{\varepsilon}) \hspace{1mm} \mbox{if } \tilde{x}_{\varepsilon} \in \widetilde{D}_{\varepsilon_5}^i {\setminus} (\widetilde{D}_{\varepsilon_1}^i {\cup} \widetilde{D}_{\varepsilon_2}^i {\cup} \widetilde{D}_{\varepsilon_3}^i {\cup} \widetilde{D}_{\varepsilon_4}^i {\cup} \widetilde{D}_{\varepsilon_6}^i) \\ 
\widetilde{G}_{\varepsilon_6}^i(\tilde{x}_{\varepsilon}) \hspace{1mm} \mbox{if } \tilde{x}_{\varepsilon} \in \widetilde{D}_{\varepsilon_6}^i {\setminus} (\widetilde{D}_{\varepsilon_1}^i {\cup} \widetilde{D}_{\varepsilon_2}^i {\cup} \widetilde{D}_{\varepsilon_3}^i {\cup} \widetilde{D}_{\varepsilon_4}^i {\cup} \widetilde{D}_{\varepsilon_5}^i)
\end{cases}
\end{equation*}
\noindent
where
\begin{equation*}
\begin{aligned}
& \widetilde{G}_{\varepsilon_1}^i(\tilde{x}_{\varepsilon}) {=} \begin{bmatrix}
\renewcommand\arraystretch{0.7}
\varepsilon \\
\widetilde{G}_1^i(\tilde{x})
\end{bmatrix},
\hspace{1mm}
\widetilde{G}_{\varepsilon_2}^i(\tilde{x}_{\varepsilon}) {=} \begin{bmatrix}
\renewcommand\arraystretch{0.7}
\varepsilon \\
\widetilde{G}_2^i(\tilde{x})
\end{bmatrix},
\hspace{1mm}
\widetilde{G}_{\varepsilon_3}^i(\tilde{x}_{\varepsilon}) {=} \begin{bmatrix}
\renewcommand\arraystretch{0.7}
\varepsilon \\
\widetilde{G}_3^i(\tilde{x})
\end{bmatrix}, \\
& \widetilde{G}_{\varepsilon_4}^i(\tilde{x}_{\varepsilon}) {=} \begin{bmatrix}
\renewcommand\arraystretch{0.7}
\varepsilon \\
\widetilde{G}_4^i(\tilde{x})
\end{bmatrix},
\hspace{1mm}
\widetilde{G}_{\varepsilon_5}^i(\tilde{x}_{\varepsilon}) {=} \begin{bmatrix}
\renewcommand\arraystretch{0.7}
\varepsilon \\
\widetilde{G}_5^i(\tilde{x})
\end{bmatrix},
\hspace{1mm}
\widetilde{G}_{\varepsilon_6}^i(\tilde{x}_{\varepsilon}) {=} \begin{bmatrix}
\renewcommand\arraystretch{0.7}
\begin{bmatrix} \varepsilon_1, \ldots, \varepsilon_i^+, \ldots, \varepsilon_{N}
\end{bmatrix}^\top \\
\widetilde{G}_6^i(\tilde{x})
\end{bmatrix}
\end{aligned}
\end{equation*}
\normalsize
\noindent
where $$\varepsilon_i^+ = \begin{bmatrix}
\tau_R - \big ( \tau_i - K_{\tilde{o}}(\tilde{x}) \big ) \\
a_R - \big ( a_i + K_a(\tilde{x}) \big )
\end{bmatrix}$$
\noindent
These discrete dynamics apply for $x$ in $\widetilde{D}_{\varepsilon} := \widetilde{D}_{\varepsilon}^1 \cup \widetilde{D}_{\varepsilon}^2 \cup \cdots \cup \widetilde{D}_{\varepsilon}^i \cup \cdots \cup \widetilde{D}_{\varepsilon}^{N}$, where $\widetilde{D}_{\varepsilon}^i := \widetilde{D}_{\varepsilon_1}^i \cup \widetilde{D}_{\varepsilon_2}^i \cup \widetilde{D}_{\varepsilon_3}^i \cup \widetilde{D}_{\varepsilon_4}^i \cup \widetilde{D}_{\varepsilon_5}^i \cup \widetilde{D}_{\varepsilon_6}^i$ and
\begin{equation*}
\begin{aligned}
\widetilde{D}_{\varepsilon_1}^i & := \{\tilde{x}_{\varepsilon} \in \widetilde{\mathcal{X}}_{\varepsilon} : \tau = 0, p = 0, \ell = i \}, \hspace{1cm}
\widetilde{D}_{\varepsilon_2}^i := \{\tilde{x}_{\varepsilon} \in \widetilde{\mathcal{X}}_{\varepsilon} : \tau = 0, p = 1, \ell = i \} \\
\widetilde{D}_{\varepsilon_3}^i & := \{\tilde{x}_{\varepsilon} \in \widetilde{\mathcal{X}}_{\varepsilon} : \tau = 0, p = 2, \ell = i \}, \hspace{1cm}
\widetilde{D}_{\varepsilon_4}^i := \{\tilde{x}_{\varepsilon} \in \widetilde{\mathcal{X}}_{\varepsilon} : \tau = 0, p = 3, \ell = i \} \\
\widetilde{D}_{\varepsilon_5}^i & := \{\tilde{x}_{\varepsilon} \in \widetilde{\mathcal{X}}_{\varepsilon} : \tau = 0, p = 4, \ell = i \}, \hspace{1cm}
\widetilde{D}_{\varepsilon_6}^i := \{\tilde{x}_{\varepsilon} \in \widetilde{\mathcal{X}}_{\varepsilon} : \tau = 0, \tilde{\tau}_{\ell} = 0,\\ 
& \hspace{10.8cm} p = 5, \ell = i \}
\end{aligned}
\end{equation*}
\noindent
This hybrid system is denoted 
\begin{equation} \label{eqn:Hy2_multi}
\widetilde{\HS}_{\varepsilon} = (\widetilde{C}_{\varepsilon}, \widetilde{F}_{\varepsilon}, \widetilde{D}_{\varepsilon}, \widetilde{G}_{\varepsilon})
\end{equation}
and the set to render attractive for the multi-agent model is given by
\begin{equation} \label{set:A_multi}
\widetilde{\A}_{\varepsilon} := \{ \tilde{x}_{\varepsilon} \in \widetilde{\mathcal{X}}_{\varepsilon} : \varepsilon_i = 0 \hspace{1mm} \forall i \in \mathcal{S} \}
\end{equation}

\ifbool{HSLreport}{

With the system defined in this manner, we certify attractivity of $\widetilde{\HS}_{\varepsilon}$ to $\widetilde{\A}_{\varepsilon}$ through an extension of the results for the two-agent model given in Theorem \ref{thm}. In the same order as the two-agent model, we proceed as follows:
\begin{itemize}
\item Define and give a result for a forward invariant set $\mathcal{M}$ that gives the correct values of  $\m^R$ and $\m^i$ such that the update laws $K_{\tilde{o}}$ and $K_{a}$ give the correct values.
\item Show that the forward invariant set $\mathcal{M}$ is finite time attractive.
\item Utilizing the same Lyapunov-based approach for the two-agent system, a function $\widetilde{V} : \widetilde{\mathcal{X}}_{\varepsilon} \to \reals_{\geq 0}$ is defined as an extension of the function given in (\ref{eqn:lyap_fun}).
\item Infinitesimal properties of $\widetilde{V}$ are established across flows and jumps of the system $\widetilde{\HS}_{\varepsilon}$ via separate lemmas.
\end{itemize}
\noindent
Following these results, the main result is presented showing attractivity of the set of interest. To demonstrate the feasibility of the algorithm, a numerical example featuring a three-agent system illustrating the convergence properties of $\widetilde{\HS}_{\varepsilon}$ is simulated.

}{

With the system defined in this manner, we certify attractivity of $\widetilde{\HS}_{\varepsilon}$ to $\widetilde{\A}_{\varepsilon}$ through an extension of the results for the two-agent model. In the same order as the two-agent model, we proceed as follows:
\begin{itemize}
\item Define and give a result for a forward invariant set $\mathcal{M}$ that gives the correct values of  $\m^R$ and $\m^i$ such that the update laws $K_{\tilde{o}}$ and $K_{a}$ give the correct values.
\item Show that the forward invariant set $\mathcal{M}$ is finite time attractive.
\item Utilizing the same Lyapunov-based approach for the two-agent system, a function $\widetilde{V} : \widetilde{\mathcal{X}}_{\varepsilon} \to \reals_{\geq 0}$ is defined as an extension of the function given in (\ref{eqn:lyap_fun}).
\item Infinitesimal properties of $\widetilde{V}$ are established across flows and jumps of the system $\widetilde{\HS}_{\varepsilon}$ via separate lemmas.
\end{itemize}
\noindent
Due to the similar nature of these incremental results, we have postponed the details of the results and their proofs to ... Instead, the main result is presented and to  demonstrate the feasibility of the model, a numerical example illustrating the convergence properties of $\widetilde{\HS}_{\varepsilon}$ is included in the following section where a three-agent system is simulated.}

\subsubsection{Properties of the Multi-agent model}

Given that the multi-agent model is an extension of the two-agent model, we remind the reader that the accuracy of the update laws $K_{\tilde{o}}$ and $K_{a}$ given in (\ref{eqn:offset_law2}) and (\ref{eqn:skew_law2}) depend on the assigned values of the memory buffers, namely $\m^R$ and $\m^i$ for the multi-agent model. Therefore, to facilitate the analysis of $\widetilde{\HS}_{\varepsilon}$ in rendering the set $\A_{\varepsilon}$ asymptotically attractive, we define a new set modified from the set $\mathcal{M}$ given in (\ref{set:M}) that gives the state space of values of $\m^R$ and $\m^i$ for which the update laws $K_{\tilde{o}}$ and $K_{a}$ give the correct values. In particular, consider the set \begin{equation}
\mathcal{M} := \mathcal{M}_1 \cup \mathcal{M}_2 \cup \mathcal{M}_3 \cup \mathcal{M}_4 \cup \mathcal{M}_5 \cup \mathcal{M}_6
\end{equation}
\noindent
where
\ifbool{conf}{
\begin{equation*} 
\begin{aligned}
\mathcal{M}_1 & := \{ \tilde{x}_{\varepsilon} \in \mathcal{X}_{\varepsilon} : p {=} 0, q {=} 0 \} \\
\mathcal{M}_2 & := \{ \tilde{x}_{\varepsilon} \in \mathcal{X}_{\varepsilon} : p {=} 1, q {=} 1, \m_1^R {\minus} \rho_R(\tilde{x}_{\varepsilon}, 0)  = 0 \} \\
\mathcal{M}_3 & := \{ \tilde{x}_{\varepsilon} \in \mathcal{X}_{\varepsilon} : p {=} 2, q {=} 0, \m_1^i {\minus} \rho_i(\tilde{x}_{\varepsilon}, 0)  = 0, \\ 
& \hspace{7.5mm} \m_2^i {\minus} \rho_R(\tilde{x}_{\varepsilon}, d) = 0 \} \\
\mathcal{M}_4 & := \{ \tilde{x}_{\varepsilon} \in \mathcal{X}_{\varepsilon} : p {=} 3, q {=} 1, \m_1^i {\minus} \rho_i(\tilde{x}_{\varepsilon}, 0) = 0, \\
& \hspace{7.5mm} \m_2^i {\minus} \rho_i(\tilde{x}_{\varepsilon}, d) = 0, \m_3^i {\minus} \rho_R(\tilde{x}_{\varepsilon}, c{+}d) = 0 \} \\
\mathcal{M}_5 & := \{ \tilde{x}_{\varepsilon} \in \mathcal{X}_{\varepsilon} : p {=} 4, q {=} 0, \m_1^R {\minus} \rho_R(\tilde{x}_{\varepsilon}, 0) = 0, \\
& \hspace{7.5mm} \m_2^R {\minus} \rho_i(\tilde{x}_{\varepsilon}, d) = 0, \m_3^R {\minus} \rho_i(\tilde{x}_{\varepsilon},  c{+}d) = 0, \\
& \hspace{7.5mm}  \m_4^R {\minus} \rho_R(\tilde{x}_{\varepsilon}, 2d{+}c) {=} 0 \} \\
\mathcal{M}_6 & := \{ \tilde{x}_{\varepsilon} \in \mathcal{X}_{\varepsilon} : p {=} 5, q {=} 1, \m_1^R {\minus} \rho_R(\tilde{x}_{\varepsilon}, 0)  = 0,  \\
& \hspace{7.5mm} \m_2^R {\minus} \rho_R(\tilde{x}_{\varepsilon}, c) = 0, \m_3^R {-} \rho_i(\tilde{x}_{\varepsilon},c{+}d) = 0, \\
& \hspace{7.5mm}  \m_4^R {\minus} \rho_i(\tilde{x}_{\varepsilon},2c{+}d) = 0, \m_5^R {\minus} \rho_R(\tilde{x}_{\varepsilon}, 2c{+}2d) = 0 \}
\end{aligned}
\end{equation*}
}{
\begin{equation*} 
\begin{aligned}
\mathcal{M}_1 & := \{ \tilde{x}_{\varepsilon} \in \mathcal{X}_{\varepsilon} : p {=} 0, q {=} 0 \} \\
\mathcal{M}_2 & := \{ \tilde{x}_{\varepsilon} \in \mathcal{X}_{\varepsilon} : p {=} 1, q {=} 1, \m_1^R {\minus} \rho_R(\tilde{x}_{\varepsilon}, 0)  = 0 \} \\
\mathcal{M}_3 & := \{ \tilde{x}_{\varepsilon} \in \mathcal{X}_{\varepsilon} : p {=} 2, q {=} 0, \m_1^i {\minus} \rho_i(\tilde{x}_{\varepsilon}, 0)  = 0, \\ 
& \hspace{7.5mm} \m_2^i {\minus} \rho_R(\tilde{x}_{\varepsilon}, d) = 0 \} \\
\mathcal{M}_4 & := \{ \tilde{x}_{\varepsilon} \in \mathcal{X}_{\varepsilon} : p {=} 3, q {=} 1, \m_1^i {\minus} \rho_i(\tilde{x}_{\varepsilon}, 0) = 0, \\
& \hspace{7.5mm} \m_2^i {\minus} \rho_i(\tilde{x}_{\varepsilon}, d) = 0, \m_3^i {\minus} \rho_R(\tilde{x}_{\varepsilon}, c{+}d) = 0 \} \\
\mathcal{M}_5 & := \{ \tilde{x}_{\varepsilon} \in \mathcal{X}_{\varepsilon} : p {=} 4, q {=} 0, \m_1^R {\minus} \rho_R(\tilde{x}_{\varepsilon}, 0) = 0, \\
& \hspace{7.5mm} \m_2^R {\minus} \rho_i(\tilde{x}_{\varepsilon}, d) = 0, \m_3^R {\minus} \rho_i(\tilde{x}_{\varepsilon},  c{+}d) = 0, \\
& \hspace{7.5mm}  \m_4^R {\minus} \rho_R(\tilde{x}_{\varepsilon}, 2d{+}c) {=} 0 \} \\
\mathcal{M}_6 & := \{ \tilde{x}_{\varepsilon} \in \mathcal{X}_{\varepsilon} : p {=} 5, q {=} 1, \m_1^R {\minus} \rho_R(\tilde{x}_{\varepsilon}, 0)  = 0,  \\
& \hspace{7.5mm} \m_2^R {\minus} \rho_R(\tilde{x}_{\varepsilon}, c) = 0, \m_3^R {-} \rho_i(\tilde{x}_{\varepsilon},c{+}d) = 0, \\
& \hspace{7.5mm}  \m_4^R {\minus} \rho_i(\tilde{x}_{\varepsilon},2c{+}d) = 0, \m_5^R {\minus} \rho_R(\tilde{x}_{\varepsilon}, 2c{+}2d) = 0 \}
\end{aligned}
\end{equation*}
}
\noindent
and
\begin{equation}
\begin{aligned}
\rho_R(\tilde{x}_{\varepsilon}, \beta) & = \Big (\tau_{R} - a_{R} \big (\big ((1{-}q)c {+} qd \big ) - \tau \big ) \Big ) {-} a_{R} (\beta) \\
\rho_i(\tilde{x}_{\varepsilon}, \beta) & = \Big (\tau_{i} - a_{i} \big (((1{-}q)c {+} qd)- \tau \big ) \Big )  {-} a_{i} (\beta)
\end{aligned}
\end{equation}
\noindent
for some $\beta > 0$, i.e., $\beta = 0, c + d, 2c+d, 2c + 2d$.

\begin{lemma} \label{lem:fwd_inv_M_2}
The set $\mathcal{M}$ is forward invariant for the hybrid system $\widetilde{\HS}_{\varepsilon}$.
\end{lemma}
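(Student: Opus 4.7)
The plan is to mirror, mutatis mutandis, the argument used for Lemma~\ref{lem:fwd_inv_M}: since the multi-agent system $\widetilde{\HS}_{\varepsilon}$ executes the very same six-stage message-exchange protocol as $\HS_{\varepsilon}$ does for the active reference-child pair indexed by $\ell$, the analysis reduces pairwise, and the new components (the index variable $\ell\in\mathcal{S}$ and the timer vector $\tilde\tau\in[0,3c+3d]^{N}$) enter only in ways that do not affect membership in $\mathcal{M}$. The strategy is to pick an arbitrary initial condition $\phi(0,0)\in\mathcal{M}$ and argue separately on flows and on the six possible jumps triggered by $\widetilde{D}^i_{\ell}$.

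For flows, I would observe that the flow map $\widetilde{F}$ prescribes $\dot\m^R=\dot\m^S=\dot p=\dot q=\dot\ell=0$, so the logic variables and the memory buffers are frozen over any interval of flow. The only nontrivial claim is that each defining equation of $\mathcal{M}_\ell$, namely $\m_\cdot^{\star}-\rho_{\star}(\tilde x_\varepsilon,\beta)=0$ for $\star\in\{R,i\}$, is preserved. Computing the gradient of $\rho_R$ and $\rho_i$ (with respect to the components of $\tilde x_\varepsilon$ that actually vary during flow, namely $\tau_R,\tau_{S_i},\tau$) and pairing with $\widetilde{F}$ gives, just as in the proof of Lemma~\ref{lem:fwd_inv_M}, $\dot\rho_R=a_R-a_R=0$ and $\dot\rho_i=a_i-a_i=0$. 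Hence $\mathcal{M}$ is preserved along flows.

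For jumps, I would proceed case by case, one for each $\ell\in\{1,\dots,6\}$ and each $i\in\mathcal{S}$. For $\ell\in\{1,2,3,4,5\}$ the maps $\widetilde{G}^i_\ell$ replicate the structure of $G_{\varepsilon_\ell}$: they update the $p$ and $q$ logic variables identically, reset $\tau$ to $c$ or $d$ as appropriate, and overwrite one slot of $\m^R$ or $\m^S$ with the correct current clock reading $\tau_R$ or $\tau_{S_i}$ while the remaining slots shift. Evaluating $\rho_R(\tilde x_\varepsilon^+,\beta)$ and $\rho_i(\tilde x_\varepsilon^+,\beta)$ at each post-jump point yields exactly the values already recorded in $\m^R$ and $\m^S$ before the jump (which, by the inductive assumption $\tilde x_\varepsilon\in\mathcal{M}_\ell$, are equal to the correct $\rho$-values at the shifted time offsets). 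This is the bookkeeping step carried out explicitly in the proof of Lemma~\ref{lem:fwd_inv_M}, and transplants verbatim. Thus $\widetilde{G}^i_\ell(\mathcal{M}_\ell\cap\widetilde{D}^i_\varepsilon)\subset\mathcal{M}_{\ell+1}$.

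For the terminal jump $\widetilde{G}^i_6$ (including the wraparound $\widetilde{G}^{N}_6$), the map resets $p\to 0$, $q\to 0$, increments $\ell$ (or cycles it back to $1$), and resets the corresponding $\tilde\tau_\ell$ component. Because $\mathcal{M}_1=\{\tilde x_\varepsilon:p=0,q=0\}$ imposes no constraint on the memory buffers or on $\ell$ or $\tilde\tau$, the image lies in $\mathcal{M}_1\subset\mathcal{M}$ regardless of the particular post-jump values of $\ell$, $\tilde\tau$, $\m^R$, and $\m^S$. Together with the five previous cases this closes the induction. The only real subtlety, and the step I expect will need the most care in a full write-up, is the wraparound update $\widetilde{G}^{N}_6$ together with the interaction between the index $\ell$ and which child node's clock correction is applied through $K_{\tilde o}$ and $K_a$; since the $\varepsilon_i$ updates do not feature in the definition of $\mathcal{M}$, this causes no difficulty, and one concludes that $\mathcal{M}$ is forward invariant.
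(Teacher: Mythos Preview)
Your proposal is correct and follows essentially the same approach as the paper, which simply states that the result follows from the proof of Lemma~\ref{lem:fwd_inv_M} because the variables $\tau$, $p$, $q$, $\m^R$, and $\m^i$ have identical dynamics to their counterparts in the two-agent model. Your write-up is a more detailed expansion of that one-line deferral, and your observation that the new components $\ell$ and $\tilde\tau$ do not enter the defining constraints of $\mathcal{M}$ is exactly the point that makes the reduction go through.
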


\begin{proof}
This result follows from the proof of Lemma \ref{lem:fwd_inv_M} since the variables $\tau$, $p$, $q$, $\m^R$, and $\m^i$ have identical dynamics to the corresponding variables in the two-agent model.
\end{proof}

\begin{lemma} \label{lem:finite_M_2}
Let constants $d \geq c > 0$ be given. For each maximal solution $\phi$ to $\widetilde{\HS}_{\varepsilon}$, there exists a $T^* \geq 0$ such that for any $(t,j) \in \mbox{dom } \phi$ with $t + j \geq T^*$, $\phi(t,j) \in \mathcal{M}$.
\end{lemma}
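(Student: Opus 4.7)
The plan is to mirror the argument used to prove Lemma \ref{lem:finite_M}, exploiting the fact that the dynamics of $p$, $q$, $\tau$, $\m^R$, and $\m^i$ in $\widetilde{\HS}_{\varepsilon}$ replicate those of $\HS_{\varepsilon}$ once the active index $\ell$ is held fixed. First, I would establish (by the same tangent-cone / Lipschitz / $\widetilde{G}_\varepsilon(\widetilde{D}_\varepsilon) \subset \widetilde{C}_\varepsilon \cup \widetilde{D}_\varepsilon$ argument as in Lemma \ref{lem:hy_comp}) that every maximal solution $\phi$ to $\widetilde{\HS}_\varepsilon$ is complete. This step is essentially a rewrite of the two-agent argument and should be immediate.

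Next, I would observe two structural facts about the logic variables along any $\phi$. The flow map $\widetilde{F}_\varepsilon$ enforces $\dot{p}=\dot{q}=\dot{\ell}=0$, so these variables remain constant during flows. At jumps with $\ell=i$, each of the maps $\widetilde{G}^i_k$, $k\in\{1,2,3,4,5\}$, leaves $\ell$ unchanged and assigns $p^+ = p+1$, while $\widetilde{G}^i_6$ resets $p^+=0$, $q^+=0$, and advances $\ell^+ = i+1$ (or wraps to $1$ when $i=N$). Since $\tau$ flows downward at unit rate on the bounded interval $[0,d]$ and each jump resets it to a value in $\{c,d\}$, no interval of flow can be infinite, so completeness forces infinitely many jumps in $\mbox{dom}\,\phi$.

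Combining these observations, $p$ monotonically cycles through $\{0,1,2,3,4,5\}$ for whatever value of $\ell$ is currently active, and the cycle must complete in finite hybrid time. Let $(t^*,j^*)\in\mbox{dom}\,\phi$ be the first hybrid time instant at which a jump triggered by $\widetilde{D}^i_6$ (for some $i\in\mathcal{S}$) occurs; after this jump $\phi(t^*,j^*+1)\in\mathcal{M}_1\subset\mathcal{M}$. Setting $T^* := t^* + j^* + 1$ and invoking the forward invariance of $\mathcal{M}$ established in Lemma \ref{lem:fwd_inv_M_2} then yields $\phi(t,j)\in\mathcal{M}$ for every $(t,j)\in\mbox{dom}\,\phi$ with $t+j\geq T^*$.

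The main obstacle I anticipate is verifying the bookkeeping of the memory buffers $\m^R$ and $\m^i$ during the cycle for a fixed $\ell=i$: one has to confirm that, regardless of what the previously active agents wrote into the buffers before $\ell$ last rotated to $i$, the first full $\widetilde{G}^i_1,\ldots,\widetilde{G}^i_6$ cycle encountered from initialization overwrites all entries consistently with the constraints defining $\mathcal{M}_2,\mathcal{M}_3,\ldots,\mathcal{M}_6$. Since each $\widetilde{G}^i_k$ writes new timestamps into the same slots that $G_{\varepsilon_k}$ did in the two-agent case, the relevant projection of the cycle is identical to the one analyzed in Lemma \ref{lem:finite_M}, and no additional machinery is needed beyond this observation.
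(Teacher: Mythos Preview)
Your proposal is correct and follows essentially the same approach as the paper's proof: both argue that $p$ is constant during flows, increments monotonically through $\{0,\ldots,5\}$ at jumps, and that the first $\widetilde{G}^i_{\varepsilon_6}$ jump (guaranteed by completeness and bounded flow intervals) lands the solution in $\mathcal{M}_1$, after which forward invariance from Lemma~\ref{lem:fwd_inv_M_2} finishes the argument. Your write-up is in fact more careful than the paper's---you explicitly justify completeness and the existence of infinitely many jumps---and your concern about buffer bookkeeping is moot since $\mathcal{M}_1$ imposes no constraints on $\m^R,\m^i$.
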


\ifbool{conf}{}{
\ifbool{conf}{\begin{pf}}{
\begin{proof}}
Pick a solution $\phi$ with initial condition $\phi(0,0) \in  C_{\varepsilon} \cup \widetilde{D}_{\varepsilon}$. From the initial condition, the solution flows and jumps according to the dynamics of $\HS$. Observe that when $\phi \in C_{\varepsilon}$, $\dot{p} = 0$ and when $\phi \in \widetilde{D}_{\varepsilon}$, $p^+ = p + 1$ following jumps according to $G^i_{\varepsilon_{\ell}}(\tilde{x}_{\varepsilon})$, $\ell \in \{1,2,3,4,5\}$. However, for jumps according to $G^i_{\varepsilon_{6}}(\tilde{x}_{\varepsilon})$, $p^+ = 0$ then by also noting that $q^+ = 0$, $G^i_{\varepsilon_{6}}(\tilde{x}_{\varepsilon}) \subset \mathcal{M}_6$. Therefore, due to the monotonic behavior of $p$ following jumps according to $G^i_{\varepsilon_{\ell}}(\tilde{x}_{\varepsilon})$, $\ell \in \{1,2,3,4,5\}$, at $t + j \geq T^*$ the solution jumps according to $G^i_{\varepsilon_6}(\tilde{x}_{\varepsilon})$, then following the notions on forward invariance for $\mathcal{M}$ in Lemma \ref{lem:fwd_inv_M_2}, the solution remains in $\mathcal{M}$ as $t + j \to \infty$. 
\ifbool{conf}{\hfill $\square$ \end{pf}}{
\end{proof}}}

\subsubsection{Results}

To show attractivity of $\widetilde{\A}_{\varepsilon}$ in (\ref{set:A_multi}) for $\widetilde{\HS}_{\varepsilon}$ in (\ref{eqn:Hy2_multi}) , we utilize a similar Lyapunov-based approach to the two-node model as presented in (\ref{eqn:Hy}). Define the function $\widetilde{V} : \widetilde{\mathcal{X}}_{\varepsilon} \to \reals_{\geq 0}$ as
\begin{equation} \label{eqn:lyap_fun_2}
\begin{aligned}
\widetilde{V}(\tilde{x}_{\varepsilon}) & = \varepsilon_1^\top \exp(A_f^\top \tilde{\tau}_{1}) P \exp(A_f \tilde{\tau}_{1}) \varepsilon_1 + \ldots + \varepsilon_i^\top \exp(A_f^\top \tilde{\tau}_{i}) P \exp(A_f \tilde{\tau}_{i}) \varepsilon_i + \ldots \\
& \hspace{5mm} + \varepsilon_{N}^\top \exp(A_f^\top \tilde{\tau}_{N}) P \exp(A_f \tilde{\tau}_{N}) \varepsilon_{N} \\
& = \varepsilon^\top Q(\tilde{\tau})^\top \mathcal{P} \hspace{0.5mm} Q(\tilde{\tau}) \varepsilon 
\end{aligned}
\end{equation}
\noindent
defined for each $\tilde{x}_{\varepsilon} \in C_{\varepsilon} \cup \widetilde{D}_{\varepsilon}$ where $Q(\tau_\HS) = \mbox{diag} \big ( \exp(A_F \tau_{\HS_1}), \ldots, \exp(A_F \tau_{\HS_{N}}) \big )$ and $\mathcal{P} = \mbox{diag} \big ( P, \ldots, P \big )$  where $P \succ 0$. Furthermore, note the existence of two positive scalars $\alpha_1$ and $\alpha_2$ such that
\begin{equation}
\begin{aligned}
\alpha_1|\tilde{x}_{\varepsilon}|^2_{\widetilde{\A}_{\varepsilon}} \leq V(\tilde{x}_{\varepsilon}) \leq \alpha_2|\tilde{x}_{\varepsilon}|^2_{\widetilde{\A}_{\varepsilon}}
\end{aligned}
\end{equation}
\noindent
The function $\widetilde{V}$ satisfies the following infinitesimal properties.

\begin{lemma} \label{lem:V2_flows}
Let the hybrid system $\widetilde{\HS}_{\varepsilon}$ with constants $d \geq c > 0$ be given. For each $\tilde{x}_{\varepsilon} \in \widetilde{C}$, $$\langle \nabla V(\tilde{x}_{\varepsilon}), \widetilde{F}_{\varepsilon} (\tilde{x}_{\varepsilon}) \rangle = 0$$
\end{lemma}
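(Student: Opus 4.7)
The plan is to reduce the computation to a per-agent check by exploiting the block-diagonal structure of $Q(\tilde\tau)$ and $\mathcal{P}$. Since both are block diagonal with the $i$-th block equal to $\exp(A_f \tilde\tau_i)$ and $P$ respectively, the Lyapunov function decouples as $\widetilde V(\tilde x_\varepsilon) = \sum_{i=1}^{N} W_i(\varepsilon_i, \tilde\tau_i)$, where $W_i(\varepsilon_i, \tilde\tau_i) := \varepsilon_i^\top \exp(A_f^\top \tilde\tau_i)\, P\, \exp(A_f \tilde\tau_i)\, \varepsilon_i$. Since $\widetilde F_\varepsilon$ acts affinely on each coordinate and $W_i$ depends only on $\varepsilon_i$ and $\tilde\tau_i$, it suffices to verify that $\langle \nabla W_i(\varepsilon_i,\tilde\tau_i), \widetilde F_\varepsilon(\tilde x_\varepsilon)\rangle = 0$ for every $i \in \mathcal{S}$.

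Next, I would read off the relevant components of the flow map $\widetilde F$: one has $\dot\tau_R = a_R$, $\dot\tau_i = a_i$, $\dot a_R = \dot a_i = 0$, and $\dot{\tilde\tau}_i = -1$. Combining these with $\varepsilon_i = (\tau_R - \tau_i,\; a_R - a_i)$ gives $\dot\varepsilon_i = A_f \varepsilon_i$, in agreement with the $A_F \varepsilon$ component of $\widetilde F_\varepsilon$.

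Writing $\Phi_i := \exp(A_f \tilde\tau_i)$ and applying the chain rule, together with the commutation identity $A_f \Phi_i = \Phi_i A_f$ (and its transpose), I obtain
\begin{equation*}
\dot W_i = \varepsilon_i^\top A_f^\top \Phi_i^\top P \Phi_i \varepsilon_i + \varepsilon_i^\top \Phi_i^\top P \Phi_i A_f \varepsilon_i - \varepsilon_i^\top A_f^\top \Phi_i^\top P \Phi_i \varepsilon_i - \varepsilon_i^\top \Phi_i^\top P A_f \Phi_i \varepsilon_i,
\end{equation*}
where the first pair of terms comes from $\dot\varepsilon_i^\top(\cdot)\varepsilon_i + \varepsilon_i^\top(\cdot)\dot\varepsilon_i$ and the last pair comes from $\dot{\tilde\tau}_i \cdot \frac{d}{d\tilde\tau_i}[\Phi_i^\top P \Phi_i] = -(A_f^\top \Phi_i^\top P \Phi_i + \Phi_i^\top P A_f \Phi_i)$ using $\dot{\tilde\tau}_i = -1$. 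The first and third terms cancel directly; the second and fourth cancel after one application of $\Phi_i A_f = A_f \Phi_i$. Summing over $i$ yields $\langle \nabla \widetilde V, \widetilde F_\varepsilon \rangle = 0$ on $\widetilde C$.

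There is no substantive obstacle here; it is a routine computation once the decoupling is made. The mild subtlety, and the reason this lemma is cleaner than its two-agent analog \ref{lem:V_flows}, is that the exponent in $\widetilde V$ is the plain timer $\tilde\tau_i$ rather than the affine combination $\tau\, h(q) + d(5-p)$ used in (\ref{eqn:lyap_fun}). Here $\frac{d}{dt}\tilde\tau_i \equiv -1$ regardless of the logic variables, so the residual $\alpha$-type term that produced the $C_{\varepsilon_1}$ case in Lemma \ref{lem:V_flows} simply does not appear, and the Lie derivative vanishes identically on $\widetilde C$ without any sign-definite estimate.
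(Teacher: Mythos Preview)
Your proof is correct and is precisely the direct calculation the paper invokes; the paper's own proof consists of the single sentence ``This result follows through direct calculation of $\langle \nabla V(\tilde{x}_{\varepsilon}), \widetilde{F}_{\varepsilon} (\tilde{x}_{\varepsilon}) \rangle$.'' Your decoupling into the per-agent terms $W_i$ and the observation about why the residual growth term from Lemma~\ref{lem:V_flows} disappears here are accurate and more informative than what the paper provides.
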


\begin{proof}
This result follows through direct calculation of $\langle \nabla V(\tilde{x}_{\varepsilon}), \widetilde{F}_{\varepsilon} (\tilde{x}_{\varepsilon}) \rangle$.
\end{proof}

\begin{lemma} \label{lem:V2_jumps}
Let the hybrid system $\widetilde{\HS}_{\varepsilon}$ with constants $d \geq c > 0$ be given. If there exist a constant $\mu > 0$ and a positive definite symmetric matrix  $P$  such that 
\begin{equation} \label{eqn:lyap_cond_2}
\begin{aligned}
A_g^{\top} \exp \big ( (3c + 3d) A_f^{\top} \big ) P \exp \big ( (3c + 3d) A_f \big ) A_g  -  P \prec 0
\end{aligned}
\end{equation}
is satisfied where $A_{g} = \begin{bmatrix} 0 & \gamma_1 \\ 0 &  1 {\minus} \mu \gamma_2 \end{bmatrix}$ with $\gamma_1 = \frac{3c + 4d}{2}$ and $\gamma_2 = 2c + 2d$, then for each $\tilde{x}_{\varepsilon} \in \mathcal{M} \cap \widetilde{D}_{\varepsilon}$
\begin{equation}
\begin{aligned}
V(g) - V(\tilde{x}_{\varepsilon}) \leq \begin{cases} 0 & \mbox{\rm if } g \in \widetilde{G}_{\varepsilon}(\tilde{x}_{\varepsilon}) \setminus \widetilde{G}_6^i(\tilde{x}_{\varepsilon} ), i \in \nodes \\
\minus \sigma \varepsilon_i^{\top} \varepsilon_i & \mbox{\rm if } g \in \widetilde{G}_6^i(\tilde{x}_{\varepsilon} ), i \in \nodes
\end{cases}
\end{aligned}
\end{equation}
\noindent
where  $$\sigma \in \bigg ( 0, \lambda_{\rm min} \Big ( A_g^{\top} \exp \big ((3c + 3d) A_f^{\top} \big ) P \exp \big ((3c + 3d) A_f \big ) A_g  -  P \prec 0 \Big ) \bigg )$$ 
\end{lemma}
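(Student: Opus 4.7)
The plan is to exploit the block-diagonal structure of the Lyapunov function in (\ref{eqn:lyap_fun_2}) and reduce the analysis at jumps to a per-node calculation that mirrors the proof of Lemma \ref{lem:V_jumps}. Concretely, I would begin by writing
\begin{equation*}
\widetilde{V}(\tilde{x}_{\varepsilon}) \;=\; \sum_{j \in \mathcal{S}} W_j(\tilde{x}_{\varepsilon}), \qquad W_j(\tilde{x}_{\varepsilon}) := \varepsilon_j^{\top} \exp(A_f^{\top}\tilde{\tau}_j)\, P\, \exp(A_f \tilde{\tau}_j)\,\varepsilon_j,
\end{equation*}
so that each summand depends only on the pair $(\varepsilon_j,\tilde{\tau}_j)$. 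The bookkeeping then reduces to tracking, for every jump type, which of the pairs $(\varepsilon_j,\tilde{\tau}_j)$ is actually altered.

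Next I would dispatch the easy cases. Fix $i \in \mathcal{S}$ and $\tilde{x}_{\varepsilon} \in \mathcal{M} \cap \widetilde{D}^{i}_{\varepsilon_\ell}$ with $\ell \in \{1,2,3,4,5\}$. Inspecting the definition of $\widetilde{G}^{i}_{\varepsilon_\ell}$ shows that the $\varepsilon$ component is unchanged and the full timer vector $\tilde{\tau}$ is unchanged (only $\tau$, $p$, $q$ and the memory buffers are updated). Hence every $W_j$ is invariant under these maps and
\begin{equation*}
\widetilde{V}\!\left(\widetilde{G}^{i}_{\varepsilon_\ell}(\tilde{x}_{\varepsilon})\right) - \widetilde{V}(\tilde{x}_{\varepsilon}) \;=\; 0.
\end{equation*}

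The substantive case is $\ell = 6$. Because $\tilde{x}_{\varepsilon} \in \widetilde{D}^{i}_{\varepsilon_6}$ forces $\tilde{\tau}_i = 0$ and $\widetilde{G}^{i}_{6}$ resets only $\tilde{\tau}_i$ (to $3c+3d$) and only the $i$-th component of $\varepsilon$, every $W_j$ with $j \ne i$ is preserved. For the $i$-th term, I would reuse the algebra from the proof of Lemma \ref{lem:V_jumps} verbatim: since $\tilde{x}_{\varepsilon} \in \mathcal{M}_6$, the memory buffer $\m^R$ contains the correctly timestamped values, so $K_{\tilde{o}}(\m^R) = -\varepsilon_{\tau,i} + \gamma_1 \varepsilon_{a,i}$ and $K_{a}(\m^R,\tau_{S_i}) = \mu\gamma_2 \varepsilon_{a,i}$, whence $\varepsilon_i^{+} = A_g \varepsilon_i$ with the same $A_g$ as in Lemma \ref{lem:V_jumps}. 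Substituting gives
\begin{equation*}
\widetilde{V}\!\left(\widetilde{G}^{i}_{6}(\tilde{x}_{\varepsilon})\right) - \widetilde{V}(\tilde{x}_{\varepsilon}) \;=\; \varepsilon_i^{\top}\Bigl( A_g^{\top} \exp\bigl((3c+3d)A_f^{\top}\bigr)\, P\, \exp\bigl((3c+3d)A_f\bigr)\, A_g \;-\; P \Bigr)\varepsilon_i,
\end{equation*}
because the pre-jump $W_i$ reduces to $\varepsilon_i^{\top} P \varepsilon_i$ (thanks to $\tilde{\tau}_i=0$) while the post-jump $W_i$ evaluates at $\tilde{\tau}_i^{+}=3c+3d$. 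The LMI (\ref{eqn:lyap_cond_2}) makes the bracketed matrix negative definite, so any $\sigma$ strictly below the absolute value of its largest (least negative) eigenvalue satisfies $\widetilde{V}(\widetilde{G}^{i}_{6}(\tilde{x}_{\varepsilon})) - \widetilde{V}(\tilde{x}_{\varepsilon}) \leq -\sigma\,\varepsilon_i^{\top}\varepsilon_i$, yielding the claimed bound.

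The expected obstacle is essentially notational rather than technical: one must verify carefully that the multi-agent jump maps really do leave the off-diagonal blocks $(\varepsilon_j,\tilde{\tau}_j)$, $j \ne i$, untouched, and that the forward-invariance property of $\mathcal{M}$ established in Lemma \ref{lem:fwd_inv_M_2} legitimately transfers the memory-buffer identities used in Lemma \ref{lem:V_jumps} to the multi-agent setting. Once that bookkeeping is in place, the Lyapunov decrement reduces to a single-node quadratic form and the conclusion follows from the same LMI-plus-continuity argument as in the two-node case.
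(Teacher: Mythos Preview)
Your proposal is correct and follows essentially the same route as the paper's proof: both exploit the block-diagonal structure of $\widetilde{V}$, observe that the non-$G_6$ jumps leave every pair $(\varepsilon_j,\tilde{\tau}_j)$ untouched so that $\widetilde{V}$ is unchanged, and then, for the $G_6^i$ jump, reduce the difference $\widetilde{V}(g)-\widetilde{V}(\tilde{x}_\varepsilon)$ to the single $i$-th block, invoke the memory-buffer identities from $\mathcal{M}_6$ to write $\varepsilon_i^{+}=A_g\varepsilon_i$, and conclude via the LMI (\ref{eqn:lyap_cond_2}). Your write-up is in fact slightly more explicit than the paper's in justifying why the off-diagonal blocks drop out, but the argument is the same.
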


\begin{proof}
Consider the Lyapunov function candidate $\widetilde{V} : \widetilde{\mathcal{X}}_{\varepsilon} \to \reals_{\geq 0}$ in (\ref{eqn:lyap_fun_2}). For each $\tilde{x}_{\varepsilon} \in \widetilde{D}_{\varepsilon} \setminus \widetilde{D}_{\varepsilon_6}^i$, $i \in \nodes$ and each $g \in \widetilde{G}_{\varepsilon}(\tilde{x}_{\varepsilon}) \setminus \widetilde{G}^i_6(\tilde{x}_{\varepsilon})$, $i \in \nodes$, we have that $V(g) - V(\tilde{x}_{\varepsilon}) = 0$. Now for each  $\tilde{x}_{\varepsilon} \in \widetilde{D}_{\varepsilon_6}^i \cap \mathcal{M}_6$, $i \in \nodes$ we have that $\tilde{\tau}_{i} = 0$ then for each $g \in \widetilde{G}^i_6(\tilde{x}_{\varepsilon})$, $i \in \nodes$, 
\begin{align*}
V(\widetilde{G}^i_{\varepsilon_6}(\tilde{x}_{\varepsilon})) - V(\tilde{x}_{\varepsilon}) & = \\
& \hspace{-3cm} \begin{bmatrix} \varepsilon_i \mplus \begin{bmatrix}
\minus K_{\tilde{o}}(\tilde{x}_{\varepsilon}) \\ K_{a}(\tilde{x}_{\varepsilon})
\end{bmatrix} \end{bmatrix}^{\top} \exp \big ((3c + 3d) A_f^{\top} \big ) P \exp \big ( (3c + 3d) A_f \big ) \begin{bmatrix} \varepsilon_i \mplus \begin{bmatrix} \minus K_{\tilde{o}}(\tilde{x}_{\varepsilon}) \\ K_{a}(\tilde{x}_{\varepsilon})
\end{bmatrix} \end{bmatrix} \\ 
& \hspace{1cm} - \varepsilon_i^{\top} \exp \big ( (0) A_f^{\top} \big ) P \big ( (0) A_f^{\top} \big )  \varepsilon_i \\
& \hspace{-3cm} = \begin{bmatrix} \varepsilon_i {+} \begin{bmatrix}
\minus K_{\tilde{o}}(\tilde{x}_{\varepsilon}) \\ K_{a}(\tilde{x}_{\varepsilon})
\end{bmatrix} \end{bmatrix}^{\top} \exp \big ( (3c + 3d) A_f^{\top} \big ) P \exp \big ( (3c + 3d) A_f \big ) \begin{bmatrix} \varepsilon_i {+} \begin{bmatrix} \minus K_{\tilde{o}}(\tilde{x}_{\varepsilon}) \\ K_{a}(\tilde{x}_{\varepsilon})
\end{bmatrix} \end{bmatrix} \\ 
& \hspace{1cm} - \varepsilon_i^{\top} P  \varepsilon_i
\end{align*}
\noindent
Recall from the proof of Lemma \ref{lem:V_jumps}, that for jumps with reset $g = \widetilde{G}_{\varepsilon_6}^i(x)$ for $x_{\varepsilon} \in \mathcal{M}$, the corrections $K_{\tilde{o}}(p, \m)$, $K_{a}(\m, \tau_{k})$ applied to $\tau_{i}$, $a_{i}$, respectively, give $
K_{\tilde{o}}(\tilde{x}_{\varepsilon}) = - \varepsilon_{\tau} + \gamma_1  \varepsilon_a$ and $K_{a}(\tilde{x}_{\varepsilon}) = \mu \gamma_2 \varepsilon_a$ where $\gamma_1 = \frac{(3c + 4d)}{2}$ and $\gamma_2 = 2c + 2d$.  One then has,
\begin{align*}
\hspace{-5mm} V(\widetilde{G}^i_{\varepsilon_6}(\tilde{x}_{\varepsilon})) - V(\tilde{x}_{\varepsilon}) & = \begin{bmatrix} \varepsilon_i + \begin{bmatrix}
\minus K_{\tilde{o}}(\tilde{x}_{\varepsilon}) \\ K_{a}(\tilde{x}_{\varepsilon})
\end{bmatrix} \end{bmatrix}^{\top} \exp \big ( (3c + 3d) A_f^{\top} \big ) P \exp \big ( (3c + 3d) A_f \big ) \begin{bmatrix} \varepsilon_i + \begin{bmatrix} \minus K_{\tilde{o}}(\tilde{x}_{\varepsilon}) \\ K_{a}(\tilde{x}_{\varepsilon})
\end{bmatrix} \end{bmatrix} \\
& \hspace{5mm} - \varepsilon_i^{\top} P  \varepsilon_i \\
& \hspace{-25mm} = \begin{bmatrix} \varepsilon_{\tau_i} - \varepsilon_{\tau_i} + \gamma_1  \varepsilon_{a_i} \\ \varepsilon_{a_i} - \mu \gamma_2 \varepsilon_{a_i} \end{bmatrix}^{\top} \exp \big ( (3c + 3d) A_f^{\top} \big ) P \exp \big ( (3c + 3d) A_f \big ) \begin{bmatrix} \varepsilon_{\tau_i} - \varepsilon_{\tau_i} + \gamma_1  \varepsilon_{a_i} \\ \varepsilon_{a_i} - \mu \gamma_2 \varepsilon_{a_i} \end{bmatrix} \\
& \hspace{5mm} - \varepsilon_i^{\top} P \varepsilon_i \\
& \hspace{-25mm} = \begin{bmatrix} I \varepsilon_i + \begin{bmatrix}
\minus 1 & \gamma_1 \\ 0 & \minus \mu \gamma_2
\end{bmatrix} \varepsilon_i \end{bmatrix}^{\top} \exp \big ( (3c + 3d) A_f^{\top} \big ) P \exp \big ( (3c + 3d) A_f \big ) \begin{bmatrix} I \varepsilon_i + \begin{bmatrix}
\minus 1 & \gamma_1 \\ 0 & \minus \mu \gamma_2
\end{bmatrix} \varepsilon_i \end{bmatrix} \\
& \hspace{5mm} - \varepsilon_i^{\top} P  \varepsilon_i \\
& \hspace{-25mm} = \begin{bmatrix} \bigg ( I + \begin{bmatrix}
\minus 1 & \gamma_1 \\ 0 & \minus \mu \gamma_2
\end{bmatrix} \bigg ) \varepsilon_i \end{bmatrix}^{\top} \exp \big ( (3c + 3d) A_f^{\top} \big ) P \exp \big ( (3c + 3d) A_f \big ) \begin{bmatrix} \bigg ( I + \begin{bmatrix}
\minus 1 & \gamma_1 \\ 0 & \minus \mu \gamma_2
\end{bmatrix} \bigg ) \varepsilon_i \end{bmatrix} \\
& \hspace{5mm} - \varepsilon_i^{\top} P \varepsilon_i \\
& \hspace{-25mm} = \varepsilon_i \begin{bmatrix}
0 & \gamma_1 \\ 0 & 1 \minus \mu \gamma_2 \end{bmatrix}^{\top} \exp \big ( (3c + 3d) A_f^{\top} \big ) P \exp \big ( (3c + 3d) A_f \big ) \begin{bmatrix}
0 & \gamma_1 \\ 0 & 1 \minus \mu \gamma_2 \end{bmatrix} \varepsilon_i \\
& \hspace{5mm} - \varepsilon_i^{\top} P  \varepsilon \\
& \hspace{-25mm} = \varepsilon_i^{\top} A_g^{\top} \exp \big ( (3c + 3d) A_f^{\top} \big ) P \exp \big ( (3c + 3d) A_f \big ) A_g  \varepsilon_i - \varepsilon_i^{\top} P  \varepsilon_i \\
& \hspace{-25mm} = \varepsilon_i^{\top} \Big ( A_g^{\top} \exp \big ( (3c + 3d) A_f^{\top} \big ) P \exp \big ( (3c + 3d) A_f \big ) A_g  -  P \Big )  \varepsilon_i
\end{align*}
\noindent
where $A_g = \begin{bmatrix} 0 & \gamma_1 \\ 0 &  1 - \mu \gamma_2 \end{bmatrix}$. Then, by continuity of condition (\ref{eqn:lyap_cond_2}),
\begin{equation}
V(\widetilde{G}^i_{\varepsilon_6}) - V(\tilde{x}_{\varepsilon}) \leq -\sigma \varepsilon_i^\top \varepsilon_i
\end{equation}
where $$\sigma \in \bigg ( 0, \lambda_{\rm min} \Big ( A_g^{\top} \exp \big ((3c + 3d) A_f^{\top} \big ) P \exp \big ((3c + 3d) A_f \big ) A_g  -  P \prec 0 \Big ) \bigg )$$
\end{proof}

\begin{theorem}
Let the hybrid system $\widetilde{\HS}_{\varepsilon}$ with constants $d \geq c > 0$ be given. If there exist a constant $\mu > 0$ and positive definite symmetric matrix $P$ such that 
\begin{equation} \label{eqn:lyap_cond_2}
\begin{aligned}
A_g^{\top} \exp \big ( (3c + 3d) A_f^{\top} \big ) P \exp \big ( (3c + 3d) A_f \big ) A_g  -  P \prec 0
\end{aligned}
\end{equation}
is satisfied where $A_{g} = \begin{bmatrix} 0 & \gamma_1 \\ 0 &  1 {\minus} \mu \gamma_2 \end{bmatrix}$ with $\gamma_1 = \frac{3c + 4d}{2}$ and $\gamma_2 = 2c + 2d$, then $\widetilde{\A}_{\varepsilon}$ is globally attractive for $\widetilde{\HS}_{\varepsilon}$. 
\end{theorem}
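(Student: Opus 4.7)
The plan is to mirror the argument used for the two-agent Theorem \ref{thm}, lifting it from the single error $\varepsilon$ to the stacked error vector $\varepsilon=(\varepsilon_1,\ldots,\varepsilon_N)$, and exploiting the cyclic structure imposed by the index variable $\ell$. First, I would pick an arbitrary maximal solution $\phi$ to $\widetilde{\HS}_{\varepsilon}$ and apply Lemma \ref{lem:finite_M_2} to obtain a hybrid time $T^{*}$ such that $\phi(t,j)\in\mathcal{M}$ for every $(t,j)\in\dom\phi$ with $t+j\ge T^{*}$; from this point on, the memory buffers are correctly populated and the feedback laws $K_{\tilde{o}}$, $K_a$ realize the corrections analyzed in Lemma \ref{lem:V2_jumps}. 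Completeness of $\phi$ would follow by the same argument as in Lemma \ref{lem:hy_comp}, since $\widetilde{F}_{\varepsilon}$ is Lipschitz on $\widetilde{C}_{\varepsilon}$ and $\widetilde{G}_{\varepsilon}(\widetilde{D}_{\varepsilon})\subset\widetilde{C}_{\varepsilon}\cup\widetilde{D}_{\varepsilon}$.

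Next, I would evaluate the Lyapunov function $\widetilde{V}$ in (\ref{eqn:lyap_fun_2}) along $\phi$ for $t+j\ge T^{*}$. By Lemma \ref{lem:V2_flows}, $\widetilde{V}$ is constant during flows, so only jumps can change its value. By Lemma \ref{lem:V2_jumps}, $\widetilde{V}$ is non-increasing across any jump and strictly decreases by at least $\sigma\,\varepsilon_i^{\top}\varepsilon_i$ whenever the jump is of type $\widetilde{G}^{i}_{\varepsilon_{6}}$, corresponding to an update of the $i$-th synchronizing node. Therefore $\widetilde{V}\circ\phi$ is monotonically non-increasing and bounded below by zero, hence convergent.

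The crucial combinatorial step, which I expect to be the main obstacle, is to show that every index $i\in\mathcal{S}$ is visited infinitely often by the $\widetilde{G}^{i}_{\varepsilon_{6}}$ jump, so that each $\varepsilon_i$ actually sees the strict decrease. This is where the construction of $\ell$ and of the auxiliary timers $\tilde{\tau}_\ell$ pays off: within one full execution of the protocol $\ell$ is fixed, $p$ is monotonically driven from $0$ to $5$ via $\widetilde{G}^{\ell}_{\varepsilon_{1}},\ldots,\widetilde{G}^{\ell}_{\varepsilon_{5}}$, and then $\widetilde{G}^{\ell}_{\varepsilon_{6}}$ fires, at which point $\ell$ is incremented (modulo $N$). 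Since each flow interval has finite duration bounded by $d$ (from the dynamics of $\tau$) and the jump chain $p\mapsto p+1$ is uniformly triggered, a uniform upper bound on the hybrid time between two consecutive firings of $\widetilde{G}^{i}_{\varepsilon_{6}}$ for the same index $i$ can be established, namely $N(3c+3d)$ units of ordinary time and $6N$ jumps.

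Concluding, along any maximal solution with $t+j\ge T^{*}$ the quantity $\widetilde{V}(\phi(t,j))$ decreases strictly by at least $\sigma\,\varepsilon_i^{\top}\varepsilon_i$ at every firing of $\widetilde{G}^{i}_{\varepsilon_{6}}$, and each index $i\in\mathcal{S}$ is firing periodically in the hybrid time. Convergence of $\widetilde{V}\circ\phi$ then forces $\varepsilon_i\to 0$ for every $i\in\mathcal{S}$, and invoking the quadratic sandwich bound $\alpha_1|\tilde{x}_{\varepsilon}|^{2}_{\widetilde{\A}_{\varepsilon}}\le\widetilde{V}(\tilde{x}_{\varepsilon})\le\alpha_2|\tilde{x}_{\varepsilon}|^{2}_{\widetilde{\A}_{\varepsilon}}$ yields $|\phi(t,j)|_{\widetilde{\A}_{\varepsilon}}\to 0$ as $t+j\to\infty$. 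Quantitative decay estimates analogous to (\ref{eqn:bound_phi}) can be obtained by counting, for each index $i$, the number of $\widetilde{G}^{i}_{\varepsilon_{6}}$-jumps occurring in $(0,t+j]$ and iterating the contraction factor $\bigl|1-\sigma/\alpha_2\bigr|$; the main care is that now the contraction on the $i$-th block happens once every $N$ cycles, so the effective geometric rate is weakened by a factor depending on $N$.
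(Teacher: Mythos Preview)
Your proposal is correct and follows essentially the same approach as the paper's proof: both use the Lyapunov function $\widetilde{V}$ in (\ref{eqn:lyap_fun_2}), invoke Lemmas \ref{lem:V2_flows} and \ref{lem:V2_jumps} to establish that $\widetilde{V}$ is constant along flows and non-increasing across jumps with strict decrease at each $\widetilde{G}^{i}_{\varepsilon_6}$ jump, and then exploit the cyclic dynamics of the index variable $\ell$ to guarantee that every node is updated infinitely often. Your version is in fact somewhat more careful than the paper's, since you explicitly invoke Lemma \ref{lem:finite_M_2} to handle initial conditions outside $\mathcal{M}$ (needed for \emph{global} attractivity), argue completeness, and give a uniform bound $N(3c+3d)$ on the time between consecutive updates of the same index, whereas the paper's proof restricts to initial conditions already in $\mathcal{M}$ and leaves these points implicit.
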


\begin{proof}
Pick a maximal solution $\phi \in \mathcal{S}_{\widetilde{\HS}_{\varepsilon}}$ with initial condition $\phi \in \big ( \widetilde{C}_{\varepsilon} \cup \widetilde{D}_{\varepsilon} \big ) \cap \mathcal{M}$ and consider the function $V$ in (\ref{eqn:lyap_fun_2}). Recall the dynamics of $V$ from Lemmas  \ref{lem:V2_flows} and \ref{lem:V2_jumps}  for $\tilde{x}_{\varepsilon} \in \widetilde{C}_{\varepsilon} \cup \widetilde{D}_{\varepsilon}$,
\begin{align*}
\langle \nabla \widetilde{V}(\tilde{x}_{\varepsilon}), \widetilde{F}_{\varepsilon} (\tilde{x}_{\varepsilon}) \rangle = 0
\end{align*}
\noindent
and
\begin{align*}
\widetilde{V}(g) - \widetilde{V}(\tilde{x}_{\varepsilon}) \leq \begin{cases} 0 & \mbox{\rm if } g \in \widetilde{G}_{\varepsilon}(\tilde{x}_{\varepsilon}) \setminus \widetilde{G}_6^i(\tilde{x}_{\varepsilon} ), i \in \nodes \\
\minus \sigma_i \varepsilon_i^{\top} \varepsilon_i & \mbox{\rm if } g = \widetilde{G}_6^i(\tilde{x}_{\varepsilon} ), i \in \nodes
\end{cases}
\end{align*}
\noindent
Observe that $V$ remains constant during flows and following any jump $g \in \widetilde{G}_{\varepsilon}(\tilde{x}_{\varepsilon}) \setminus \widetilde{G}_6^i(\tilde{x}_{\varepsilon} ), i \in \nodes$. It is only following jumps according to $g = \widetilde{G}_6^i(\tilde{x}_{\varepsilon} ), i \in \nodes$ that $V$ observes a decrease. 

Now, by noting the dynamics of the variable $s$, namely that it increments by one following each jump $g = \widetilde{G}_6^i(\tilde{x}_{\varepsilon} ), i \in \nodes \setminus \{n \minus 1 \}$ and resets to one following a jump according to $g = \widetilde{G}_6^{n \minus 1}(\tilde{x}_{\varepsilon} )$, we have that the trajectory of $\phi_{s}$ is cyclic. Moreover, we have that the algorithm iterates through each node $i \in \nodes$ applying the corrections $K_{\tilde{o}}(p, \m)$, $K_{a}(\m, \tau_{k})$ to $\varepsilon_i$. Therefore, due to the deterministic nature of the timers $\tau^*$ and $\tilde{\tau}$ that govern the flow and jumps of the system, there exists a hybrid time $(t,j)$ such that $t + j \geq T^* \geq 0$ where the algorithm has applied the corrections $K_{\tilde{o}}(p, \m)$, $K_{a}(\m, \tau_{k})$ to $\varepsilon_i$ for each $i \in \nodes$. Then we have that 
\begin{align*}
V \big ( \phi (t,j) \big ) = - \sum_{i = 1}^{n \minus 1} \sigma_i \varepsilon_i^{\top}(t^i,j^i) \varepsilon_i (t^i,j^i) \hspace{5mm} \forall (t,j) \in \{ \mbox{dom } \phi : t + j \geq T^* \}
\end{align*}
\noindent
where $(t^i,j^i) \in \{ \mbox{dom } \phi : \phi(t,j) \in \widetilde{D}_{\varepsilon_6}^i, i \in \nodes \}$ denotes the time at which the corrections $K_{\tilde{o}}(p, \m)$, $K_{a}(\m, \tau_{k})$ are applied to $\varepsilon_{i}$. Then, taking the limit of $V \big ( \phi (t,j) \big )$ we have that
\begin{align*}
\lim_{t + j \to \infty} V \big ( \phi (t,j) \big ) = 0
\end{align*}
\noindent
allowing us to conclude attractivity to the set $\widetilde{\A}_{\varepsilon}$ for $\widetilde{\HS}_{\varepsilon}$ and complete the proof.
\end{proof}

\section{Numerical Results} \label{sec:num}

\subsection{Two-agent system}

\subsubsection{Nominal Setting}

\ifbool{conf}{}{In this first example, we present a numerical simulation of the two-agent system for the nominal setting that validates our theoretical results, namely we show that with the conditions in (\ref{eqn:lyap_cond}) satisfied, the trajectories of the simulation converge to the desired set.}

\ifbool{conf}{
Consider Nodes $i$ and $k$ with dynamics as in (\ref{eqn:clocks_dyn}) with data $a_i = 1$, $a_k = 0.8$ and $c = d=0.5$ to the system $\HS$. Setting $\mu = 0.25$, condition (\ref{eqn:lyap_cond}) is satisfied with $P = \begin{bmatrix}
5.429 & -0.134 \\ -0.134 & 35.010
\end{bmatrix}$. Simulating the system, Figure \ref{fig:num_ex1} shows the trajectories of the error in the clocks and error in the clock rates of Nodes $i$ and $k$ along with a plot of  $V$ evaluated along the solution. Notice, that $V$ converges to zero asymptotically following several periodic executions of the algorithm.  \footnote{Code at github.com/HybridSystemsLab/HybridSenRecClockSync}
}{
\begin{example}
Consider Nodes $i$ and $k$ with dynamics as in (\ref{eqn:clocks_dyn}) with data $a_i = 1$, $a_k = 1.8$ and $c = 0.1$, $d=0.2$ to the system $\HS$. Setting $\mu = 0.833$, condition (\ref{eqn:lyap_cond}) is satisfied with $P = \begin{bmatrix}
6.2594 &   -0.5219 \\
   -0.5219 &  11.4302
\end{bmatrix}$. Simulating the system, Figure \ref{fig:num_ex1} shows the trajectories of the error in the clocks and error in the clock rates of Nodes $i$ and $k$  for a solution $\phi$ to the system such that $\phi(0,0) \in (C \cup D) \cap \mathcal{M}$.  Figure \ref{fig:num_ex1} also shows the plot of  $V$ evaluated along the solution. Notice, that $V$ converges to zero asymptotically following several periodic executions of the algorithm. Observe that the behavior of clock error is more stable than the conventional sender-receiver algorithm simulated in Figure \ref{fig:ex1}.  \footnote{Code at github.com/HybridSystemsLab/HybridSenRecClockSync}
\end{example}}

\begin{figure}[h]
\centering
\vspace{-3mm}
\subfigure[\label{fig:num_ex1a}]{\includegraphics[trim={0mm 0mm 0mm 0mm}, width=0.4\textwidth]{./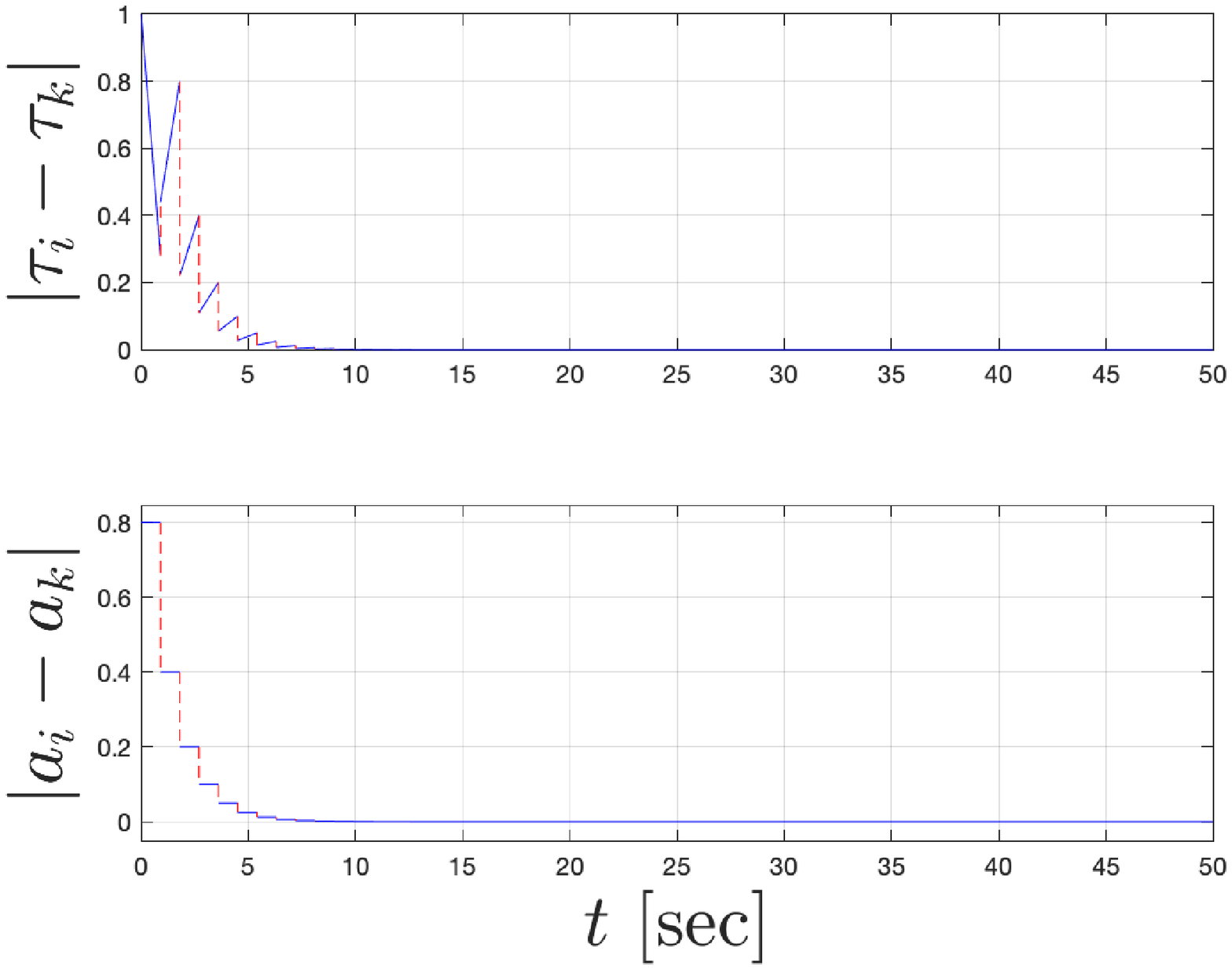}}
\vspace{-3mm}
\subfigure[\label{fig:num_ex1b}]{\includegraphics[trim={0mm 0mm 0mm 10mm},width=0.4\textwidth]{./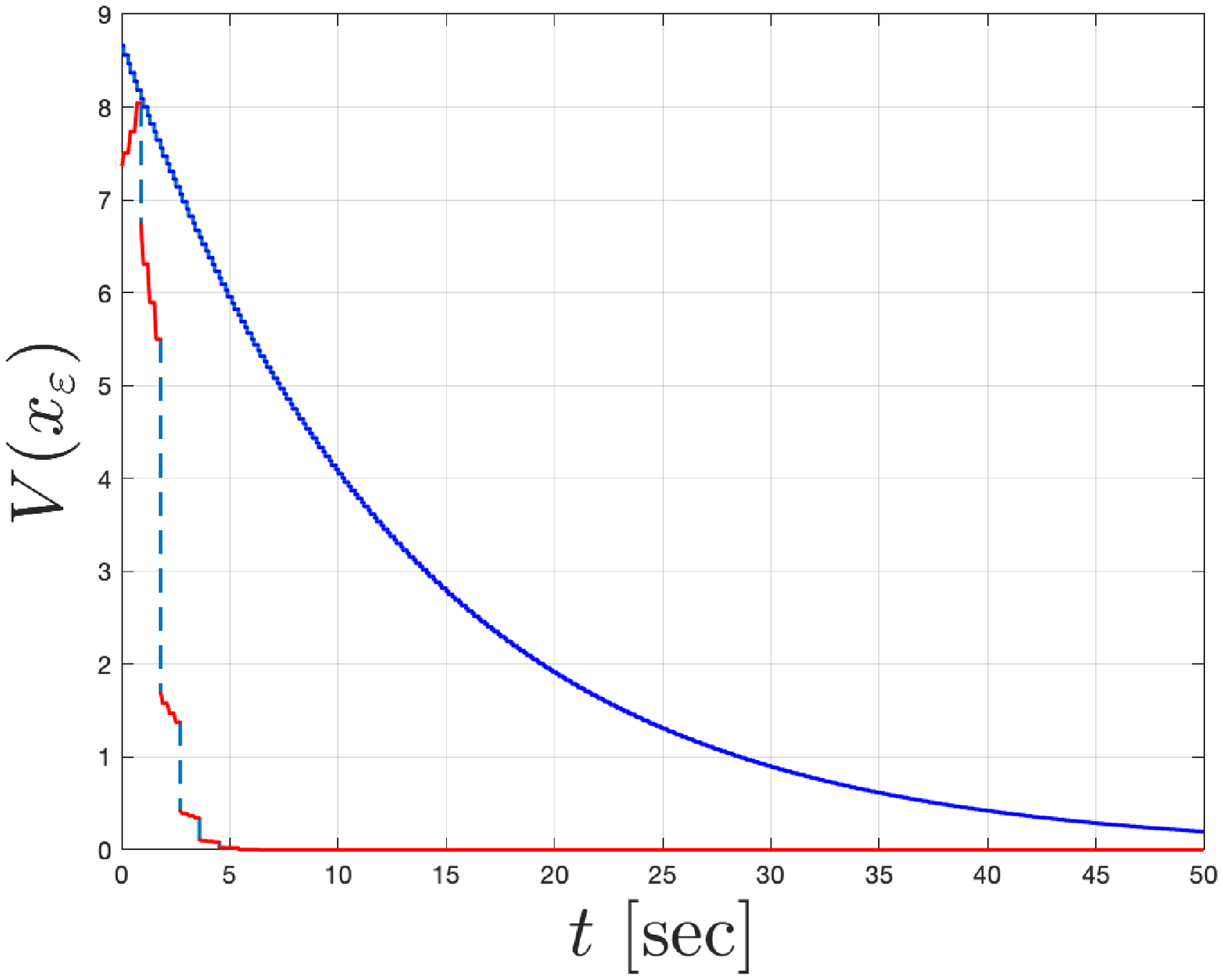}}
\caption{\label{fig:num_ex1} Figure \ref{fig:num_ex1a} gives the evolution of the error in the clocks and clock rates of Nodes $i$ and $k$. Figure \ref{fig:num_ex1b} gives $V$ evaluated along the solution.}
\end{figure}

\subsubsection{Variable propagation delay due to communication noise}

In the next example, we simulate the case of noise in the communication channel that contributes to a variable propagation delay $d$. Noise in the communication channel makes the propagation delay between nodes $i$ and $k$ no longer symmetric.  

\begin{example}
Consider the same clock dynamics from the previous example, i.e., $a_i = 1.1$, $a_k = 0.75$, with  $\mu = 0.3571$ and condition (\ref{eqn:lyap_cond}) satisfied for $c = 0.2$, $d = 0.5$, and $P = \begin{bmatrix}
5.435 & 1.041 \\ 1.041 & 16.0982
\end{bmatrix}$. Now, with $[d_1, d_2]$ defining the allowed values of $d$ with $d_1 = 0.49$ and $d_2 = 0.51$, we generate variable propagation delay by replacing the dynamics of $\tau$ in (\ref{eqn:timer_dyn}) by
\begin{equation*}
\begin{aligned}
& \dot{\tau} = -1 & & \tau \in [0,d_2] \\
& \tau^+ \in \cup_{d \in [d_1,d_2]} (1-q) d + qc & & \tau = 0
\end{aligned}
\end{equation*}
Figure \ref{fig:ex3} shows a simulation of the trajectories of the error in the clocks and error in the clock rates of Nodes $i$ and $k$. Observe that absolute error in the clocks converges to zero even in the presence of the perturbation after several periodic executions of the algorithm. The error in clock rates is also able to converge sufficiently close to zero but suffers from some observed variability due to the noise. \footnote{Code at github.com/HybridSystemsLab/HybridSenRecClockSync}
\end{example}

\begin{figure}[h]
\centering
\vspace{-3mm}
\subfigure[\label{fig:num_ex3a}]{\includegraphics[trim={0mm 0mm 0mm 0mm}, width=0.4\textwidth]{./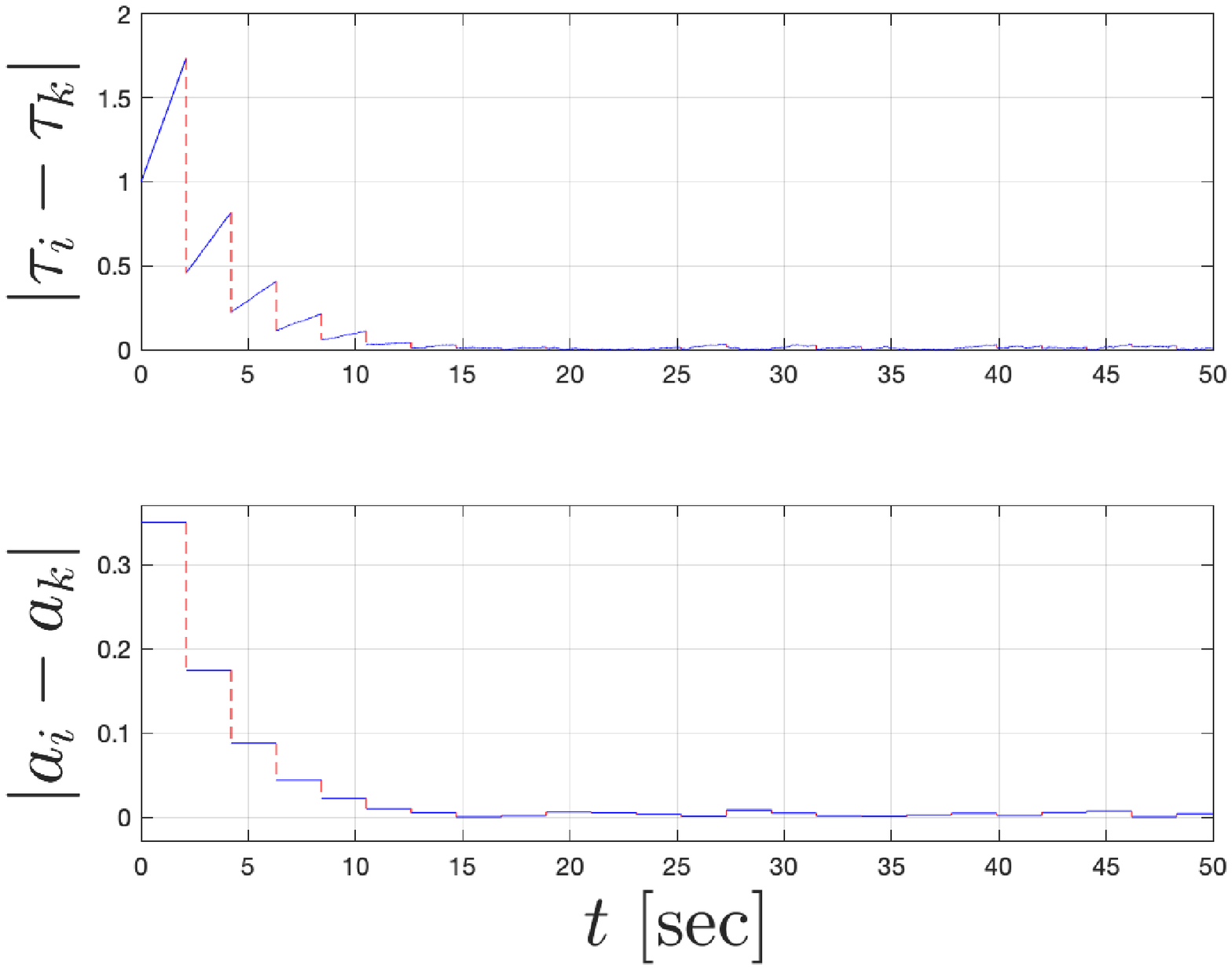}}
\vspace{-3mm}
\subfigure[\label{fig:num_ex3b}]{\includegraphics[trim={0mm 0mm 0mm 10mm},width=0.4\textwidth]{./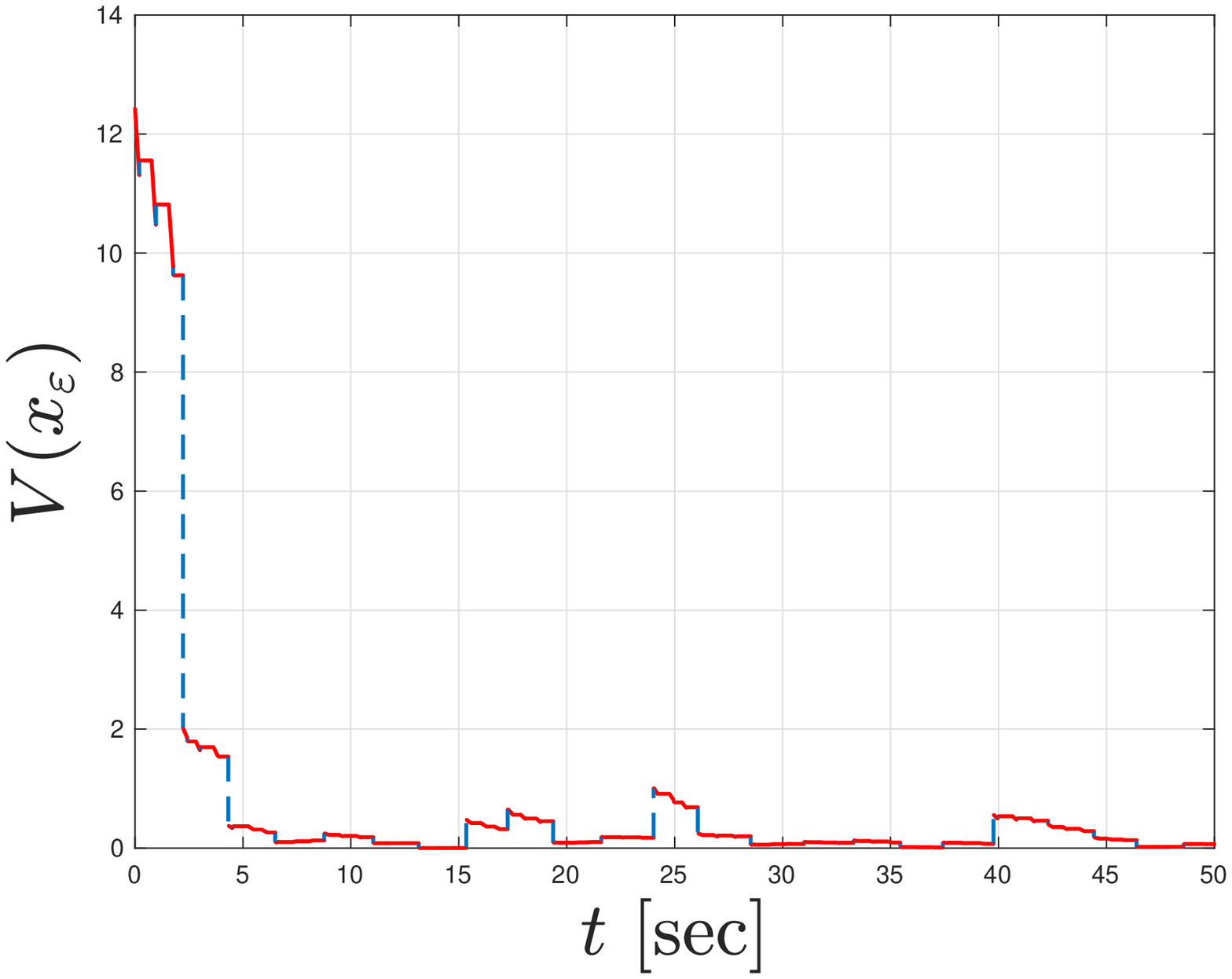}}
\caption{\label{fig:ex3} Figure \ref{fig:num_ex1a} gives the evolution of the error in the clocks and clock rates of Nodes $i$ and $k$ subject to noise on the communication channel. Figure \ref{fig:num_ex1b} gives $V$ evaluated along the solution.}
\end{figure}

\subsubsection{Time-varying clock rates}

In the next example, we consider the common scenario of time-varying clock skews at both nodes $i$ and $k$. This noise is injected at the clock dynamics $\dot{\tau}_i$ and $\dot{\tau}_k$. The system is then simulated with the remaining dynamics left unchanged.

\begin{example}
For $c = 0.2$ and $d = 0.5$, consider nodes $i$ and $k$ with clock dynamics
\begin{align*}
\dot{\tau}_i & = a_i + m_a \\
\dot{\tau}_k & = a_k + m_a 
\end{align*}
where $a_i = 1.1$, $a_k = 0.75$, and $m_a \in (-0.3,0.3)$ is a Gaussian injected noise on the clock dynamics. Letting $\mu = 0.3571$, condition (\ref{eqn:lyap_cond}) is satisfied with $P = \begin{bmatrix}
5.435 & 1.041 \\ 1.041 & 16.0982
\end{bmatrix}$. 
Simulating the system, Figure \ref{fig:ex4} shows the trajectories of the error in the clocks and error in the clock rates of Nodes $i$ and $k$. Again, the system is able to converge after a couple of executions of the algorithm. The error on the clocks observes the most variability due to simulated noise.\footnote{Code at github.com/HybridSystemsLab/HybridSenRecClockSync}
\end{example}

\begin{figure}[h]
\centering
\vspace{-3mm}
\subfigure[\label{fig:num_ex4a}]{\includegraphics[trim={0mm 0mm 0mm 0mm}, width=0.4\textwidth]{./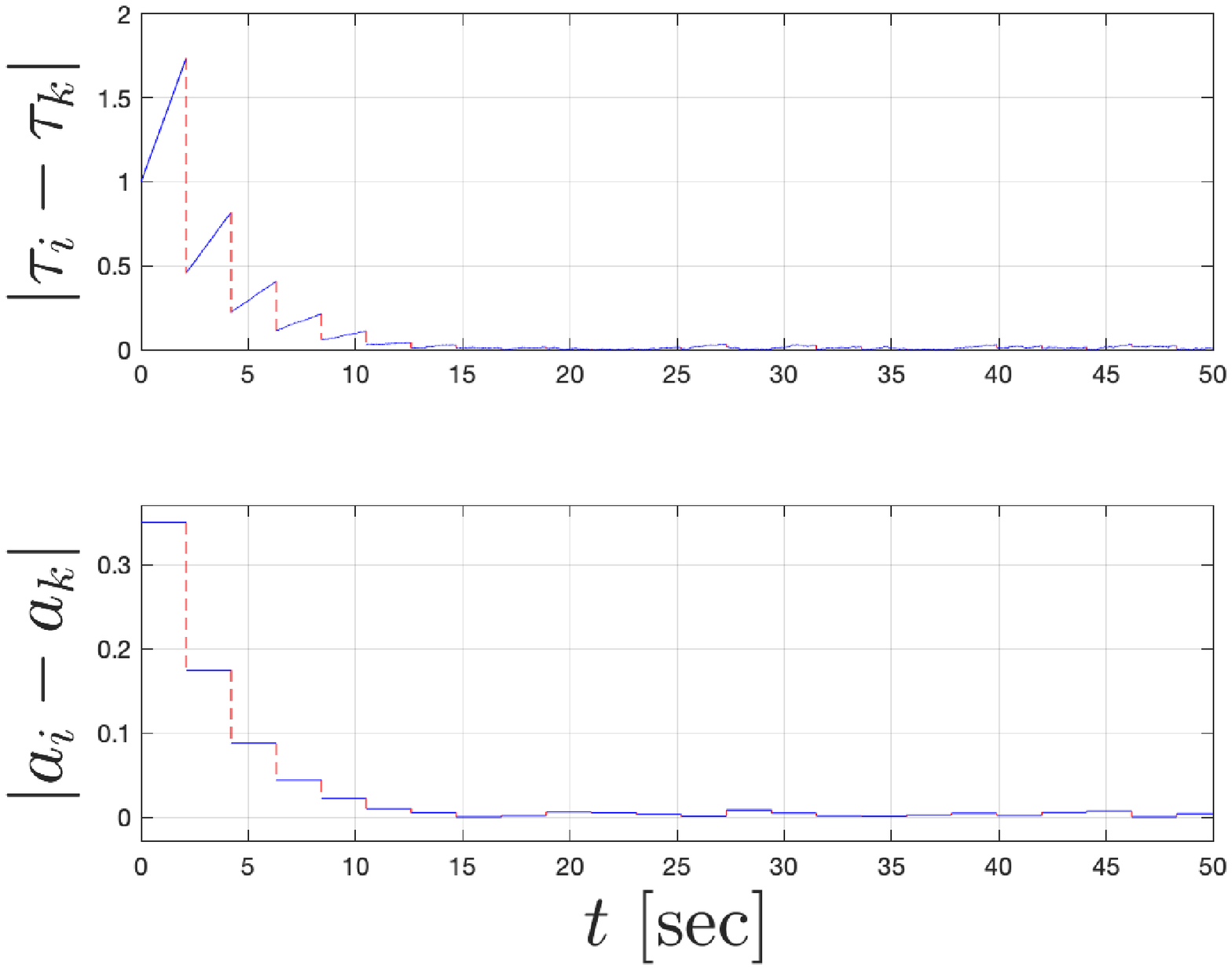}}
\vspace{-3mm}
\subfigure[\label{fig:num_ex4b}]{\includegraphics[trim={0mm 0mm 0mm 10mm},width=0.4\textwidth]{./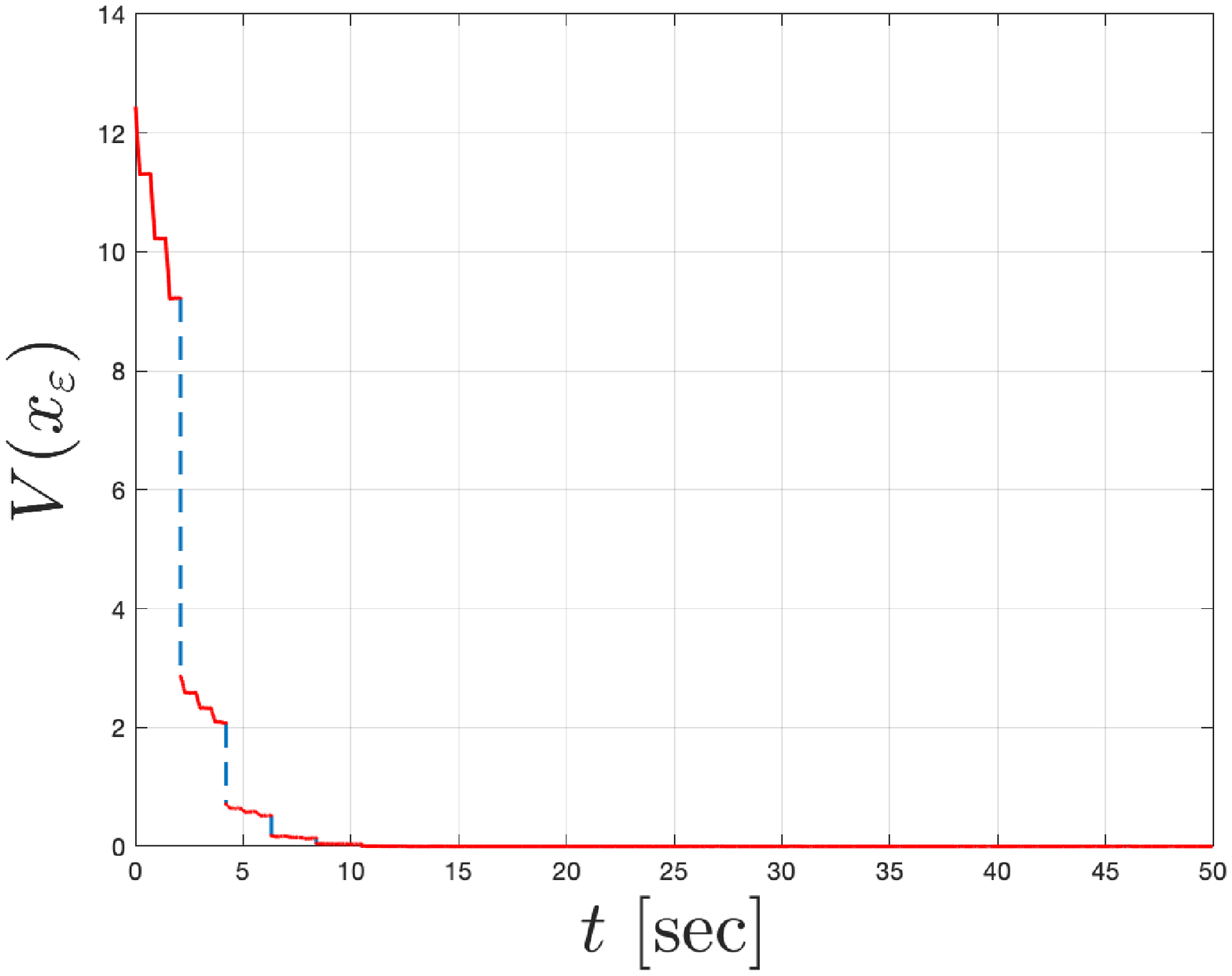}}
\caption{\label{fig:ex4} Figure \ref{fig:num_ex1a} gives the evolution of the error in the clocks and clock rates of Nodes $i$ and $k$ subject to noise $m_a$ on the clock dynamics. Figure \ref{fig:num_ex1b} gives $V$ evaluated along the solution.}
\end{figure}

\vspace{-0mm}

\subsection{Multi-agent model}

In this section we present numerical results for the multi-agent model to validate our theoretical results and draw comparisons with other multi-agent clock synchronization models from the literature.

\begin{example}
Consider a network of three nodes  $\{R, 1, 2 \}$ where $R$ denotes the reference or parent node while nodes $1$ and $2$ denote the synchronizing child nodes. The data of this system is given by $a_R, a_1, a_2 \in [0.5, 1.5 ]$ and  $c = 0.1$, $d = 0.2$ with $\mu = 0.833$. Simulating the multi-agent system $\widetilde{\HS}$, Figure \ref{fig:num_ex4a} shows the trajectories of the error in the clocks and error in the clock rates of Nodes $1$ and $2$ with respect to Node $R$. Note that the errors with respect to each clock converge after several executions of the algorithm on the respective clocks at Nodes $1$ and $2$. \footnote{Code at github.com/HybridSystemsLab/HybridSenRecMultiClockSync}
\end{example}

\begin{figure}[H]
\centering
\includegraphics[width=0.5\textwidth]{./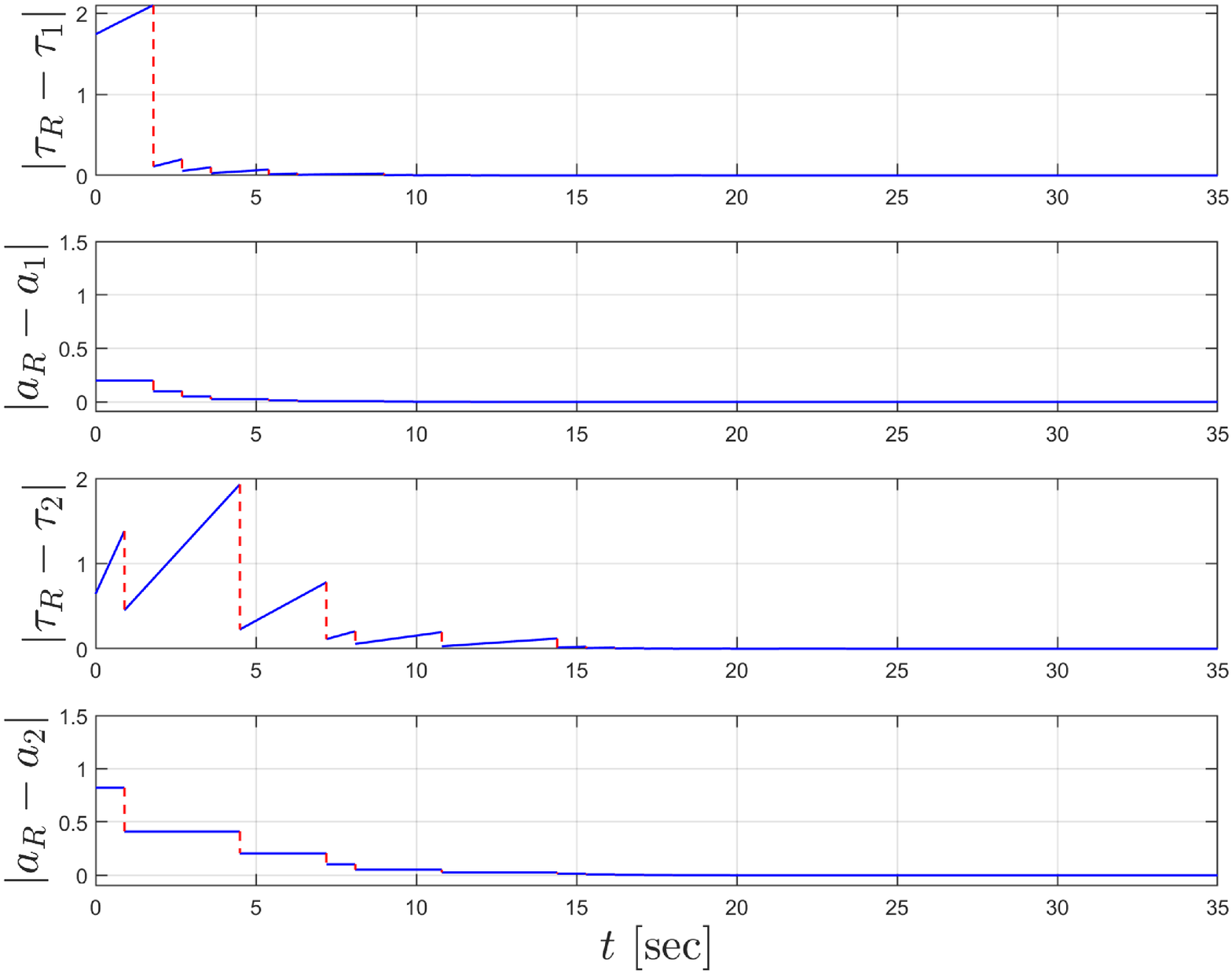}
\caption{\label{fig:ex3}  The evolution of the error in the clocks and clock rates of Nodes $1$ and $2$ with respect to Node $R$.}
\end{figure}


\section{Conclusion}

In this paper, we introduced a sender-receiver clock synchronization algorithm with sufficient design conditions ensuring synchronization. Results were given to show asymptotic attractivity of a set of interest reflecting the desired synchronized setting. Numerical results validating the attractivity of the system to the set of interest were also given. An additional model to capture the multi-agent setting was presented with a numerical example to demonstrate its feasibility. In future work we will study stability of the system and robustness properties to specific perturbations. 

\vspace{-0mm}

\ifbool{conf}{\nocite{guarro}}{
\nocite{*}}
\bibliography{IEEE_PTP}
\bibliographystyle{elsarticle-num}

\end{document}